\renewcommand\a{\alpha}
\newcommand\g{\gamma}
\renewcommand\d{\delta}
\newcommand\la{\lambda}
\newcommand\s{\sigma}
\newcommand\vf{\varphi}
\renewcommand\t{\tau}
\newcommand\Om{\Omega}
\newcommand\w{\omega}
\newcommand\D{\Delta}
\newcommand\vL{\varLambda}
\newcommand\vG{\varGamma}
\newcommand\ve{\varepsilon}
\newcommand\N{\nabla }
\newcommand{\QQ}{\mathbb Q}
\newcommand{\ZZ}{\mathbb Z}
\newcommand{\CC}{\mathbb C}
\newcommand\Bm{\mathbf m}
\newcommand\bx{\mathbf{x}}
\newcommand\by{\mathbf{y}}
\newcommand\CA{\mathcal{A}}
\newcommand\CH{\mathcal{H}}
\newcommand\CO{\mathcal{O}}
\newcommand\CK{\mathcal{K}}
\newcommand\CF{\mathcal{F}}
\newcommand\CT{ \mathcal{T}}
\newcommand\FS{\mathfrak S}
\newcommand\Fu{\mathfrak u}
\newcommand\Fv{\mathfrak v}
\newcommand\Fs{\mathfrak s}
\newcommand\Ft{\mathfrak t}
\newcommand\Fgl{\mathfrak{gl}}
\newcommand\Fsl{\mathfrak{sl}}
\newcommand{\Bs}{\mathbf{s}}
\newcommand{\Bc}{\mathbf{c}}
\newcommand\wh{\widehat}
\newcommand\wt{\widetilde}
\newcommand{\lan}{\langle}
\newcommand{\ran}{\rangle}
\newcommand{\ra}{\rightarrow }
\newcommand{\xra}{\xrightarrow}
\newcommand\Ker{\operatorname{Ker}}
\newcommand\Hom{\operatorname{Hom}}
\newcommand\End{\operatorname{End}}
\newcommand\Ind{\operatorname{Ind}}
\newcommand\iInd{i\operatorname{-Ind}}
\newcommand\res{\operatorname{res}}
\newcommand\Res{\operatorname{Res}}
\newcommand\iRes{i\operatorname{-Res}}
\newcommand\coInd{\operatorname{coInd}}
\newcommand\Id{\operatorname{Id}}
\renewcommand\Im{\operatorname{Im}}
\newcommand\Std{\operatorname{Std}}
\newcommand\Ext{\operatorname{Ext}}
\newcommand{\KZ}{\operatorname{KZ}}
\newcommand{\cmod}{\operatorname{-mod}}
\newcommand{\cproj}{\operatorname{-proj}}
\newcommand{\HRes}{\,^\He \hspace{-1mm} \operatorname{Res}} 
\newcommand{\HInd}{\,^\He \hspace{-1mm} \operatorname{Ind}} 
\newcommand{\ORes}{\, ^{\CO} \hspace{-1mm} \Res}
\newcommand{\OInd}{\, ^{\CO} \hspace{-1mm} \Ind}
\newcommand{\pdim}{\operatorname{pdim}}
\newcommand{\isom}{\,\raise2pt\hbox{$\underrightarrow{\sim}$}\,}
\newcounter{ichi}
\newcommand{\roi}{\roman{ichi}}
\newcounter{ni}
\newcommand{\roii}{\roman{ni}}
\newcounter{san}
\newcommand{\roiii}{\roman{san}}
\newcounter{yon}
\newcommand{\roiv}{\roman{yon}}
\newcounter{go}
\newcommand{\rov}{\roman{go}}
\newcounter{roku}
\newcounter{nana}
\newcommand{\Sc}{\mathscr{S}}
\newcommand{\He}{\mathscr{H}}
\newcommand{\A}{\mathscr{A}}
\newcommand{\B}{\mathscr{B}}
\newcommand{\cas}{\circledast}
\newtheorem{thm}{Theorem}[section]
\newtheorem{lem}[thm]{Lemma}
\newtheorem{cor}[thm]{Corollary}
\newtheorem{prop}[thm]{Proposition}
\def \para{\refstepcounter{thm} \par\medskip\noindent
                \textbf{\thethm .} }
\def \remark{\refstepcounter{thm} \par\medskip\noindent
                \textbf{Remark \thethm .} }
\def \remarks{\refstepcounter{thm} \par\medskip\noindent
                \textbf{Remarks \thethm .} }
\numberwithin{equation}{thm}
\begin{document}
\setlength{\baselineskip}{4.9mm}
\setlength{\abovedisplayskip}{4.5mm}
\setlength{\belowdisplayskip}{4.5mm}
%%%
%%%
\renewcommand{\theenumi}{\roman{enumi}}
\renewcommand{\labelenumi}{(\theenumi)}
\renewcommand{\thefootnote}{\fnsymbol{footnote}}
%%%
\renewcommand{\thefootnote}{\fnsymbol{footnote}}
%\NoBlackBoxes
\parindent=20pt
%%%%%%%%%%%%%%%%%%%%
\newcommand{\dis}{\displaystyle}
%%%%%%%%%%%%%%%%%%%%%%%%%%%%%%%%%%%

\medskip
\begin{center}
{\large \bf Induction and Restriction Functors \\ for Cyclotomic $q$-Schur Algebras} 
\\
\vspace{1cm}
Kentaro Wada\footnote{This research was supported by 
JSPS Grant-in-Aid for Research Activity Start-up No. 22840025, 
and partially supported by 
GCOE \lq Fostering top leaders in mathematics', Kyoto University.}
\\[1em]

{\em Dedicated, with thanks, to Professor Toshiaki Shoji on the occasion of his retirement from
Nagoya University}

\address{Department of Mathematics, Faculty of Science, Shinshu University, 
	Asahi 3-1-1, Matsumoto 390-8621, Japan}
\email{wada@math.shinshu-u.ac.jp}

\end{center}

\title{}
\maketitle 
\markboth{Kentaro Wada}
{Induction and Restriction Functors  for Cyclotomic $q$-Schur Algebras}

%%%%%%%%%%%%%%%%%%%%%%%%%%%%%%%%%%%%%%%%%%%%%%%%%%%%%%%%%%%%%%%%%%%%%%%%%%%%%%%%%%%%%%%%%%%%%%%%%%%%%%%%%%%%%%%%%%%%%

%%%%%%%%%%%%%%%%%%%%%%%%%%%%%%%%%%%%%%%%%%%%%%%%%%%%%%%%%%%%%%%%%%%%%%%%%%%%%%%%%%%%%%%%%%%%%%%%%%%%%%%%%%%%%%%%%%%%%

%%%%%%%%%%%%%%%%%%%%%%%%%%%%%%%%%%%%%%%%%%%%%%%%%%%%%%%%%%%%%%%%%%%%%%%%%%%%%%%%%%%%%%%%%%%%%%%%%%%%%%

\begin{abstract}
We define  the induction and restriction functors for cyclotomic $q$-Schur algebras, 
and study some properties of them. 
As an application, 
we categorify a higher level Fock space 
by using the module categories of cyclotomic $q$-Schur algebras. 
\end{abstract}

%%%%%%%%%%%%%%%%%%%%%%%%%%%%%%%%%%%%%%%%%%%%%%%%%%%%%%%%%%%%%%%%%%%%%%%%%%%%%%%%%%%%%%%%%%%%%%%%%%%%%%

\setcounter{section}{-1}

\section{Introduction} 
Let $\He_{n,r}$ be the Ariki-Koike algebra associated to 
the complex reflection group $\FS_n \ltimes (\ZZ / r \ZZ)^n$ 
over a commutative ring $R$. 
Let 
$\Sc_{n,r}$ be the cyclotomic $q$-Schur algebra associated to $\He_{n,r}$. 
It is known that 
$\Sc_{n,r} \cmod$ is a highest weight cover of $\He_{n,r} \cmod$ 
in the sense of \cite{R} when $R$ is a field.  
In \cite{R}, 
Rouquier proved that 
$\Sc_{n,r} \cmod$ 
is equivalent to the category $\CO$ 
of the rational Cherednik algebra associated to $\FS_n \ltimes (\ZZ / r \ZZ)^n$ 
as the highest weight covers of $\He_{n,r} \cmod$ 
when $R=\CC$ with some special parameters.  

On the other hand, 
in \cite{BE}, Bezrukavnikov and Etingof 
defined the parabolic induction and restriction functors for rational Cherednik algebras. 
By using these functors, 
Shan has categorified a higher level Fock space by using the categories $\CO$ of rational Cherednik algebras 
in \cite{S}. 

In this paper, 
we define the induction and restriction functors for cyclotomic $q$-Schur algebras, 
and study some properties of them. 
In \S1, 
we review some  known results for cyclotomic $q$-Schur algebras. 
In \S2, 
we define the injective homomorphism of algebras 
$\iota : \Sc_{n,r} \ra \Sc_{n+1,r}$. 
This injection carries 
the unit element of $\Sc_{n,r}$ to a certain idempotent $\xi$ of $\Sc_{n+1,r}$. 
Thus, we can regard $\Sc_{n+1,r} \xi$ (resp. $\xi \Sc_{n+1,r}$) 
as an $(\Sc_{n+1,r}, \Sc_{n,r})$-bimodule (resp. $(\Sc_{n,r}, \Sc_{n+1,r})$-bimodule) 
by multiplications through the injection $\iota$.
By using these bimodules,  
we define the restriction functor from $\Sc_{n+1,r} \cmod$ to $\Sc_{n,r} \cmod$ by 
$\Res^{n+1}_n := \Hom_{\Sc_{n+1,r}} ( \Sc_{n+1,r} \xi, ?)$, 
and 
define the induction functors from $\Sc_{n,r} \cmod$ to $\Sc_{n+1,r} \cmod$ 
by 
$\Ind^{n+1}_n := \Sc_{n+1,r} \xi \otimes_{\Sc_{n,r}} ?$ 
and 
$\coInd^{n+1}_n := \Hom_{\Sc_{n,r}} (\xi \Sc_{n+1,r}, ?)$. 
In \S 3, 
we study the standard (Weyl) and costandard  modules of cyclotomic $q$-Schur algebras 
applying the functors $\Res^{n+1}_n$, $\Ind^{n+1}_n$ and $\coInd^{n+1}_n$. 
In Theorem \ref{Thm Res Ind Weyl}, 
we prove that the restricted and induced standard (resp. costandard) modules 
have filtrations whose successive quotients are isomorphic to standard (resp. costandard) modules. 
In \S4, 
we study some properties of our functors. 
In particular, 
we prove the isomorphism of functors 
$\Ind^{n+1}_n \cong \coInd^{n+1}_n$ 
(Theorem \ref{Thm iso Ind coInd} ). 
Then, we see that 
$\Res^{n+1}_n$ is left and right adjoint to $\Ind^{n+1}_n$, 
and both functors are exact. 
Moreover, 
these functors commute with the dual functors and Schur functors 
(Corollary \ref{Cor properties Res Ind}). 
In \S5, 
by using the projections to blocks of cyclotomic $q$-Schur algebras, 
we refine the induction and restriction functors. 
As an application, 
we categorify a level $r$ Fock space by using $\bigoplus_{n \geq 0} \Sc_{n,r} \cmod$ 
with (refined) induction and restriction functors 
(Corollary \ref{Cor categorification of Fock})
\footnote{Recently, in \cite{SW}, 
Stroppel and Webster gave a categorification of a Fock space by using a quiver Schur algebra, 
but our methods are totally different from theirs.}. 
In \S6, 
we prove that 
our induction and restriction functors are isomorphic to 
the corresponding parabolic induction and restriction functors for rational Cherednik algebras 
given in \cite{BE} 
when module categories of cyclotomic $q$-Schur algebras 
are equivalent to 
categories $\CO$ of rational Cherednik algebras 
as highest weight covers of module categories of Ariki-Koike algebras 
(Theorem \ref{Thm iso Res Ind SCA RCA}).

\quad

\textit{Notation and conventions:} 
For an algebra $\A$ over a commutative ring $R$,  
let $\A \cmod$ be  the category of finitely generated left $\A$-modules,  
and $K_0(\A\cmod)$ be the Grothendieck group of $\A \cmod$. 
For $M \in \A \cmod$, 
we denote by $[M]$ the image of $M$ in $K_0(\A \cmod)$.   

Let $\theta : \A \ra \A$ be an algebra anti-automorphism. 
For a left $\A$-module $M$, 
put $M^\cas = \Hom_R(M,R)$, 
and we define the left action of $\A$ on $M^\cas$ 
by 
$(a \cdot \vf) (m) = \vf ( \theta(a) \cdot m)$ for $a \in \A$, $\vf \in M^\cas$, $m \in M$. 
Then we have the contravariant functor 
$\cas : \A \cmod \ra \A \cmod$ such that $M \mapsto M^\cas$. 
Throughout this paper, 
we use the same symbol $\cas$ for contravariant functors defined in the above   
associated with several algebras  
since there is no risk to confuse.

%%%%%%%%%%%%%%%%%%%%%%%%%%%%%%%%%%%%%%%%%%%%%%%%%%%%%%%%%%%%%%%%%%%%%%%%%%%%%%%%%%

%%%%%%%%%%%%%%%%%%%%%%%%%%%%%%%%%%%%%%%%%%%%%%%%%%%%%%%%%%%%%%%%%%%%%%%%%%%%%%%%%%

\section{Review of cyclotomic $q$-Schur algebras} 
In this section, 
we recall the definition and some fundamental properties of the cyclotomic $q$-Schur algebra 
$\Sc_{n,r}$ 
introduced in \cite{DJM98}, 
and we review a presentation of $\Sc_{n,r}$ 
by generators and defining relations given in \cite{W}.

\para 
Let $R$ be a commutative ring,  
and 
we take parameters 
$q,Q_1,\dots, Q_r \in R$ 
such that 
$q$ is invertible in $R$. 
The Ariki-Koike algebra $_R \He_{n,r}$ associated to 
the complex reflection group  
$\FS_n \ltimes (\ZZ/r\ZZ)^n$ 
is the associative algebra with  1 over $R$ 
generated by 
$T_0,T_1,\dots,T_{n-1}$ 
with the following defining relations: 
\begin{align*}
&(T_0-Q_1)(T_0-Q_2)\dots (T_0-Q_r)=0, \\
&(T_i-q)(T_i+q^{-1})=0 &&(1 \leq i \leq n-1),\\
&T_0 T_1 T_0 T_1=T_1 T_0 T_1 T_0, \\
&T_i T_{i+1} T_i = T_{i+1} T_i T_{i+1} &&(1\leq i \leq n-2),\\
&T_i T_j=T_j T_i &&(|i-j|\geq 2). 
\end{align*}

The subalgebra of 
$_R \He_{n,r}$ 
generated by 
$T_1,\dots,T_{n-1}$ 
is isomorphic to 
the Iwahori-Hecke algebra 
$_R \He_n$ 
of the symmetric group  $\FS_n$ of degree $n$.  
For 
$w \in \FS_n$, 
we denote by 
$\ell(w)$ the length of $w$, 
and denote by 
$T_w$  the standard basis of $_R \He_n$ corresponding to $w$. 
Let 
$\ast : \, _R \He_{n,r} \ra \, _R \He_{n,r}$ 
($h \mapsto h^\ast$) 
be the anti-isomorphism given by 
$T_i^\ast = T_i$ 
for $i=0,1, \dots , n-1$.

\para 
Let $\Bm =(m_1,\dots, m_r) \in \ZZ_{>0}^r$ 
be an $r$-tuple of positive integers such that 
$m_k \geq n$ for any $k=1,\dots,r$. 
Put
\[ \vL_{n,r} (\Bm) =\left\{ \mu=(\mu^{(1)},\dots,\mu^{(r)}) \Biggm| 
	\begin{matrix}
		\mu^{(k)}=(\mu_1^{(k)},\dots,\mu_{m_k}^{(k)}) \in \ZZ_{\geq 0}^{m_k} \\
	    \sum_{k=1}^r \sum_{i=1}^{m_k} \mu_i^{(k)}=n 
	\end{matrix}	
	\right\}. 
\]
We denote by $|\mu^{(k)}|= \sum_{i=1}^{m_k} \mu_i^{(k)}$ (resp. $|\mu|=\sum_{k=1}^r |\mu^{(k)}|$) 
the size of $\mu^{(k)}$ (resp. the size of $\mu$), 
and call an element of $\vL_{n,r}(\Bm)$ an $r$-composition of size $n$. 
Put 
\[
\vL_{n,r}^+  = \{  \la \in \vL_{n,r}(\Bm)  \,|\, \la_1^{(k)} \geq \la_2^{(k)} \geq \dots \geq \la_{m_k}^{(k)} 
	\text{ for any } k=1,\dots,r \}.
\]
Then 
$\vL_{n,r}^+$ 
is the set of $r$-partitions of size $n$.

\para 
\label{Definition M^mu}
For $i=1,\dots,n$, 
put 
$L_1=T_0$ 
and  
$L_i=T_{i-1} L_{i-1} T_{i-1}$. 
For $\mu \in \vL_{n,r}(\Bm)$, 
put 
\[
m_\mu = \Big( \sum_{w \in \FS_\mu} q ^{\ell (w)} T_w \Big) \Big( \prod_{k=1}^r \prod_{i=1}^{a_k}(L_i - Q_k) \Big), 
\quad 
M^\mu = m_\mu \cdot \,_R \He_{n,r}, 
\] 
where 
$\FS_\mu$ is the Young subgroup of $\FS_n$ with respect to $\mu$, 
and 
$a_k=\sum_{j=1}^{k-1}|\mu^{(j)}|$ 
with 
$a_1=0$.  
The cyclotomic $q$-Schur algebra 
$_R \Sc_{n,r}$ 
associated to 
$_R \He_{n,r}$ 
is defined by 
\[ _
R \Sc_{n,r} 
= 
\,_R \Sc_{n,r} (\vL_{n,r}(\Bm)) 
= 
\End_{_R \He_{n,r}} \Big( \bigoplus_{\mu \in \vL_{n,r}(\Bm) } M^\mu \Big).
\]

\remark 
\label{Remark Morita equivalent} 
Let $\wt{\Bm} = (\wt{m}_1,\dots, \wt{m}_r) \in \ZZ^r_{>0}$ be 
such that 
$\wt{m}_k \geq n$ 
for any $k=1,\dots,r$.
Then 
it is known that 
$_R \Sc_{n,r}(\vL_{n,r}(\Bm))$ 
is Morita equivalent to 
$_R \Sc_{n,r}(\vL_{n,r}(\wt{\Bm}))$ 
when 
$R$ is a field.

\para 
In order to describe a presentation of $_R \Sc_{n,r}$,  
we prepare some notation. 

Put 
$m= \sum_{k=1}^r m_k$,  
and  let 
$P=\bigoplus_{i=1}^m \ZZ \ve_i$ 
be the weight lattice of $\Fgl_m$.
Set $\a_i=\ve_i - \ve_{i+1}$ for $i=1,\dots,m-1$, 
then 
$\Pi=\{\a_i\,|\, 1\leq i \leq m-1\}$ 
is the set of simple roots, 
and 
$Q=\bigoplus_{i=1}^{m-1} \ZZ\, \a_i$ 
is the root lattice of $\Fgl_m$. 
Put 
$Q^+ = \bigoplus_{i=1}^{m-1} \ZZ_{\geq 0}\, \a_i$. 
We define a partial order 
\lq\lq \,$ \geq$ "
on $P$, 
so called dominance order, 
by 
$\la \geq \mu $ if $\la - \mu \in Q^+$. 

Put 
$\vG(\Bm) = \{(i,k) \,|\, 1 \leq i \leq m_k, \, 1 \leq k \leq r\}$, 
and 
$\vG'(\Bm) = \vG(\Bm) \setminus \{(m_r , r)\}$. 
We  identify the set 
$\vG (\Bm) $  
with the set  
$\{1,\dots, m\}$
by the bijection 
\[
 \vG(\Bm) \ra \{1,\dots, m\} 
\text{ such that } 
(i,k) \mapsto \sum_{j=1}^{k-1} m_j + i.
\] 
Under this identification, 
we have 
\begin{align*}
&
P=\bigoplus_{i=1}^m \ZZ \ve_i = \bigoplus_{(i,k) \in \vG(\Bm)} \ZZ \ve_{(i,k)}, 
\\
&
Q=\bigoplus_{i=1}^{m-1} \ZZ\, \a_i = \bigoplus_{(i,k) \in \vG'(\Bm)} \ZZ \, \a_{(i,k)}.  
\end{align*}
Then 
we  regard 
$\vL_{n,r}(\Bm)$ 
as a subset of $P$ 
by the injective map 
\[ 
\vL_{n,r}(\Bm) 
\ra 
P 
\text{ such that } 
\la \mapsto \sum_{(i,k) \in \vG(\Bm)} \la_i^{(k)} \ve_{(i,k)}.
\] 
For convenience, 
we consider  $(m_k +1,k) = (1, k+1)$ for $(m_k,k) \in \vG'(\Bm)$ 
(resp. $(1-1,k)=(m_{k-1}, k-1)$ for  $(1, k) \in \vG(\Bm) \setminus \{(1,1)\}$).

For $i=1,\dots, n-1$, 
let 
$s_i=(i, i+1) \in \FS_n$ 
be the adjacent transposition. 
For $\mu \in \vL_{n,r}(\Bm)$ and $(i,k) \in \vG'(\Bm)$, 
put  
\begin{align*}
&
X_{\mu}^{\mu + \a_{(i,k)}} 
=  
\{ 1, s_{N}, s_{N} s_{N-1}, \dots, s_{N}s_{N-1} \dots s_{N - \mu_{i}^{(k)} +1} \}, 
\\
&
X_\mu^{\mu - \a_{(i,k)}} 
= 
\{ 1, s_{N}, s_{N} s_{N+1}, \dots, s_{N} s_{N+1} \dots s_{N + \mu_{i+1}^{(k)} -1} \},
\end{align*}
where 
$N= \sum_{l=1}^{k-1}|\mu^{(l)}| + \sum_{j=1}^i \mu_j^{(k)}$, 
and put 
$\mu_{m_k +1}^{(k)} = \mu_1^{(k+1)}$ if $i = m_k$.

For 
$(i,k) \in \vG'(\Bm)$, 
we define the elements 
$E_{(i,k)}, F_{(i,k)} \in \, _R \Sc_{n,r}$ by 
\begin{align}
\label{description E(i,k)}
&
E_{(i,k)} (m_\mu \cdot h) 
= 
\begin{cases} 
\dis 
q^{- \mu_{i+1}^{(k)} +1}  \Big( \sum_{ x \in X_{\mu}^{\mu + \a_{(i,k)}}} q^{\ell(x)} T_x^\ast \Big) 
h^\mu_{+(i,k)} m_\mu  
\cdot h 
& \text{ if } \mu + \a_{(i,k)} \in \vL_{n,r}(\Bm),
\\
0 
& \text{ if } \mu + \a_{(i,k)} \not\in \vL_{n,r}(\Bm),
\end{cases}
\\
\label{description F(i,k)}
&
F_{(i,k)} (m_\mu \cdot h) 
= 
\begin{cases} 
\dis 
q^{- \mu_{i}^{(k)} +1} 
	\Big( \sum_{ y \in X_{\mu}^{\mu - \a_{(i,k)}}} q^{\ell(y)} T_y^\ast \Big)
m_\mu  
\cdot h 
& \text{ if } \mu - \a_{(i,k)} \in \vL_{n,r}(\Bm), 
\\
0 
& \text{ if } \mu - \a_{(i,k)} \not\in \vL_{n,r}(\Bm) 
\end{cases}
\end{align}
for 
$\mu \in \vL_{n,r}(\Bm)$ and  
$h \in \, _R \He_{n,r}$, 
where 
$h^\mu_{+(i,k)} = 
\begin{cases} 
1 & (i \not= m_k),
\\
L_{N+1} - Q_{k+1} 
& ( i= m_k).
\end{cases} 
$

For 
$\la \in \vL_{n,r}(\Bm)$, 
we define the element $1_\la \in \, _R \Sc_{n,r}$ by 
\[ 
1_\la (m_\mu \cdot h) 
= 
\d_{\la,\mu} m_\la \cdot h 
\]
for 
$\mu \in \vL_{n,r}(\Bm)$ and  
$h \in \, _R \He_{n,r}$. 
From this definition, 
we see that 
$\{ 1_\la \,|\, \la \in \vL_{n,r}(\Bm)\}$ 
is a set of pairwise orthogonal idempotents, 
and 
we have 
$1 = \sum_{\la \in \vL_{n,r}(\Bm)} 1_\la$.

For 
$\la \in \vL_{n,r}(\Bm)$ 
and 
$(i,k) \in\nobreak \vG(\Bm)$, 
we define 
$\s_{(i,k)}^\la \in \, _R \Sc_{n,r}$ by 
\[ 
\s_{(i,k)}^\la ( m_\mu \cdot h) 
= \begin{cases} 
\d_{\la,\mu} \big( m_\la (L_{N+1} + L_{N+2} + \dots + L_{N+\la_i^{(k)}}) \big) \cdot h 
& \text{ if } \la_i^{(k)} \not=0, 
\\
0
& \text{ if } \la_i^{(k)} =0
\end{cases}
\] 
for $\mu \in \vL_{n,r}(\Bm)$ and $ h \in \, _R \He_{n,r}$, 
where 
$N=\sum_{l=1}^{k-1} | \la^{(l)}| + \sum_{j=1}^{i-1} \la_j^{(k)}$.
For $(i,k) \in\nobreak \vG(\Bm)$, 
put 
\[
\s_{(i,k)} = \sum_{\la \in \vL_{n,r}(\Bm)} \s_{(i,k)}^\la,
\] 
then 
$\s_{(i,k)} $ 
is a Jucys-Murphy element 
of $_R \Sc_{n,r}$ 
(See \cite{Mat08} for properties of Jucys-Murphy elements).

\para 
We need some non-commutative polynomials to described a presentation of $_R \Sc_{n,r}$ 
as follows. 
Put  
$\CA = \ZZ [ q,q^{-1}, Q_1,\dots, Q_r]$, 
where 
$q,Q_1,\dots,Q_r$ are indeterminate  over $\ZZ$, 
and let 
$\CK = \QQ (q, Q_1,\dots,Q_r)$ 
be the quotient field of $\CA$.

Let 
$\CK\lan \bx \ran $ 
(resp. $\CK \lan \by \ran$) 
be the non-commutative polynomial ring over $\CK$ 
with indeterminate variables 
$\bx = \{ x_{(i,k)} \,|\, (i,k) \in \vG'(\Bm)\}$ 
(resp. $\by = \{ y_{(i,k)} \,|\, (i,k) \in\nobreak \vG'(\Bm)\}$). 
For 
$g(\bx) \in \CK\lan \bx \ran$ 
(resp. $g(\by) \in \CK\lan \by \ran$), 
let 
$g(F)$ (resp. $g(E)$) 
be the element of $_\CK \Sc_{n,r}$ 
obtained by replacing 
$x_{(i,k)}$ (resp. $y_{(i,k)}$) with $E_{(i,k)}$ (resp. $F_{(i,k)}$). 
Moreover, 
for
 $g(\bx,\by) = \sum_j r_j g_j^-(\bx) \otimes g_j^+(\by) 
	\in \CK\lan \bx \ran \otimes_{\CK} \CK \lan \by \ran$ 
($r_j \in \CK$), put 
\[ 
g(F,E) = \sum_{j} r_j g_j^-(F) g_j^+(E) \in \, _\CK \Sc_{n,r}.  
\]
Then we have the following lemma. 
\begin{lem}[{\cite[Lemma 7.2]{W}}]\
\label{Lemma polynomial JM} 
For $\la \in \vL_{n,r}(\Bm)$ and $(i,k) \in \vG(\Bm)$, 
there exists a (non-commutative) polynomial 
$g^{\la}_{(i,k)} (\bx, \by) \in \CK \lan \bx \ran \otimes_{\CK} \CK \lan \by \ran$ 
such that 
\begin{align}
\label{polynomial JM} 
\s^\la_{(i,k)} = g^\la_{(i,k)} (F,E) 1_\la 
\end{align}
in $_\CK \Sc_{n,r}$. 
\end{lem}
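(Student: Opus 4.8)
The plan is to show that the Jucys–Murphy element $\s^\la_{(i,k)}$, which is a sum of "polynomials in the $L_j$'s" acting diagonally on the weight space $M^\la$, can be re-expressed in terms of the divided-power-type operators $E_{(i,k)}$ and $F_{(i,k)}$. The natural strategy is induction on $k$ and then on $i$ (following the total order on $\vG(\Bm)$), reducing each $\s^\la_{(i,k)}$ to previously treated ones plus correction terms built from the $E$'s and $F$'s. First I would record the commutation relations between the $\s_{(j,l)}$ and the generators $E_{(i,k)}, F_{(i,k)}$ coming from the presentation of $_R\Sc_{n,r}$ in \cite{W}: applying $E_{(i,k)}$ then $F_{(i,k)}$ (or vice versa) to $m_\la \cdot h$ produces, by the explicit formulas \eqref{description E(i,k)} and \eqref{description F(i,k)}, an element of the form $m_\la \cdot (\text{symmetric polynomial in the }L_j)\cdot h$ up to lower-order weights; the key computation is that $F_{(i,k)}E_{(i,k)} 1_\la$ and $E_{(i,k)}F_{(i,k)}1_\la$ differ by an expression that, modulo the idempotent truncation to $1_\la$, involves $\s^\la_{(i,k)}$, $\s^\la_{(i+1,k)}$ (shifted along the $\vG$-identification) and scalars depending on $\la_i^{(k)}$, $\la_{i+1}^{(k)}$.

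The core of the argument is then a "telescoping'' computation. For the base case, the first coordinate $(1,1)$: here $\s^\la_{(1,1)}$ acts as $m_\la(L_{1}+\dots+L_{\la_1^{(1)}})\cdot h$, and I would show directly that this equals (a polynomial in) $E_{(1,1)}F_{(1,1)}1_\la$ — essentially because $L_1+\dots+L_t$ restricted to the relevant weight space is captured by moving one box and back via the $s_j$-strings appearing in $X_\la^{\la\pm\a_{(1,1)}}$. For the inductive step I would use the relation $\s_{(i+1,k)}^\la$ can be obtained from $\s_{(i,k)}^\la$ by conjugating/commuting with $E_{(i,k)}$ or $F_{(i,k)}$: concretely, $E_{(i,k)}\s_{(i+1,k)}^{\mu}F_{(i,k)}1_\la$ and the products $E_{(i,k)}F_{(i,k)}1_\la$, $F_{(i,k)}E_{(i,k)}1_\la$ together let one solve for $\s_{(i+1,k)}^\la$ in terms of $\s_{(i,k)}^\la$ plus a polynomial in the $E$'s and $F$'s evaluated at $1_\la$; since by induction $\s_{(i,k)}^\la = g^\la_{(i,k)}(F,E)1_\la$, substituting gives $g^\la_{(i+1,k)}$. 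The wrap-around case $(m_k,k)\to(1,k+1)$ needs the extra factor $h^\mu_{+(i,k)} = L_{N+1}-Q_{k+1}$ appearing in \eqref{description E(i,k)}, which is exactly what accounts for the shift in the cyclotomic parameter; I would handle it by the same manipulation, noting that $E_{(m_k,k)}F_{(m_k,k)}1_\la$ already carries this $L_{N+1}-Q_{k+1}$ factor, so the resulting polynomial $g^\la_{(1,k+1)}$ absorbs it.

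I expect the main obstacle to be the bookkeeping of the "lower-order'' correction terms: applying $E_{(i,k)}$ followed by $F_{(i,k)}$ to $m_\la\cdot h$ does not return a scalar multiple of $m_\la \cdot (\text{sym. poly.})\cdot h$ on the nose — it passes through the weight $\la+\a_{(i,k)}$ and the multiplicative normalizations $q^{-\mu^{(k)}_{i+1}+1}$, $q^{-\mu^{(k)}_i+1}$ and the lengths $\ell(x),\ell(y)$ must all be tracked, and one must verify these terms reorganize into symmetric functions of the $L_j$ that telescope correctly against the target $L_{N+1}+\dots+L_{N+\la_i^{(k)}}$. A clean way to control this is to work in $_\CK\Sc_{n,r}$ throughout (so all scalars are invertible and divisions are legal — this is why the statement is over $\CK$, not $\CA$), and to use the faithful action on $\bigoplus_\mu M^\mu$ to reduce every identity to an identity of explicit elements of $_\CK\He_{n,r}$ multiplied on the left by $m_\la$; the Ariki–Koike relations among the $L_i$ and $T_w$ then finish each verification. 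The existence (rather than an explicit formula) for $g^\la_{(i,k)}$ is all that is claimed, so I would not attempt to close-form the polynomial — only to exhibit it recursively.
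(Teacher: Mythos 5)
First, a point of reference: the paper does not prove this lemma at all --- it is imported verbatim from \cite[Lemma 7.2]{W} --- so there is no in-paper argument to compare yours against line by line. Judged on its own, the overall shape of your argument (induction along the total order on $\vG(\Bm)$; a base case in the first component; propagation within a component and a separate treatment of the wrap-around $(m_k,k)\to(1,k+1)$ via the factor $h^\mu_{+(i,k)}=L_{N+1}-Q_{k+1}$; working over $\CK$ so that coefficients can be inverted; and reducing every identity, via the faithful action on $\bigoplus_\mu M^\mu$, to an identity in $_\CK\He_{n,r}$ left-multiplied into $m_\la$) is the right one and is essentially the strategy of the cited source.

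There are, however, two concrete gaps. First, you cannot take the commutation relations ``from the presentation of $_R\Sc_{n,r}$'': the boundary case of relation \eqref{S-6} is \emph{stated} in terms of the polynomials $g^\la_{(m_k,k)}$ and $g^\la_{(1,k+1)}$ whose existence is exactly what is being proved, so the presentation is only available downstream of this lemma. Everything must be derived directly from \eqref{description E(i,k)}, \eqref{description F(i,k)} and the Ariki--Koike relations; you gesture at this, but you also lean on the presentation, and the two cannot both be starting points. Second, your base case is justified by the wrong mechanism. For a non-boundary index one has $h^\mu_{+(1,1)}=1$, so $E_{(1,1)}F_{(1,1)}(m_\la)$ is a sum of products of elements $T_w^\ast$ with $w\in\FS_n$ applied to $m_\la$: no $L_j$ appears, and nothing is ``captured by moving one box and back.'' What actually makes the base case work is the cyclotomic relation: since $m_\la$ contains the factors $(L_1-Q_c)$ for all $c\geq 2$ (when $|\la^{(1)}|\geq 1$), one gets $L_1 m_\la=Q_1 m_\la$, hence $L_j m_\la=q^{2(j-1)}Q_1 m_\la$ for $j$ in the first row, so $\s^\la_{(1,1)}$ is a \emph{scalar} multiple of $1_\la$ and the polynomial is a constant. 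Relatedly, the within-component inductive step rests entirely on the claim that the expansion of $F_{(i,k)}E_{(i,k)}(m_\la)$ (or a suitable combination) contains $\s^\la_{(i+1,k)}$ with a coefficient that is nonzero in $\CK$, so that one can ``solve'' for it; this is precisely the explicit Hecke-algebra computation that your sketch defers, and it is where the entire content of the lemma lives. As written, the proposal is a plausible outline of the proof in \cite{W} rather than a proof.
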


We remark that 
a polynomial $g^{\la}_{(i,k)} (\bx, \by) \in \CK \lan \bx \ran \otimes_{\CK} \CK \lan \by \ran$  
satisfying \eqref{polynomial JM} 
is not unique in general. 
Then 
we take and fix a polynomial $g^{\la}_{(i,k)} (\bx, \by) \in \CK \lan \bx \ran \otimes_{\CK} \CK \lan \by \ran$  
 ($\la \in \vL_{n,r}(\Bm)$, $(i,k) \in \vG(\Bm)$) 
satisfying \eqref{polynomial JM} 
when we describe a presentation of $_\CK \Sc_{n,r}$.

For an integer $k \in \ZZ$, 
put 
$[k] = (q^k - q^{-k})/(q - q^{-1})$. 
For a positive integer $t \in \ZZ_{>0}$, 
put 
$[t]!=[t][t-1] \dots [1]$ 
and 
set 
$[0]!=1$. 

Now we can describe a presentation of cyclotomic $q$-Schur algebras 
as follows. 

\begin{thm}[{\cite[Theorem 7.16]{W}}]\
\label{Theorem presentation}
$_\CK \Sc_{n,r}$ is the associative algebra over $\CK$ generated by  
$E_{(i,k)}$, $F_{(i,k)}$ $((i,k) \in \vG'(\Bm))$, $1_\la$ $(\la \in \vL_{n,r}(\Bm))$ 
with the following defining relations:  
\begin{align}
&1_\la 1_\mu = \d_{\la,\mu} 1_\la, \quad \sum_{\la \in \vL_{n,r}(\Bm)} 1_\la =1, \label{S-1}\\
&E_{(i,k)} 1_\la = 
	\begin{cases} 
		1_{\la + \a_{(i,k)}} E_{(i,k)} & \text{ if }\la + \a_{(i,k)} \in \vL_{n,r}(\Bm), \label{S-2}\\
		0 & \text{otherwise},
	\end{cases}	\\
&F_{(i,k)} 1_\la = 
	\begin{cases} 
		1_{\la - \a_{(i,k)}} F_{(i,k)} & \text{ if }\la - \a_{(i,k)} \in \vL_{n,r}(\Bm), \label{S-3}\\
		0 & \text{otherwise},
	\end{cases}	\\
&1_{\la} E_{(i,k)}  = 
	\begin{cases} 
		E_{(i,k)} 1_{\la - \a_{(i,k)}} & \text{ if }\la - \a_{(i,k)} \in \vL_{n,r}(\Bm), \label{S-4}\\
		0 & \text{otherwise},
	\end{cases}	\\
&1_\la F_{(i,k)} = 
	\begin{cases} 
		F_{(i,k)} 1_{\la + \a_{(i,k)}} & \text{ if }\la + \a_{(i,k)} \in \vL_{n,r}(\Bm), \label{S-5}\\
		0 & \text{otherwise},
	\end{cases}	\\
&
\label{S-6}
E_{(i,k)}F_{(j,l)} - F_{(j,l)}E_{(i,k)} 
		=\d_{(i,k),(j,l)} \sum_{\la \in \vL_{n,r}} \eta_{(i,k)}^\la, 
\end{align}
where
$ \displaystyle 
\eta_{(i,k)}^\la = 
\begin{cases} 
	[\la_i^{(k)} - \la_{i+1}^{(k)}] 1_\la 
		&\text{if } i\not= m_k, 
	\\[3mm]
	\Big( -Q_{k+1} [ \la_{m_k}^{(k)} - \la_1^{(k+1)}] 
	\\ \quad 
		+ q^{\la_{m_k}^{(k)} - \la_1^{(k+1)}} 
			(q^{-1} g_{(m_k,k)}^\la(F,E) - q g_{(1,k+1)}^\la(F,E) ) \Big) 1_\la 
		&\text{if } i=m_k,
\end{cases}
$
\begin{align}
&E_{(i \pm 1,k)}(E_{(i,k)})^2 - (q+q^{-1}) E_{(i,k)} E_{(i \pm 1,k)} E_{(i,k)} 
	+ (E_{(i,k)})^2 E_{(i \pm 1,k)}=0 , \label{S-7}\\
& E_{(i,k)} E_{(j,l)}= E_{(j,l)} E_{(i,k)} \qquad (|(i,k)- (j,l)| \geq 2), \notag \\
&F_{(i \pm 1,k)}(F_{(i,k)})^2 - (q+q^{-1}) F_{(i,k)} F_{(i \pm 1,k)} F_{(i,k)} 
	+ (F_{(i,k)})^2 F_{(i \pm 1,k)}=0,  \label{S-8}\\
& F_{(i,k)} F_{(j,l)}= F_{(j,l)} F_{(i,k)} \qquad  (|(i,k)- (j,l)| \geq 2), \notag 
\end{align}
where 
$(i,k) - (j,l) = (\sum_{a=1}^{k-1} m_a + i ) - ( \sum_{b=1}^{l-1} m_b +j)$ 
for $(i,k), (j,l) \in \vG'(\Bm)$. 

Moreover, 
$_\CA \Sc_{n,r}$ is isomorphic to the $\CA$-subalgebra of $_\CK \Sc_{n,r}$ 
generated by 
$E_{(i,k)}^l/[l]!$, 
$F_{(i,k)}^l/[l]!$ 
$\big( (i,k) \in \vG'(\Bm),  \, l \geq 1\big)$, 
$1_\la$ $(\la \in \vL_{n,r}(\Bm) )$. 
Then 
we can obtain the cyclotomic $q$-Schur algebra 
$_R \Sc_{n,r}$ over $R$ 
as the specialized algebra  $R \otimes_{\CA} \, _\CA \Sc_{n,r}$ of $_\CA \Sc_{n,r}$. 
\end{thm}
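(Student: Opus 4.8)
The plan is to verify the theorem by reducing it, via the generic base $\CA=\ZZ[q,q^{-1},Q_1,\dots,Q_r]$ and its fraction field $\CK$, to two assertions: first, that the listed generators and relations give a presentation of $_\CK\Sc_{n,r}$, and second, that the divided-power $\CA$-subalgebra they generate is the integral form $_\CA\Sc_{n,r}$ whose base change recovers $_R\Sc_{n,r}$ for arbitrary $R$. Both of these are precisely the content of \cite[Theorem 7.16]{W}, so at the level of this excerpt the proof is a citation; what I would write out is the bookkeeping that confirms the elements $E_{(i,k)},F_{(i,k)},1_\la,\s_{(i,k)}^\la$ defined here by their action on $\bigoplus_\mu M^\mu$ coincide with the generators used in \cite{W}, and that the relations \eqref{S-1}--\eqref{S-8} are exactly those proved there.

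Concretely, I would proceed as follows. \emph{Step 1: the weight decomposition.} From the definition of $1_\la$ one reads off \eqref{S-1} and the fact that the $1_\la$ are pairwise orthogonal idempotents summing to $1$; this is already noted in the excerpt. \emph{Step 2: the commutation of $E_{(i,k)},F_{(i,k)}$ with weights.} Relations \eqref{S-2}--\eqref{S-5} follow by a direct computation from \eqref{description E(i,k)} and \eqref{description F(i,k)}: applying $E_{(i,k)}$ to $m_\mu\cdot h$ lands in $M^{\mu+\a_{(i,k)}}$ when that is a composition and is zero otherwise, and symmetrically for $F_{(i,k)}$; so $1_\nu E_{(i,k)}1_\la$ is nonzero only for $\nu=\la+\a_{(i,k)}$, giving the four relations at once. \emph{Step 3: the Serre relations \eqref{S-7}--\eqref{S-8} and the commutativity for $|(i,k)-(j,l)|\geq2$.} These involve only the $E$'s (resp.\ only the $F$'s), hence only the symmetric-group part $m_\mu=(\sum_{w\in\FS_\mu}q^{\ell(w)}T_w)(\cdots)$ and the coset representatives $X_\mu^{\mu\pm\a_{(i,k)}}$; they are the $\Fgl_m$-type quantum Serre relations and are verified by the same coset/length combinatorics as for the ordinary $q$-Schur algebra. \emph{Step 4: the mixed relation \eqref{S-6}.} This is the delicate one: one computes $E_{(i,k)}F_{(j,l)}-F_{(j,l)}E_{(i,k)}$ on $m_\mu\cdot h$. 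For $(i,k)\neq(j,l)$ with the indices ``far apart'' the two operators act on disjoint blocks and commute; for $(i,k)\neq(j,l)$ adjacent one checks the cross terms cancel; for $(i,k)=(j,l)$ one gets, after simplifying sums of $T_x^\ast$ over the coset representatives, a multiplication operator by an element of the Jucys--Murphy subalgebra, and Lemma \ref{Lemma polynomial JM} identifies that element as $\sum_\la\eta_{(i,k)}^\la$ with $\eta_{(i,k)}^\la$ as displayed (the $i\neq m_k$ case gives the quantum integer $[\la_i^{(k)}-\la_{i+1}^{(k)}]$, and the $i=m_k$ case, where the extra factor $L_{N+1}-Q_{k+1}$ in $h^\mu_{+(i,k)}$ enters, produces the $Q_{k+1}$-term together with the $g^\la_{(m_k,k)}(F,E)$ and $g^\la_{(1,k+1)}(F,E)$ corrections).

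\emph{Step 5: generation and the integral form.} Having checked the relations hold, one argues that $E_{(i,k)},F_{(i,k)},1_\la$ generate $_\CK\Sc_{n,r}$ — e.g.\ because they already generate all the $\s_{(i,k)}$ (Lemma \ref{Lemma polynomial JM}) and, together with the $1_\la$, enough of the endomorphism algebra to separate all $M^\mu$ — and then that the abstract algebra on these generators and relations has dimension at most $\dim_\CK {}_\CK\Sc_{n,r}$, forcing the presentation. For the $\CA$-form one checks that the divided powers $E_{(i,k)}^l/[l]!$, $F_{(i,k)}^l/[l]!$ preserve the natural $\CA$-lattice $\bigoplus_\mu {}_\CA M^\mu$ inside $\bigoplus_\mu {}_\CK M^\mu$, that they generate an $\CA$-form of the right rank (the cellular basis of \cite{DJM98} gives the rank), and that $R\otimes_\CA{}_\CA\Sc_{n,r}\cong{}_R\Sc_{n,r}$ by comparing cellular bases, which base-change freely.

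\textbf{Main obstacle.} The hard step is Step 4, the mixed relation \eqref{S-6} in the case $i=m_k$: this is where the cyclotomic parameters $Q_1,\dots,Q_r$ genuinely intervene, the relevant computation is over $\CK$ rather than $\ZZ$, and the answer is only expressible after invoking the (non-unique) Jucys--Murphy polynomials $g^\la_{(i,k)}(\bx,\by)$ of Lemma \ref{Lemma polynomial JM}; controlling the $L_i$-action and matching it to those polynomials is the technical heart and is exactly the point at which I would lean on \cite[\S7]{W} rather than redo the calculation.
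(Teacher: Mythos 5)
The paper gives no proof of this theorem: it is stated purely as a quoted result from \cite[Theorem 7.16]{W} in the review section, so your decision to treat it as a citation matches the paper exactly, and your supplementary outline (weight idempotents, Serre relations, the mixed commutator via the Jucys--Murphy polynomials of Lemma \ref{Lemma polynomial JM}, and the divided-power $\CA$-lattice) is a faithful sketch of how \cite{W} actually establishes it. Nothing further is needed.
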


\para 
\label{Definition Weyl}
\textit{Weyl modules} (see \cite{W} for more details).
Let 
$_\CA \Sc_{n,r}^+$ (resp. $_\CA \Sc_{n,r}^-$) 
be the subalgebra of $_\CA \Sc_{n,r}$ 
generated by 
$E_{(i,k)}^{l}/[l]!$ 
(resp. $F_{(i,k)}^{l}/[l]!$) 
for $(i,k) \in \vG'(\Bm)$ and $l \geq 1$. 
Let  
$_\CA \Sc_{n,r}^0$ 
be the subalgebra of $_\CA \Sc_{n,r}$ 
generated by 
$1_\la$ for $\la \in \vL_{n,r}(\Bm)$. 
Then 
$_\CA \Sc_{n,r}$ has the triangular decomposition 
$_\CA \Sc_{n,r} = \, _\CA \Sc_{n,r}^- \, _\CA \Sc_{n,r}^0 \, _\CA \Sc_{n,r}^+$ 
by \cite[Proposition 3.2, Theorem 4.12, Theorem 5.6, Proposition 6.4, Proposition 7.7 and Theorem 7.16]{W}
. 
We denote by 
$_\CA \Sc_{n,r}^{\geq 0}$ 
the subalgebra of $_\CA \Sc_{n,r}$ 
generated by 
$_\CA \Sc_{n,r}^+$ and $_\CA \Sc_{n,r}^0$. 

Note that 
$_R \Sc_{n,r}$ 
is the specialized algebra 
$R \otimes_{\CA} \, _\CA \Sc_{n,r}$. 
We denote by 
$E^{(l)}_{(i,k)}$ (resp. $F^{(l)}_{(i,k)}$, $1_\la$) 
the elements $1 \otimes E^l_{(i,k)}/[l]!$ (resp. $1 \otimes F^l_{(i,k)}/[l]!$, $1 \otimes 1_\la$) 
of $R \otimes_{\CA} \, _\CA \Sc_{n,r}$. 
Then $_R \Sc_{n,r}$ also has the triangular decomposition 
\[
_R \Sc_{n,r} = \, _R \Sc_{n,r}^- \, _R \Sc_{n,r}^0 \, _R \Sc_{n,r}^+
\] 
which comes from the triangular decomposition of $_\CA \Sc_{n,r}$.

For $\la \in \vL_{n,r}^+$, 
we define the one-dimensional $_R \Sc_{n,r}^{\geq 0}$-module 
$\Theta_\la = R v_\la$ 
by 
$E_{(i,k)}^{(l)} \cdot\nobreak  v_\la =0$ $\big( (i,k) \in \vG'(\Bm), l \geq 1 \big)$ 
and 
$1_\mu \cdot v_\la = \d_{\la,\mu} v_\la $ $(\mu \in \vL_{n,r}(\Bm))$. 
Then the  Weyl module $_R \D_n(\la)$ of $_R \Sc_{n,r}$ 
is defined as the induced module of $\Theta_\la$:
\[ 
_R \D_n(\la) = \, _R \Sc_{n,r} \otimes_{_R \Sc_{n,r}^{\geq 0}} \Theta_\la. 
\]
See also \cite[paragraph 3.3 and Theorem 3.4]{W} for definitions of $_R \D_n(\la)$.

It is known that 
$_\CK \Sc_{n,r}$ 
is semi-simple,  
and that 
$\{_\CK \D_n (\la) \,|\, \la \in \vL_{n,r}^+\}$ 
gives a complete set of pairwise non-isomorphic (left) simple $_\CK \Sc_{n,r}$-modules.

\para 
\textit{Highest weight modules.} 
Let 
$M$ be an $_R \Sc_{n,r}$-module. 
We say that 
an element $m \in M$ 
is a primitive  vector if 
$E_{(i,k)}^{(l)} \cdot m=0$ 
for any $(i,k) \in \vG'(\Bm)$ and $l \geq 1$,  
and say that 
$m \in M$ 
is a weight vector of weight $\mu$ 
if $1_\mu \cdot m =m$ for $\mu \in \vL_{n,r}(\Bm)$. 
If 
$x_\la \in M$ 
is a primitive and  a weight vector of weight $\la$, 
we say that 
$x_\la$ is a highest weight vector of weight $\la$. 
If 
$M$ is generated by a highest weight vector $x_\la \in M$ of weight $\la$ 
as an $_R \Sc_{n,r}$-module, 
we say that 
$M$ is a highest weight module of highest weight $\la$. 
It is clear that 
the Weyl module $_R \D_n(\la)$ ($\la \in \vL_{n,r}^+$) 
is a highest weight module. 
Moreover, 
we have the following universality of the Weyl modules.

\begin{lem}
\label{Lemma universality of Weyl} 
Let $M$ be an $_R \Sc_{n,r}$-module. 
If $M$ is a highest weight module of highest weight $\la$, 
there exists a surjective $_R \Sc_{n,r}$-homomorphism 
$_R \D_n(\la) \ra M$ 
such that 
$1 \otimes v_\la \mapsto x_\la$, 
where 
$x_\la$ is a highest weight vector of $M$. 
\end{lem}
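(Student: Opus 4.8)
The plan is to produce the desired surjection by the universal property of induction. Since $M$ is a highest weight module of highest weight $\la$, it is generated as an $_R\Sc_{n,r}$-module by a single highest weight vector $x_\la$, which by definition is a weight vector of weight $\la$ (so $1_\mu\cdot x_\la=\d_{\la,\mu}x_\la$ for all $\mu\in\vL_{n,r}(\Bm)$) and is primitive (so $E_{(i,k)}^{(l)}\cdot x_\la=0$ for all $(i,k)\in\vG'(\Bm)$ and $l\geq 1$). These are precisely the relations that define the one-dimensional $_R\Sc_{n,r}^{\geq 0}$-module $\Theta_\la=Rv_\la$ from paragraph \ref{Definition Weyl}. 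Hence the $R$-linear map $\Theta_\la\ra M$ sending $v_\la\mapsto x_\la$ is a homomorphism of $_R\Sc_{n,r}^{\geq 0}$-modules: one checks that both generators $1_\mu$ and $E_{(i,k)}^{(l)}$ act compatibly under this map, which is immediate from the two displayed conditions on $x_\la$ and the corresponding conditions defining $\Theta_\la$.

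First I would invoke the adjunction (tensor–hom adjunction for the inclusion $_R\Sc_{n,r}^{\geq 0}\hookrightarrow \,_R\Sc_{n,r}$): the $_R\Sc_{n,r}^{\geq 0}$-homomorphism $\Theta_\la\ra M$ extends uniquely to an $_R\Sc_{n,r}$-homomorphism
\[
\f:\ _R\D_n(\la)=\,_R\Sc_{n,r}\otimes_{_R\Sc_{n,r}^{\geq 0}}\Theta_\la\ \lra\ M,\qquad a\otimes v_\la\mapsto a\cdot x_\la,
\]
since $M$ is in particular an $_R\Sc_{n,r}^{\geq 0}$-module by restriction. In particular $\f(1\otimes v_\la)=x_\la$. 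Then I would check surjectivity: the image of $\f$ is an $_R\Sc_{n,r}$-submodule of $M$ containing $x_\la$, but $M$ is generated by $x_\la$ as an $_R\Sc_{n,r}$-module, so $\Im\f=M$. This gives the required surjective homomorphism $_R\D_n(\la)\ra M$ with $1\otimes v_\la\mapsto x_\la$.

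There is essentially no serious obstacle here; the statement is a formal consequence of the construction of $_R\D_n(\la)$ as an induced module together with the definition of a highest weight module. The only point requiring a moment's care is the verification that $v_\la\mapsto x_\la$ respects the $_R\Sc_{n,r}^{\geq 0}$-action — that is, that the defining relations of $\Theta_\la$ (annihilation by the divided powers $E_{(i,k)}^{(l)}$ and the idempotent action $1_\mu v_\la=\d_{\la,\mu}v_\la$) are all satisfied by $x_\la$ in $M$ — but this is exactly what "primitive and weight vector of weight $\la$" means, so it is immediate. Thus the main (and only) step is to correctly cite the adjunction/universal property of the induced module $_R\Sc_{n,r}\otimes_{_R\Sc_{n,r}^{\geq 0}}(-)$ and observe that generation of $M$ by $x_\la$ forces surjectivity.
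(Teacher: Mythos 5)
Your proof is correct and follows essentially the same route as the paper: both construct the map via the universal property of the induced module $_R\Sc_{n,r}\otimes_{_R\Sc_{n,r}^{\geq 0}}\Theta_\la$ (the paper phrases it through a balanced map, you through the adjunction, which is the same thing) and deduce surjectivity from the fact that $x_\la$ generates $M$.
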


\begin{proof}
Let $x_\la \in M$ be a highest weight vector of weight $\la$. 
Then we can define the well-defined $_R \Sc_{n,r}^{\geq 0}$-balanced map 
$_R \Sc_{n,r} \times \Theta_\la \ra M $ 
such that  
$(s, v_\la) \mapsto s \cdot x_\la$. 
This implies the $_R \Sc_{n,r}$-homomorphism 
$_R \Sc_{n,r} \otimes_{_R \Sc_{n,r}^{\geq 0}} \Theta_\la \ra M$ 
such that 
$s \otimes v_\la \mapsto s \cdot x_\la$. 
Since 
$M$ is generated by $x_\la$ as an $_R \Sc_{n,r}$-module, 
this homomorphism is surjective. 
\end{proof}

\para 
In \cite{DJM98}, 
it was proven that 
$_R \He_{n,r}$ (resp. $_R \Sc_{n,r}$) is a cellular algebra 
by using combinatorial arguments.  
We will use such structures and combinatorics in the later arguments. 
So, we review some of them (see \cite{DJM98} for details).

For $\mu \in \vL_{n,r}(\Bm)$, 
the diagram $[\mu]$ of $\mu$ is 
the set 
\[
[\mu]= 
\{(i,j,k) \in \ZZ^3 \,|\, 1 \leq i \leq m_k, \, 1 \leq j \leq \mu_i^{(k)}, \, 1 \leq k \leq r\}.
\] 
For $\la \in \vL_{n,r}^+$ and $x \in \ZZ_{>0} \times \ZZ_{>0} \times \{1, \dots, r\}$, 
we say that 
$x$ is a removable node (resp. an addable node) 
of $\la$ 
if 
$[\la] \setminus \{x\}$ (resp. $[\la] \cup \{x\}$) 
is the diagram of a certain $r$-partition $\mu \in \vL_{n-1,r}^+$ 
(resp. $\mu \in \vL_{n+1,r}^+$). 
In such case, 
we denote the above  $\mu \in \vL_{n-1,r}^+$ (resp.  $\mu \in \vL_{n+1,r}^+$) 
by $\la \setminus x$ (resp. $\la \cup x$), 
namely 
$[\la \setminus x]=[\la] \setminus\{x\}$ 
(resp. $[\la \cup x]=[\la] \cup \{x\}$).

We define the partial order \lq\lq $\succeq$" on $\ZZ_{>0}\times \ZZ_{>0} \times \{1,\dots, r\}$ 
by 
\[ 
(i,j,k) \succ (i', j', k') 
\text{ if }
k< k',  
\text{ or if } 
k=k' \text{ and } i<i'.
\]
We also define a partial order \lq\lq $\succeq$" on $\ZZ_{>0} \times \{1,\dots, r\}$ 
by 
\[
(i,k) \succ (i', k') 
\text{ if } 
(i,1,k) \succ (i',1,k').
\] 
For $\la \in \vL^+_{n,r}$, 
a standard tableau $\Ft$ of shape $\la$ 
is a bijection 
\[ 
\Ft : [\la] \ra \{1, 2, \dots, n\} 
\] 
satisfying the following two conditions: 
\begin{enumerate}
\item 
$\Ft((i,j,k)) < \Ft((i, j+1,k))$ if $(i,j+1, k) \in [\la]$, 

\item 
$\Ft((i,j,k)) < \Ft((i+1, j,k))$ if $(i+1,j, k) \in [\la]$. 
\end{enumerate}
We denote by $\Std (\la)$ the set of standard tableaux of shape $\la$.

For $\mu \in \vL_{n,r}(\Bm)$, 
we define the bijection 
$\Ft^\mu : [\mu] \ra \{1,2,\dots, n\}$ as  
\[ 
\Ft^\mu ((i,j,k)) = \sum_{c=1}^{k-1} |\mu^{(c)}| + \sum_a^{i-1} \mu^{(k)}_a + j.
\]
It is clear that 
$\Ft^\la \in \Std(\la)$ 
for $\la \in \vL_{n,r}^+$. 
For $\Ft \in \Std(\la)$, 
we define $d(\Ft) \in \FS_n$ as 
\[ 
\Ft((i,j,k)) = d(\Ft) (\Ft^\la (i,j,k)) \quad ((i,j,k) \in [\la]).
\]
For 
$\la \in \vL_{n,r}^+$ and $\Fs, \Ft \in \Std(\la)$, 
set   
$m_{\Fs \Ft} = T^\ast_{d(\Fs)} m_\la T_{d(\Ft)} \in \, _R \He_{n,r}$.  
Then we have the following theorem. 

\begin{thm}[{\cite[Theorem 3.26]{DJM98}}]\
$_R \He_{n,r}$ is a cellular algebra with a cellular basis 
$\{m _{\Fs \Ft} \,|\, \Fs, \Ft \in \Std(\la) \text{ for some } \la \in \vL_{n,r}^+\}$ 
with respect to the poset $(\vL_{n,r}^+, \geq)$. 
In particular, 
we have 
$m_{\Fs \Ft}^\ast = m_{\Ft \Fs}$. 
\end{thm}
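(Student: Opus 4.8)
The plan is to follow Dipper--James--Mathas: reduce the assertion to a dimension count together with a straightening argument organised by the dominance order, settling the ``in particular'' clause en route.

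The only external input I will use is the Ariki--Koike basis theorem, that ${}_R\He_{n,r}$ is $R$-free of rank $r^n n!$ with basis $\{T_wL_1^{c_1}\cdots L_n^{c_n}:w\in\FS_n,\ 0\le c_i\le r-1\}$; since $\sum_{\la\in\vL_{n,r}^+}|\Std(\la)|^2=r^n n!$ (a standard identity, reflecting the split semisimplicity of the generic Ariki--Koike algebra with simple modules of dimension $|\Std(\la)|$ indexed by $\la\in\vL_{n,r}^+$), it suffices to show that the $m_{\Fs\Ft}$ span ${}_R\He_{n,r}$ over $R$, after which the cardinality forces them to be a basis. For the antiautomorphism property, $T_i^\ast=T_i$ gives $L_i^\ast=L_i$ by induction, so the factor $\prod_k\prod_{i=1}^{a_k}(L_i-Q_k)$ of $m_\mu$ is $\ast$-fixed, while $\big(\sum_{w\in\FS_\mu}q^{\ell(w)}T_w\big)^\ast=\sum_{w\in\FS_\mu}q^{\ell(w)}T_w$ because $\ell(w)=\ell(w^{-1})$; moreover, since $a_k=\sum_{j<k}|\mu^{(j)}|$ is a boundary between components of $\mu$, each factor $\prod_{i=1}^{a_k}(L_i-Q_k)$ is symmetric in $L_1,\dots,L_{a_k}$ and commutes with every $T_w$, $w\in\FS_\mu$, so the two factors of $m_\mu$ commute and $m_\mu^\ast=m_\mu$. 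Hence $m_{\Fs\Ft}^\ast=(T^\ast_{d(\Fs)}m_\la T_{d(\Ft)})^\ast=T^\ast_{d(\Ft)}m_\la T_{d(\Fs)}=m_{\Ft\Fs}$.

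Write $\He^{>\la}$ for the $R$-span of the $m_{\Fu\Fv}$ with $\Fu,\Fv\in\Std(\mu)$ and $\mu>\la$. The heart of the proof is the congruence, valid for every $h\in{}_R\He_{n,r}$,
\[
m_{\Fs\Ft}\,h\;\equiv\;\sum_{\Fv\in\Std(\la)}r_{\Ft\Fv}(h)\,m_{\Fs\Fv}\pmod{\He^{>\la}},\qquad r_{\Ft\Fv}(h)\in R\ \text{independent of}\ \Fs,
\]
together with its mirror image for left multiplication, obtained by applying $\ast$. It is enough to prove this for $h$ among the generators $T_0,T_1,\dots,T_{n-1}$, arguing by downward induction on $\la$ in the dominance order and, for fixed $\la$, on $\ell(d(\Ft))$. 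For $h=T_i$ with $i\ge1$ one is inside the type $A$ Hecke algebra: using $T_sx_\la=qx_\la$ for $s\in\FS_\la$ and the commutation of the cyclotomic factor of $m_\la$ with $\FS_\la$, right multiplication of $m_\la T_{d(\Ft)}$ by $T_i$ is first written as a combination of $m_\la T_{d(\Fv)}$ with $\Fv$ row-standard, and then Murphy's Garnir relations rewrite these in terms of standard $\Fv$ modulo $\He^{>\la}$. For $h=T_0=L_1$ one additionally commutes $L_1$ leftward through $T_{d(\Ft)}$ via $T_jL_j=L_{j+1}T_j^{-1}$ (and the commutation of $T_j$ with $L_i$, $i\ne j,j+1$), and then absorbs the resulting $L_i$'s into the factor $\prod(L_i-Q_k)$ of $m_\la$ using the cyclotomic relations $\prod_k(L_i-Q_k)=0$. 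Granting the congruence, iterating it and its mirror from $m_\la=m_{\Ft^\la\Ft^\la}$ and comparing with the Ariki--Koike basis shows the $m_{\Fs\Ft}$ span ${}_R\He_{n,r}$; by the count they form a basis, the displayed congruence is the Graham--Lehrer cellular axiom, and $\ast$ swaps the two indices, so ${}_R\He_{n,r}$ is cellular with cellular basis $\{m_{\Fs\Ft}\}$ with respect to $(\vL_{n,r}^+,\ge)$, with $m_{\Fs\Ft}^\ast=m_{\Ft\Fs}$. The main obstacle is the $h=T_0$ case: one must check that every term arising when an $L_i$ is pushed into the symmetriser either closes up into some $m_{\Fu\Fv}$ of shape $\la$ with coefficient independent of $\Fs$ or lands in $\He^{>\la}$ --- equivalently, that the rewriting can only raise the shape in the dominance order --- and this requires a careful analysis of addable and removable nodes and of the dominance order on $r$-partitions.
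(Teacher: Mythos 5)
The paper offers no proof of this statement: it is quoted directly from Dipper--James--Mathas \cite[Theorem 3.26]{DJM98}, so the only comparison available is with the argument in that reference, which your outline indeed follows. The parts you actually carry out are correct. The $\ast$-computation is complete: $L_i^\ast=L_i$ by induction from $T_0^\ast=T_0$, the symmetriser is $\ast$-fixed since $\ell(w)=\ell(w^{-1})$, and the observation that $\prod_{i=1}^{a_k}(L_i-Q_k)$ is a symmetric polynomial in $L_1,\dots,L_{a_k}$ while $\FS_\mu$ preserves $\{1,\dots,a_k\}$ (each $a_k$ being a component boundary) correctly yields $m_\mu^\ast=m_\mu$ and hence $m_{\Fs\Ft}^\ast=m_{\Ft\Fs}$. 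The counting reduction is also sound: a spanning set of cardinality $r^nn!=\sum_\la|\Std(\la)|^2$ inside a free module of that rank is automatically a basis over any commutative ring.

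The genuine gap is the one you flag yourself: the straightening congruence $m_{\Fs\Ft}h\equiv\sum_\Fv r_{\Ft\Fv}(h)m_{\Fs\Fv}\pmod{\He^{>\la}}$ is not proved, and it is the entire content of the theorem --- both the Graham--Lehrer multiplication axiom and (since your only route to spanning is to iterate this congruence starting from $m_{\Ft^\la\Ft^\la}$) the basis claim depend on it. For $h=T_i$, $i\ge 1$, one needs the Garnir-relation combinatorics transplanted to multipartitions; for $h=T_0$ one must control all the terms produced when $L_1$ is commuted through $T_{d(\Ft)}$ and the resulting $L_j$'s are absorbed into $m_\la$, and verify that every term either reassembles into some $m_{\Fu\Fv}$ of shape $\la$ with coefficient independent of $\Fs$ or has strictly more dominant shape. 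That verification is the technical core of \cite{DJM98} and cannot be waved through with ``a careful analysis of addable and removable nodes''; as written, your proposal is an accurate roadmap of the standard proof rather than a proof.
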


\para 
For $\la \in \vL_{n,r}^+$ and $\mu \in \vL_{n,r}(\Bm)$, 
a tableau of shape $\la$ with weight $\mu$ 
is a map 
\[
T :\nobreak [\la] \ra \{(a,c) \in \ZZ \times \ZZ \,|\, a \geq 1, 1 \leq c \leq r\}
\] 
such that 
$\mu_i^{(k)} = \sharp \{x \in [\la] \,|\, T(x) = (i,k)\}$. 
We define the order on $\ZZ \times \ZZ$ 
by 
$(a,c) \geq (a',c')$ either if $c > c'$, or if $c=c'$ and $a \geq a'$.
For a tableau $T$ of shape $\la$ with weight $\mu$, 
we say that $T$ is  semi-standard if $T$ satisfies the following conditions: 
\begin{enumerate}
\item If $T((i,j,k)) =(a,c)$, then $k \leq c$,
\item $T((i,j,k)) \leq T((i,j+1,k))$ if $(i,j+1,k) \in [\la]$, 
\item $T((i,j,k)) < T((i+1,j,k))$ if $(i+1,j,k) \in [\la]$. 
\end{enumerate}
For $\la \in \vL_{n,r}^+$ and  $\mu \in \vL_{n,r}(\Bm)$, 
we denote by 
$\CT_0(\la,\mu)$ 
the set of semi-standard tableaux of shape $\la$ with weight $\mu$. 
Put $\CT_0(\la) = \bigcup_{\mu \in \vL_{n,r}(\Bm)} \CT_0(\la,\mu)$.

For $\la \in \vL^+_{n,r}$, 
let 
$T^\la$ be the tableau of shape $\la$ with weight $\la$ 
such that 
\[ 
T^\la((i,j,k)) =(i,k).
\]
It is clear that 
$T^\la$ is semi-standard, 
and it is the unique semi-standard tableau of shape $\la$ with weight $\la$. 
Namely, we have $\CT_0(\la,\la) =\{ T^\la\}$.

For $\Ft \in \Std(\la)$ ($\la \in \vL_{n,r}^+$) and $\mu \in \vL_{n,r}$, 
we define the tableau $\mu(\Ft)$ of shape $\la$ with weight $\mu$ by 
\[
\mu(\Ft) ((i,j,k)) = (a,c), 
\text{ if } 
\Ft^\mu ((a,b,c)) = \Ft((i,j,k)) 
\text{ for some $b$}.
\]
For 
$S\in \CT_0(\la,\mu)$, $T \in \CT_0(\la,\nu)$ ($\la \in \vL_{n,r}^+$, $\mu, \nu \in \vL_{n,r}(\Bm)$), 
put 
\[
m_{ST} = \sum_{\Fs, \Ft \in \Std(\la) \atop \mu(\Fs) =S, \nu(\Ft)=T} q^{\ell(d(\Fs)) + \ell(d(\Ft))} 
	m_{\Fs \Ft}, 
\]
and  define the element $\vf_{ST} \in \, _R \Sc_{n,r}$ by 
\[ 
\vf_{ST}( m_\t \cdot h) = \d_{\nu, \t} m_{ST} \cdot h 
	\quad ( \t \in \vL_{n,r}(\Bm), \, h \in \, _R \He_{n,r}).
\]
Then we have the following theorem.

\begin{thm}[{\cite[Theorem 6.6]{DJM98}}]\
$_R \Sc_{n,r}$ is a cellular algebra with a cellular basis 
$\{\vf_{ST}\,|\, S, T \in \CT_0(\la) \text{ for some } \la \in \vL_{n,r}^+\}$ 
with respect to the poset $(\vL_{n,r}^+, \geq)$. 
In particular, 
there exists an anti-automorphism  
$\theta_n : \,_R \Sc_{n,r} \ra \, _R \Sc_{n,r}$ 
such that 
$\theta_n(\vf_{ST})=\vf_{TS}$. 
Moreover, 
$_R\Sc_{n,r}$ 
is a quasi-hereditary algebra 
when 
$R$ is a field. 
\end{thm}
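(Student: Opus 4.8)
The plan is to transport the cellular structure of ${}_R\He_{n,r}$ to ${}_R\Sc_{n,r}$, following Dipper--James--Mathas. Since
\[
{}_R\Sc_{n,r}=\End_{{}_R\He_{n,r}}\Big(\bigoplus_{\mu\in\vL_{n,r}(\Bm)}M^\mu\Big)=\bigoplus_{\mu,\nu\in\vL_{n,r}(\Bm)}\Hom_{{}_R\He_{n,r}}(M^\nu,M^\mu),
\]
the core task is to produce an explicit $R$-basis of each $\Hom_{{}_R\He_{n,r}}(M^\nu,M^\mu)$ and to control composition of such maps. A homomorphism of right $\He$-modules $M^\nu\to M^\mu$ is determined by the image of the cyclic generator $m_\nu$, so one must identify those $z\in M^\mu$ for which $m_\nu\mapsto z$ extends, i.e. for which $z$ is right-annihilated by $\{h\in{}_R\He_{n,r}\mid m_\nu h=0\}$. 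The claim to prove is that these $z$ form a free $R$-module with basis $\{m_{ST}\mid S\in\CT_0(\la,\mu),\ T\in\CT_0(\la,\nu),\ \la\in\vL_{n,r}^+\}$; granting it, each $\vf_{ST}$ is well defined and $\{\vf_{ST}\mid S,T\in\CT_0(\la),\ \la\in\vL_{n,r}^+\}$ is an $R$-basis of ${}_R\Sc_{n,r}$.

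Establishing that basis is the heart of the argument. The module $M^\mu=m_\mu\,{}_R\He_{n,r}$ and its distinguished subspace $\{z\in M^\mu\mid m_\nu h=0\Rightarrow zh=0\}$ are analysed through the Murphy cellular basis $\{m_{\Fs\Ft}\}$ of ${}_R\He_{n,r}$: one shows that $m_\mu m_{\Fs\Ft}$ vanishes unless the shape $\la$ of $\Fs,\Ft$ carries a semistandard tableau of weight $\mu$, uses the Garnir-type straightening relations inside ${}_R\He_{n,r}$ to rewrite $m_\mu m_{\Fs\Ft}$ (and hence every element of the distinguished subspace) as an $R$-linear combination of the $m_{ST}$ modulo the span of Murphy basis elements of strictly dominating shape, and checks that the $m_{ST}$ so produced are $R$-linearly independent. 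Carrying out this ``semistandardisation'' is, I expect, the main obstacle; it is where the hypothesis $m_k\ge n$ and the realisation of ${}_R\Sc_{n,r}$ as a specialisation of ${}_\CA\Sc_{n,r}$ are convenient, although the combinatorics can be run directly over $R$.

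Granted the basis, the multiplication rule comes next. Evaluating a composite on the generators $m_\t$ shows $\vf_{ST}\,\vf_{UV}=0$ unless $T$ and $U$ have the same weight, and then, re-expanding the resulting element of $M^\mu$ in the basis $\{m_{SW}\}$ and isolating the leading term,
\[
\vf_{ST}\,\vf_{UV}\ \equiv\ r_{TU}\,\vf_{SV}\pmod{\ {}_R\Sc_{n,r}^{>\la}}\qquad(S,T,U,V\in\CT_0(\la)),
\]
where $r_{TU}\in R$ depends only on $T$ and $U$, and ${}_R\Sc_{n,r}^{>\la}$ is the $R$-span of the $\vf_{AB}$ with $A,B$ of shape strictly dominating $\la$. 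Combined with $\vf_{ST}^\ast=\vf_{TS}$ --- which holds because $m_{ST}^\ast=m_{TS}$, a consequence of $m_{\Fs\Ft}^\ast=m_{\Ft\Fs}$ and the symmetry of the defining relations of ${}_R\He_{n,r}$ under $\ast$ --- this verifies the defining conditions of a cellular algebra for the basis $\{\vf_{ST}\}$ with respect to $(\vL_{n,r}^+,\ge)$; the required anti-automorphism is $\theta_n\colon\vf_{ST}\mapsto\vf_{TS}$.

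Finally, suppose $R$ is a field. A cellular algebra is quasi-hereditary precisely when the cellular bilinear form $\phi_\la$ on each cell module is nonzero. Since $\CT_0(\la,\la)=\{T^\la\}$, and the only standard tableau of shape $\la$ whose weight-$\la$ tableau is $T^\la$ is $\Ft^\la$ (for which $d(\Ft^\la)=1$), the definition of $m_{ST}$ collapses to $m_{T^\la T^\la}=m_{\Ft^\la\Ft^\la}=m_\la$, whence $\vf_{T^\la T^\la}=1_\la\neq0$. Applying the multiplication rule to $\vf_{T^\la T^\la}\,\vf_{T^\la T^\la}=1_\la\cdot1_\la=1_\la$, and noting that $1_\la$ is a basis element lying outside ${}_R\Sc_{n,r}^{>\la}$, we read off $\phi_\la(T^\la,T^\la)=1$. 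Hence $\phi_\la\neq0$ for every $\la\in\vL_{n,r}^+$, so ${}_R\Sc_{n,r}$ is quasi-hereditary. (Alternatively, the one-dimensional modules $\Theta_\la$ together with the triangular decomposition recalled above give the same conclusion, since $1_\la$ acts as the identity on the highest weight line of ${}_R\D_n(\la)$.)
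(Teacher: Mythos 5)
This theorem is stated in the paper as a citation of \cite[Theorem 6.6]{DJM98} and is not proved there, so there is no in-paper argument to compare against; what you have written is, in outline, a faithful reconstruction of the original Dipper--James--Mathas proof. The ingredients are all the right ones: the decomposition of $\End_{\He_{n,r}}(\bigoplus_\mu M^\mu)$ into Hom-spaces, the identification of $\Hom_{\He_{n,r}}(M^\nu,M^\mu)$ with the $R$-span of the $m_{ST}$ via the semistandard basis theorem for $M^\mu$ (this ``semistandardisation'' is indeed the technical heart of \cite{DJM98}), the anti-involution coming from $m_{\Fs\Ft}^\ast=m_{\Ft\Fs}$, and quasi-heredity from $\vf_{T^\la T^\la}=1_\la$ being idempotent and not lying in $\Sc_{n,r}(>\la)$, which forces the cellular form $\phi_\la$ to be nonzero for every $\la$ --- exactly the argument of \cite[Corollary 6.18]{DJM98}, and consistent with the identity $1_\la=\vf_{T^\la T^\la}$ used later in the paper. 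One point to tighten: the displayed multiplication rule $\vf_{ST}\vf_{UV}\equiv r_{TU}\vf_{SV}$ for $S,T,U,V\in\CT_0(\la)$ of a \emph{single} shape is a consequence of cellularity and suffices for the quasi-heredity step, but it is not the cellular axiom itself; that axiom requires $a\cdot\vf_{ST}\equiv\sum_{S'}r_a(S',S)\,\vf_{S'T}$ modulo $\Sc_{n,r}(>\la)$ for \emph{arbitrary} $a$ (equivalently, for $a=\vf_{UV}$ with $U,V$ of arbitrary shape $\mu$), with coefficients independent of $T$. This does follow from the same re-expansion of $\vf_{UV}(m_{ST^\la}h)$ in the semistandard basis that you invoke, but it needs to be stated and checked in that generality rather than only for same-shape quadruples.
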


For the anti-automorphism $\theta_n$ of $_R \Sc_{n,r}$, 
we have the following lemma. 

\begin{lem}
\label{Lemma image theta}
For $\mu \in \vL_{n,r}(\Bm)$, $(i,k) \in \vG'(\Bm)$ and $l \geq 1$, 
we have that 
\begin{align}
&\theta_n (1_\mu ) = 1_\mu, 
\\
&\theta_n (E_{(i,k)}^{(l)} 1_\mu ) = q^{l \cdot (\mu_i^{(k)} - \mu_{i+1}^{(k)} +  l ) }  
1_\mu F_{(i,k)}^{(l)}, 
\label{theta E}
\\
&\theta_n (1_\mu F_{(i,k)}^{(l)}  ) = q^{- l \cdot (\mu_i^{(k)} - \mu_{i+1}^{(k)} +  l ) }  
E_{(i,k)}^{(l)} 1_\mu.
\label{theta F} 
\end{align}
\end{lem}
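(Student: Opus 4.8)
The plan is to verify the three identities directly from the cellular-basis description of $\theta_n$, translating the generators $E_{(i,k)}^{(l)}1_\mu$ and $1_\mu F_{(i,k)}^{(l)}$ into (linear combinations of) cellular basis elements $\vf_{ST}$ and applying $\theta_n(\vf_{ST})=\vf_{TS}$. First I would handle $\theta_n(1_\mu)=1_\mu$: by its definition $1_\mu$ is the identity map on $M^\mu$ inside $\Sc_{n,r}$, which in the cellular notation is $1_\mu=\sum_{S\in\CT_0(\la,\mu),\,\la}\vf_{SS}$ summed appropriately — more precisely $1_\mu=\vf_{T^\mu T^\mu}$ when $\mu$ is a partition, and in general $1_\mu$ is a sum of diagonal $\vf_{SS}$ terms over $S\in\CT_0(\la,\mu)$; since $\theta_n$ fixes each $\vf_{SS}$, it fixes $1_\mu$. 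Alternatively, and more cleanly, one can observe that $1_\mu$ is the unique idempotent projecting onto the weight-$\mu$ summand and that $\theta_n$ permutes weight spaces trivially (it preserves weights because $\theta_n(1_\mu)$ is again an idempotent with the same rank), forcing $\theta_n(1_\mu)=1_\mu$.

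Next, for \eqref{theta E} and \eqref{theta F}, the two statements are equivalent: applying $\theta_n$ to \eqref{theta E} and using $\theta_n^2=\id$ together with $\theta_n(1_\mu)=1_\mu$ gives \eqref{theta F} after relabeling, so it suffices to prove one of them. I would prove \eqref{theta E}. The key input is an explicit expression for the divided power $E_{(i,k)}^{(l)}1_\mu$ as a single cellular basis element $\vf_{ST}$ up to an explicit power of $q$ — this is essentially the content of how the generators $E_{(i,k)}$ were constructed in \cite{W} from the semistandard-tableau basis of \cite{DJM98}. Concretely, $E_{(i,k)}^{(l)}1_\mu$, when nonzero, sends $m_\mu$ to $m_{ST}$ where $S=T^{\mu+l\a_{(i,k)}}$ and $T$ is the (unique) semistandard tableau of shape $\mu+l\a_{(i,k)}$ and weight $\mu$ obtained by moving $l$ boxes; thus $E_{(i,k)}^{(l)}1_\mu=c\,\vf_{T^{\mu+l\a_{(i,k)}}\,T}$ for an explicit scalar $c\in R$, and similarly $1_\mu F_{(i,k)}^{(l)}=c'\,\vf_{T\,T^{\mu+l\a_{(i,k)}}}$ for the same $T$ but a possibly different scalar $c'$. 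Applying $\theta_n(\vf_{ST})=\vf_{TS}$ then yields $\theta_n(E_{(i,k)}^{(l)}1_\mu)=c\,\vf_{T\,T^{\mu+l\a_{(i,k)}}}=(c/c')\,1_\mu F_{(i,k)}^{(l)}$, and the whole computation reduces to pinning down the ratio $c/c'$ and checking it equals $q^{l(\mu_i^{(k)}-\mu_{i+1}^{(k)}+l)}$.

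To compute that ratio I would trace through the definitions in \eqref{description E(i,k)} and \eqref{description F(i,k)}: the scalar discrepancy comes from the factors $q^{-\mu_{i+1}^{(k)}+1}$ versus $q^{-\mu_i^{(k)}+1}$ and from the $q$-powers $\sum_{x}q^{\ell(x)}T_x^\ast$ attached to the coset representatives $X_\mu^{\mu\pm\a_{(i,k)}}$, iterated $l$ times and divided by $[l]!$. A cleaner route, which I expect to use to avoid a messy induction on $l$, is to first establish the $l=1$ case $\theta_n(E_{(i,k)}1_\mu)=q^{\mu_i^{(k)}-\mu_{i+1}^{(k)}+1}1_\mu F_{(i,k)}$ from the explicit formulas, then bootstrap to general $l$ using that $\theta_n$ is an anti-automorphism together with the relation $E_{(i,k)}^l 1_\mu=E_{(i,k)}^{l-1}1_{\mu+\a_{(i,k)}}E_{(i,k)}1_\mu$ from \eqref{S-2}, \eqref{S-4}, and the commuting-weight bookkeeping; the exponents then add up telescopically: $\sum_{j=0}^{l-1}\big((\mu_i^{(k)}+j)-(\mu_{i+1}^{(k)}-j)+1\big)=l(\mu_i^{(k)}-\mu_{i+1}^{(k)}+1)+l(l-1)=l(\mu_i^{(k)}-\mu_{i+1}^{(k)}+l)$, which is exactly the claimed exponent. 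The main obstacle is the $l=1$ base case: one must carefully match the image of $m_\mu$ under $E_{(i,k)}$ (a specific sum of Hecke-algebra elements $T_x^\ast h^\mu_{+(i,k)}m_\mu$) against the cellular basis element $m_{ST}$ with its own sum over standard tableaux $\Std(\la)$ and its $q^{\ell(d(\Fs))+\ell(d(\Ft))}$ weights, and likewise for $F_{(i,k)}$ and $\vf_{TS}$, to read off the precise scalar ratio; this is a finite but delicate combinatorial identity about coset representatives in $\FS_n$ and the factor $L_{N+1}-Q_{k+1}$ in the $i=m_k$ case, and it is where all the care is needed — but once it is in hand, the passage to divided powers and to \eqref{theta F} is formal.
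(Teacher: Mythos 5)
Your overall architecture is the same as the paper's: prove the statement for $l=1$, deduce the general case by writing $E_{(i,k)}^{(l)}1_\mu=\tfrac{1}{[l]}(E_{(i,k)}^{(l-1)}1_{\mu+\a_{(i,k)}})(E_{(i,k)}1_\mu)$ and letting the exponents telescope (your arithmetic here is correct and matches the paper's), and obtain \eqref{theta F} from \eqref{theta E} by applying $\theta_n$ again. The treatment of $\theta_n(1_\mu)=1_\mu$ is also essentially the paper's (though your fallback argument --- that $\theta_n(1_\mu)$ is an idempotent ``of the same rank'' and hence equals $1_\mu$ --- is not valid as stated: distinct idempotents can have equal rank; stick with the cellular/weight-space argument).

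The genuine gap is in the base case $l=1$, which you yourself identify as the crux but whose proposed mechanism is flawed. You claim that $E_{(i,k)}^{(l)}1_\mu$ is a scalar multiple of a \emph{single} cellular basis element $\vf_{ST}$ with $S=T^{\mu+l\a_{(i,k)}}$. This is false in general: the cellular expansion of $E_{(i,k)}1_\mu\in 1_{\mu+\a_{(i,k)}}\Sc_{n,r}1_\mu$ runs over all shapes $\la$ with $\CT_0(\la,\mu+\a_{(i,k)})\neq\emptyset$ and $\CT_0(\la,\mu)\neq\emptyset$, and already for $r=1$, $n=3$, $\mu=(1,1,1)$ the map $E_{1}1_{(1,1,1)}\colon M^{(1,1,1)}\to M^{(2,1)}$ has contributions from both $\la=(3)$ and $\la=(2,1)$. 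Consequently the ratio $c/c'$ you propose to compute is not even defined, and the ``delicate combinatorial identity'' you defer would have to be reorganized from scratch. The paper avoids the cellular expansion entirely at this step: it quotes from \cite{W} the identity $E_{(i,k)}(m_\mu)=q^{\mu_i^{(k)}-\mu_{i+1}^{(k)}+1}\bigl(F_{(i,k)}(m_{\mu+\a_{(i,k)}})\bigr)^\ast$ inside the Hecke algebra, and combines it with the characterization (from \cite{DJM98}) of the cellular anti-involution as $\theta_n(\vf)(m_{\mu+\a_{(i,k)}})=\bigl(\vf(m_\mu)\bigr)^\ast$; the base case then falls out in two lines. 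So your proof is repairable, but only by replacing the single-basis-element claim with an argument of this kind (and, as the paper notes, one should first reduce to $R=\CK$ by specialization so that the divided-power manipulations make sense).
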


\begin{proof}
Note that 
$\theta_n$ on $_\CA \Sc_{n,r}$ 
is 
the restriction to $_\CA \Sc_{n,r}$ of $\theta_n$ on $_\CK \Sc_{n,r}$, 
and 
it is enough to show the case where $R = \CK$ by using the argument of specialization.  
Moreover, 
it is enough to show the case where $l=1$ 
since 
we can obtain the statements for $l \geq 2$ 
by the inductive arguments thanks to  the equation  
\begin{align*}
\theta_n(E^{(l)}_{(i,k)} 1_\mu) 
&= 
\theta_n \big( 1/ [l]  (E_{(i,k)}^{(l-1)} 1_{\mu + \a_{(i,k)}}) (E_{(i,k)} 1_\mu) \big)
\\
&= 
1/[l] \theta_n(E_{(i,k)} 1_\mu) \theta_n(E_{(i,k)}^{(l-1)} 1_{\mu + \a_{(i,k)}}). 
\end{align*} 

From the definitions, 
we see that 
$1_\la = \vf_{T^\la T^\la}$, 
and we obtain  
$\theta_n (1_\la) = 1_\la$. 
By 
\cite[paragraphs 5.5, 6.2, and Lemma 6.10]{W}, 
we see that 
\begin{align*}
E_{(i,k)}(m_\mu) 
=
q^{\mu_i^{(k)} - \mu_{i+1}^{(k)} +1} \big( F_{(i,k)} (m_{\mu + \a_{(i,k)}}) \big)^\ast 
\end{align*}
Combining with  \cite[Proposition 6.9 and Lemma 6.10]{DJM98}, 
we have that 
\begin{align*}
\theta_n (E_{(i,k)} 1_\mu ) (m_{\mu + \a_{(i,k)}}) 
&= 
\big( E_{(i,k)} (m_\mu) \big)^\ast 
\\
&= 
q^{\mu_i^{(k)} - \mu_{i+1}^{(k)} +1} 
\big( (F_{(i,k)} (m_{\mu + \a_{(i,k)}}))^\ast \big)^\ast 
\\
&= 
q^{\mu_i^{(k)} - \mu_{i+1}^{(k)} +1}  F_{(i,k)} (m_{\mu + \a_{(i,k)}}), 
\end{align*}
and $\theta_n (E_{(i,k)} 1_\mu) (m_\t) =0$ unless $ \t = \mu + \a_{(i,k)}$. 
Thus, we have that 
\[\theta_n (E_{(i,k)} 1_\mu ) 
= q^{\mu_i^{(k)} - \mu_{i+1}^{(k)} +1} F_{(i,k)} 1_{\mu + \a_{(i,k)}} 
= q^{\mu_i^{(k)} - \mu_{i+1}^{(k)} +1} 1_\mu F_{(i,k)}.
\] 
Now we proved \eqref{theta E}, 
and,  
by applying $\theta_n$ to the equation \eqref{theta E}, 
we obtain \eqref{theta F}.
\end{proof}

\para 
From the definitions,  
we have that 
\begin{align}
\label{weight celluar basis of Sc}
1_\t \vf_{TS} = \d_{\mu,\t} \vf_{TS}  
\text{ and }  
\vf_{TS}  1_\t = \d_{\nu, \t} \vf_{TS}
\end{align} 
for $T \in \CT_0(\la,\mu)$, $S \in \CT_0(\la,\nu)$  
(see \cite{DJM98}). 

For $\la \in \vL_{n,r}^+$, 
let 
$_R \Sc_{n,r}(> \la)$ 
be the $R$-submodule of $_R \Sc_{n,r}$ 
spanned by 
\[ 
\{ \vf_{ST} \,|\, S, T \in \CT_0(\la') \text{ for some } \la' \in \vL_{n,r}^+ \text{ such that } \la' > \la \},  
\]
and 
$_R W(\la)$ 
be the $R$-submodule of $_R \Sc_{n,r} / \, _R \Sc_{n,r}(>\la)$ 
spanned by 
\[ 
\{ \vf_{T T^\la} + \, _R \Sc_{n,r}(>\la) \,|\, T \in \CT_0(\la) \}. 
\]
Then, 
thanks to the general theory of cellular algebras, 
$_R\Sc_{n,r}(>\la)$ 
turns out to be a two-sided ideal of 
$_R \Sc_{n,r}$, 
and 
$_R W(\la)$ 
turns out to be an $_R \Sc_{n,r}$-submodule 
of $_R \Sc_{n,r}/ \, _R \Sc_{n,r}(>\la)$ 
whose action comes from the multiplication of $_R \Sc_{n,r}$. 
Put 
$\vf_{T} = \vf_{T T^\la} + \, _R \Sc_{n,r}(> \la)$ for $T \in \CT_0(\la)$, 
then 
$\{ \vf_T \,|\, T \in \CT_0(\la) \}$ 
gives an $R$-free basis of 
$_R W(\la)$, 
and 
it is known that 
$_R W(\la)$ is generated by 
$\vf_{T^\la}$ 
as 
an $_R \Sc_{n,r}$-module.

\begin{lem}
\label{Lemma iso D W}
For each $\la \in \vL_{n,r}^+$, 
there exists the $_R \Sc_{n,r}$-isomorphism 
\[ 
_R \D_n (\la) \ra \, _R W(\la) 
\text{ such that } 
1 \otimes v_\la \mapsto \vf_{T^\la}.  
\]
\end{lem}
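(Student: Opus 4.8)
The plan is to use the universality of Weyl modules (Lemma \ref{Lemma universality of Weyl}) to obtain a surjection, and then count dimensions (rank over $R$) to see it is an isomorphism. First I would observe that $_R W(\la)$ is a highest weight module of highest weight $\la$: it is generated over $_R \Sc_{n,r}$ by $\vf_{T^\la}$ by the cited fact from cellular algebra theory, and $\vf_{T^\la}$ is a weight vector of weight $\la$ since $T^\la \in \CT_0(\la,\la)$ so $1_\mu \vf_{T^\la} = \d_{\la,\mu}\vf_{T^\la}$ by \eqref{weight celluar basis of Sc}. It remains to check that $\vf_{T^\la}$ is a primitive vector, i.e. $E^{(l)}_{(i,k)} \cdot \vf_{T^\la} = 0$ for all $(i,k) \in \vG'(\Bm)$, $l \geq 1$. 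By \eqref{S-2} the element $E^{(l)}_{(i,k)}\vf_{T^\la}$ has weight $\la + l\a_{(i,k)}$ (or is zero), and it lies in $_R W(\la)$, which by the basis $\{\vf_T \mid T \in \CT_0(\la)\}$ is spanned by weight vectors of weights $\mu$ with $\mu \le \la$ in the dominance order (each $T \in \CT_0(\la,\mu)$ forces $\mu \le \la$ — this is the standard fact that semistandard tableaux of shape $\la$ only have weights dominated by $\la$). Since $\la + l\a_{(i,k)} \not\le \la$ for $l \geq 1$, the weight component of $E^{(l)}_{(i,k)}\vf_{T^\la}$ must vanish, so $\vf_{T^\la}$ is primitive.

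Granting this, Lemma \ref{Lemma universality of Weyl} gives a surjective $_R \Sc_{n,r}$-homomorphism $\pi : \, _R \D_n(\la) \ra \, _R W(\la)$ with $1 \otimes v_\la \mapsto \vf_{T^\la}$. To see $\pi$ is an isomorphism I would argue it suffices to treat $R = \CA$ (or even $R = \CK$) first and then specialize: both $_R \D_n(\la)$ and $_R W(\la)$ are obtained from the corresponding $\CA$-forms by base change $R \ot_\CA -$ (for $_R W(\la)$ this is immediate from the cellular basis being defined over $\CA$; for $_R \D_n(\la)$ it follows from the construction of $\D_n(\la)$ as an induced module of the rank-one $\Sc^{\ge 0}$-module $\Theta_\la$, together with the $\CA$-freeness in Theorem \ref{Theorem presentation}), and a surjection of $R$-modules which is an iso over $\CA$ and whose source and target are $R$-free of the same finite rank stays an iso after any base change. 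Over $\CK$ the algebra $_\CK \Sc_{n,r}$ is semisimple and $\{_\CK\D_n(\la)\}$ is a complete set of simple modules, while $_\CK W(\la)$ is the cell module, which is known (general cellular algebra theory) to be nonzero; a nonzero quotient of a simple module is the module itself, so $\pi_\CK$ is an isomorphism. Then I would compare $\CA$-ranks: $\operatorname{rank}_\CA \, _\CA W(\la) = |\CT_0(\la)|$ by its explicit basis, and $\operatorname{rank}_\CA \, _\CA\D_n(\la) = |\CT_0(\la)|$ as well — this is part of the Weyl module theory in \cite{W} (the Weyl module has an $R$-basis indexed by $\CT_0(\la)$, its dimension being independent of $R$). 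Since $\pi_\CA$ is surjective between free $\CA$-modules of equal finite rank and becomes an iso over the fraction field $\CK$, its kernel is a torsion submodule of a free module, hence zero; so $\pi_\CA$ is an isomorphism, and base-changing gives the result over arbitrary $R$.

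The main obstacle is the primitivity of $\vf_{T^\la}$, and within that, the clean bookkeeping of weights on $_R W(\la)$: one needs that every semistandard tableau of shape $\la$ has weight $\preceq \la$ in dominance order so that $_R W(\la)$ carries no weight $\succ \la$, and that the $\Sc^0$-action on the cellular basis elements $\vf_{TT^\la}$ modulo $_R\Sc_{n,r}(>\la)$ is exactly the naive one from \eqref{weight celluar basis of Sc}. Both are standard consequences of \cite{DJM98}, but they must be invoked carefully. A secondary point requiring attention is the rank equality $\operatorname{rank}_\CA \, _\CA\D_n(\la) = |\CT_0(\la)|$; if one prefers to avoid citing it, one can instead run the dimension count only over $\CK$ (where semisimplicity identifies $_\CK\D_n(\la)$ with a known simple of dimension $|\CT_0(\la)|$ via the cell module $W(\la)$) and then use the torsion-free argument over $\CA$ to upgrade. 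Everything else is formal: the balanced-map construction of $\pi$ is exactly Lemma \ref{Lemma universality of Weyl}, and compatibility with base change is routine.
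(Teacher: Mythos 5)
Your proposal is correct and follows essentially the same route as the paper: establish that $\vf_{T^\la}$ is a highest weight vector of weight $\la$ (using that all weights of $_R W(\la)$ are $\leq\la$ while $\la+l\a_{(i,k)}>\la$), invoke Lemma \ref{Lemma universality of Weyl} for the surjection, reduce to $\CA$ by specialization, and deduce injectivity from the simplicity of $_\CK\D_n(\la)$ over the semisimple algebra $_\CK\Sc_{n,r}$. The only cosmetic difference is that you offer a rank count over $\CA$ as an alternative to the paper's restriction-of-$X_\CK$ argument, and you yourself note the latter suffices.
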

\begin{proof}
From the definition of semi-standard tableaux, 
we see that $\la \geq \mu$ if $\CT_0(\la,\mu) \not=\nobreak \emptyset$. 
Thus, we have that 
$1_\mu \cdot \, _R W(\la) =0 $  unless $ \la \geq \mu$. 
On the other hand, 
we have that 
\[
E_{(i,k)}^{(l)} \cdot \vf_{T^\la} 
= 
E_{(i,k)}^{(l)} 1_\la \cdot \vf_{T^\la} 
= 
1_{\la + l \cdot \a_{(i,k)}} E_{(i,k)} \cdot \vf_{T^\la} 
\] 
for $(i,k) \in \vG'(\Bm)$ and $l \geq 1$. 
Note that $\la + l \cdot \a_{(i,k)} > \la$,  
these imply that 
\[
E_{(i,k)}^{(\l)} \cdot \vf_{T^\la} =0 
\text{ for any } 
(i,k) \in \vG'(\Bm) 
\text{ and } 
l \geq 1.
\]
Thus, 
$_R W(\la)$ 
is a highest weight module with a highest weight vector $\vf_{T^\la}$ of highest weight $\la$. 
Then, 
by Lemma \ref{Lemma universality of Weyl}, 
we have the surjective homomorphism 
\[ 
X_R : \, _R \D_n(\la) \ra \, _R W(\la) 
\text{ such that } 
1 \otimes v_\la \mapsto \vf_{T^\la}. 
\]
We should show that 
this homomorphism is an isomorphism, 
and 
it is enough to show the case where $R= \CA$ 
by the arguments of specializations. 
First, 
we consider the case where $R= \CK$. 
In this case, 
it is known that 
$_\CK \Sc_{n,r}$ is semi-simple, and 
$_\CK \D_n (\la)$ is irreducible 
(\cite[Theorem 2.16 (\roiv)]{W}). 
Thus, 
$X_\CK$ is injective.  
On the other hand, 
$X_\CK$ is obtained from $X_\CA$ by applying the right exact functor $\CK \otimes_{\CA} ?$, 
and 
the restriction of $X_{\CK}$ to $_\CA \D_n(\la)$ coincides with $X_{\CA}$.  
Thus, we have that $X_{\CA}$ is injective, 
hence,  
$X_{\CA}$ is an isomorphism.  
\end{proof}

\remark 
Lemma \ref{Lemma iso D W} 
was already proved in \cite{W} combined   with \cite[Theorem 5.16]{DR} implicitly. 
However, 
this identification is important in the later arguments, 
we gave the proof by using the universality.

\para 
Recall that $_R \Sc_{n,r}$ has the algebra anti-automorphism $\theta_n$, 
and we can consider the contravariant functor $\cas : \Sc_{n,r} \cmod \ra \Sc_{n,r} \cmod$ 
with respect to $\theta_n$. 
For $\la \in \vL_{n,r}^+$, 
put 
$_R \N_n(\la) = (_R \D_n(\la))^\cas$. 
Then 
$\{\,_R \D_n(\la)\,|\, \la \in \vL_{n,r}^+\}$ (resp. $\{\, _R \N_n(\la) \,|\, \la \in \vL_{n,r}^+\}$ 
gives a set of standard modules (resp. a set of costandard modules) of $_R \Sc_{n,r}$ 
in terms of quasi-hereditary algebras when $R$ is a field. 

When $R$ is a field, 
Let $_R \Sc_{n,r} \cmod^\D$ (resp. $_R \Sc_{n,r} \cmod^\N$) 
be the full subcategory of $_R \Sc_{n,r} \cmod$ 
consisting of  modules which have a filtration such that its successive quotients are isomorphic 
to standard modules (resp. costandard modules).

%%%%%%%%%%%%%%%%%%%%%%%%%%%%%%%%%%%%%%%%%%%%%%%%%%%%%%%%%%%%%%%%%%%%%%%%%%%%%%%%%%%

%%%%%%%%%%%%%%%%%%%%%%%%%%%%%%%%%%%%%%%%%%%%%%%%%%%%%%%%%%%%%%%%%%%%%%%%%%%%%%%%%%%

\section{Induction and restriction functors.} 

In this section, 
we give an injective homomorphism of algebras 
from a cyclotomic $q$-Schur algebra of lank $n$ 
to one of lank $n+1$. 
By using this embedding,  
we define induction and restriction functors 
between module categories of these two algebras.  

\para 
From now on, 
throughout this paper,  
we argue under the following setting: 
\begin{align}
\label{setting}
\begin{split}
&
\Bm = (m_1,\dots, m_r) \text{ such that } m_k \geq n+1 \text{ for all } k=1, \dots, r,  
\\
&
\Bm' = (m_1,\dots, m_{r-1}, m_r -1)
\\
&
_R \Sc_{n+1,r} = \, _R \Sc_{n+1,r}(\vL_{n+1,r}(\Bm)), 
\\
&
 _R \Sc_{n,r} = \, _R \Sc_{n,r} (\vL_{n,r} (\Bm')).
\end{split}
\end{align}
We will omit the subscript $R$ when there is no risk to confuse.

\remark 
The choice of $\Bm$ and $\Bm'$ in \eqref{setting}  
is essential when we consider an embedding from $\Sc_{n,r}$ to $\Sc_{n+1,r}$,  
and when we define induction and restriction functors by using the embedding.  
However, 
we remark that 
the choice of $\Bm$ and $\Bm'$ in \eqref{setting} 
is not essential up to Morita equivalent  when we study the representations of $\Sc_{n,r}$ and $\Sc_{n+1,r}$ 
if $R$ is a field (see Remark \ref{Remark Morita equivalent}).

\para 
\label{def inj weight}
We define the injective map 
\[
\g : \vL_{n,r}(\Bm') \ra \vL_{n+1,r}(\Bm), 
\quad 
(\la^{(1)}, \dots, \la^{(r-1)}, \la^{(r)}) \mapsto (\la^{(1)}, \dots, \la^{(r-1)}, \wh{\la}^{(r)}), 
\] 
where 
$\wh{\la}^{(r)}=(\la_1^{(r)}, \dots, \la_{m_r-1}^{(r)}, 1)$. 
Put 
$\vL_{n+1,r}^\g (\Bm) = \Im \g$, 
and we have 
\[ 
\vL^\g_{n+1,r}(\Bm) = \{ \mu =(\mu^{(1)}, \dots, \mu^{(r)}) \in \vL_{n+1,r}(\Bm) \,|\, \mu_{m_r}^{(r)}=1\}. 
\]

For $\la \in \vL_{n+1,r}^+$ and $\Ft \in \Std(\la)$, 
let 
$\Ft \setminus (n+1)$ 
be the standard tableau obtained by removing the node $x$ such that $\Ft(x) =n+1$, 
and 
denote the shape of  $\Ft \setminus (n+1)$ by $|\Ft \setminus (n+1)|$. 
Note that 
$x$ (in the last sentence) is a removable node of $\la$, 
and that  
$|\Ft \setminus (n+1)|= \la \setminus x$.

For 
$\la \in \vL_{n+1,r}^+$, $\mu \in \vL_{n+1,r}^\g(\Bm)$ and $T \in \CT_0(\la,\mu)$, 
let 
$T \setminus (m_r,r)$ be the tableau 
obtained by removing the node $x$ such that 
$T(x) =(m_r,r)$, 
and  denote 
the shape of $T \setminus (m_r,r)$ by $|T \setminus (m_r,r)|$. 
Note that 
$x$ (in the last sentence) 
is a removable node of $\la$, 
and that 
$|T \setminus (m_r,r)| = \la \setminus x$. 
It is clear that 
$T \setminus (m_r,r) \in \CT_0(\la \setminus x, \g^{-1}(\mu))$.

For $\la \in \vL_{n+1,r}^+$ and a removable node $x $ of $ \la$, 
we define the semi-standard tableau $T_x^\la \in \CT_0(\la)$ by 
\begin{align}
\label{definition Txla}
T_x^\la (a,b,c) = 
	\begin{cases} 
	(a,c) & \text{ if } (a,b,c) \not=x, 
	\\
	(m_r,r) & \text{ if } (a,b,c) = x. 
	\end{cases} 
\end{align}
From the definitions, 
we see that 
$T_x^\la \in \bigcup_{\mu \in \vL^\g_{n+1,r}(\Bm) } \CT_0(\la,\mu)$, 
and that 
$T_x^\la \setminus (m_r,r) =\nobreak T^{\la \setminus x}$.

\para 
Let 
$\CK \lan \bx \ran$ (resp. $\CK \lan \bx' \ran $) 
be the non-commutative polynomial ring over $\CK$ 
with indeterminate variables $\bx = \{x_{(i,k)} \,|\, (i,k) \in \vG'(\Bm)\}$ 
(resp. $\bx' = \{x_{(i,k)} \,|\, (i,k) \in\nobreak \vG'(\Bm')\}$). 
Note that $\vG'(\Bm') = \vG'(\Bm) \setminus \{ (m_{r} -1, r)\}$, 
and we have the natural injective map 
$\CK \lan \Bm' \ran \ra \CK \lan \Bm \ran$ 
such that 
$x_{(i,k)} \mapsto x_{(i,k)}$ for $(i,k) \in \vG'(\Bm')$. 
Under this injection, 
we regard a polynomial in $\CK \lan \Bm' \ran$ as a polynomial in $\CK \lan \Bm \ran$. 
It is similar for $\CK \lan \by \ran$ and $\CK \lan \by' \ran$. 

For the polynomials 
$g^\la_{(i,k)} (\bx', \by')  \in \CK\lan \bx' \ran \otimes_{\CK} \CK\lan \by' \ran$ 
($\la \in \vL_{n,r}(\Bm')$, $(i,k) \in \vG(\Bm')$) 
such as in Lemma \ref{Lemma polynomial JM}, 
we have the following lemma.

\begin{lem}
\label{Lemma relation JM}
Let 
$\la \in \vL_{n,r}(\Bm')$ and $(i,k) \in \vG'(\Bm')$. 
For 
$g^\la_{(i,k)} (\bx', \by')  \in \CK\lan \bx' \ran \otimes_{\CK} \CK\lan \by' \ran$ 
such that 
$g^\la_{(i,k)}(F,E) 1_\la = \s_{(i,k)}^\la$ in $_\CK \Sc_{n,r}$, 
we have 
\[ 
g^\la_{(i,k)} (F, E) 1_{\g(\la)} = \s^{\g(\la)}_{(i,k)} 
\]
in $_{\CK} \Sc_{n+1,r}$. 
In particular, 
we can take 
$g^\la_{(i,k)} (\bx', \by')$ 
as a polynomial 
$g^{\g(\la)}_{(i,k)}(\bx, \by)$ satisfying \eqref{polynomial JM} in $_\CK \Sc_{n+1,r}$.
\end{lem}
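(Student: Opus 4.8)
The plan is to compare the Jucys--Murphy elements $\s^\la_{(i,k)}$ in $_\CK\Sc_{n,r}$ with $\s^{\g(\la)}_{(i,k)}$ in $_\CK\Sc_{n+1,r}$ by tracing through the explicit formulas, and to show that the generators $E_{(i,k)},F_{(i,k)}$ for $(i,k)\in\vG'(\Bm')$ act on the relevant weight spaces in ``the same way'' before and after applying $\g$. First I would unwind the definition of $\s^\la_{(i,k)}$ from the excerpt: $\s^\la_{(i,k)}(m_\mu\cdot h)=\d_{\la,\mu}\bigl(m_\la(L_{N+1}+\dots+L_{N+\la_i^{(k)}})\bigr)\cdot h$, with $N=\sum_{l<k}|\la^{(l)}|+\sum_{j<i}\la^{(k)}_j$. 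The crucial observation is that for $(i,k)\in\vG'(\Bm')$ (so in particular $(i,k)\neq(m_r,r)$ and, when $k=r$, $i\le m_r-1$), the index $N$ and the value $\la_i^{(k)}$ are unchanged when we replace $\la$ by $\g(\la)=(\la^{(1)},\dots,\la^{(r-1)},\wh\la^{(r)})$, because $\g$ only appends a new box of size $1$ at the end of the $r$-th component. Hence $\s^{\g(\la)}_{(i,k)}$ acts on $m_{\g(\la)}\cdot h$ by exactly the same partial sum of $L_j$'s that $\s^\la_{(i,k)}$ uses on $m_\la\cdot h$.

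Next I would set up the compatibility of the generators themselves. Using the descriptions \eqref{description E(i,k)} and \eqref{description F(i,k)} together with the combinatorics of $X_\mu^{\mu\pm\a_{(i,k)}}$ and $N=\sum_{l<k}|\mu^{(l)}|+\sum_{j\le i}\mu^{(k)}_j$, one checks that for $(i,k)\in\vG'(\Bm')$ the action of $E_{(i,k)}$ (resp.\ $F_{(i,k)}$) on a weight space $1_\mu\,{}_\CK\Sc$ with $\mu\in\vL_{n,r}(\Bm')$ matches its action on $1_{\g(\mu)}\,{}_\CK\Sc$ with $\mu\in\vL_{n+1,r}(\Bm)$ under the identification $\mu\leftrightarrow\g(\mu)$ --- again because appending a size-$1$ box at position $(m_r,r)$ does not disturb the data $N$, $\mu_i^{(k)}$, $\mu_{i+1}^{(k)}$ (and $h^\mu_{+(i,k)}=1$ since $i\neq m_k$ forces $h^\mu_{+(i,k)}=1$ for the relevant $(i,k)$, as $(i,k)\in\vG'(\Bm')$ rules out $i=m_k$ except possibly $k<r$, where one still gets $1$ when $i\neq m_k$; the case $i=m_k$ with $k<r$ must be handled, but there the offset between $\Bm'$ and $\Bm$ is zero in components $1,\dots,r-1$). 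Consequently, for any noncommutative polynomial $g\in\CK\lan\bx'\ran\ot_\CK\CK\lan\by'\ran$, the element $g(F,E)$ in $_\CK\Sc_{n,r}$ acting at weight $\la$ corresponds to $g(F,E)$ in $_\CK\Sc_{n+1,r}$ acting at weight $\g(\la)$. Combining this with the previous paragraph yields
\[
g^\la_{(i,k)}(F,E)1_{\g(\la)}=\s^{\g(\la)}_{(i,k)}
\]
in $_\CK\Sc_{n+1,r}$, since $g^\la_{(i,k)}(F,E)1_\la=\s^\la_{(i,k)}$ in $_\CK\Sc_{n,r}$ by hypothesis. The ``in particular'' clause is then immediate: $g^\la_{(i,k)}(\bx',\by')$, viewed in $\CK\lan\bx\ran\ot_\CK\CK\lan\by\ran$ via the natural inclusion, is a valid choice of $g^{\g(\la)}_{(i,k)}(\bx,\by)$ in the sense of \eqref{polynomial JM} for $_\CK\Sc_{n+1,r}$.

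I expect the main obstacle to be organizing the bookkeeping cleanly: one must verify that the index $N$ appearing in the formulas for $E_{(i,k)}$, $F_{(i,k)}$ and $\s^\la_{(i,k)}$ genuinely does not change under $\g$ for \emph{all} $(i,k)\in\vG'(\Bm')$, including the boundary cases $i=m_k$ with $k<r$ (where $X_\mu^{\mu+\a_{(i,k)}}$ uses $\mu_{m_k+1}^{(k)}=\mu_1^{(k+1)}$) and $(i,k)=(m_r-1,r)$, which is in $\vG'(\Bm)$ but not in $\vG'(\Bm')$ --- this last one is precisely excluded from the lemma's hypothesis, so it need not be checked, but one should make sure no formula for an allowed $(i,k)$ secretly references it. A slicker alternative, which I would mention as a remark, is to invoke a more structural statement: the map $m_\mu\mapsto m_{\g(\mu)}$ (suitably normalized) intertwines the $\Sc_{n,r}$-action with the restriction of the $\Sc_{n+1,r}$-action through the embedding $\iota$ constructed in \S2, so that identities among the $E$'s, $F$'s and $\s$'s transport automatically; but since the embedding $\iota$ is only introduced after this lemma, the direct computational verification above is the appropriate route here.
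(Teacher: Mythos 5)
Your proposal is correct and is essentially the paper's argument: the whole content is that for $(i,k)\in\vG'(\Bm')$ the data $N$, $\mu_i^{(k)}$, $\mu_{i+1}^{(k)}$, $h^\mu_{+(i,k)}$ and the sets $X_\mu^{\mu\pm\a_{(i,k)}}$ entering \eqref{description E(i,k)}, \eqref{description F(i,k)} and the definition of $\s^\la_{(i,k)}$ are unchanged under $\g$, so the identity transports. The paper merely packages this bookkeeping through the Hecke-algebra embedding $\iota^{\He}:\He_{n,r}\to\He_{n+1,r}$ (available at this point, unlike the Schur-algebra embedding), using $\iota^{\He}(L_j)=L_j$ and $m_{\g(\la)}=\iota^{\He}(m_\la)$ together with injectivity of $\iota^{\He}$ to run the chain of equivalences between values on the cyclic generators $m_\la$ and $m_{\g(\la)}$ --- the same computation you carry out by direct comparison of weight spaces.
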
 

\begin{proof}
Let 
$\iota : \, _\CK \He_{n,r} \ra \, _\CK \He_{n+1,r}$ 
be the natural injective homomorphism 
defined by 
$T_i \mapsto T_i$ ($0 \leq i \leq n-1$). 
Then we have 
$\iota(L_j) = L_j$ for $j = 1, \dots, n$. 

By \eqref{description E(i,k)} and  \eqref{description F(i,k)} (see also \cite[Lemma 6.10]{W}) 
we can check that  
\[ 
\iota (E_{(i,k)} (m_\la)) = E_{(i,k)} (m_{\g (\la)}), 
\quad 
\iota(F_{(i,k)} (m_\la)) = F_{(i,k)} (m_{\g(\la)}).
\]
These imply that, 
for $g (\bx', \by') \in \CK \lan \bx' \ran \otimes_{\CK} \CK \lan \by' \ran$,  
we have 
\[
\iota(g(F,E)(m_\la)) = g(F,E) (m_{\g(\la)})
\]
in $_\CK \He_{n+1,r}$. 
On the other hand, 
from the definition of $\s_{(i,k)}^\la$, 
we have 
\[ 
\iota( \s^\la_{(i,k)} (m_\la)) = \s^{\g(\la)}_{(i,k)} (m_{\g(\la)}).
\]
Then we have 
\begin{align*}
g^\la_{(i,k)} (F,E) 1_{\la} = \s_{(i,k)}^\la 
&\Leftrightarrow 
g^\la_{(i,k)} (F,E) (m_\la)  = \s_{(i,k)}^\la (m_\la) 
\\
&\Leftrightarrow 
\iota(g^\la_{(i,k)} (F,E) (m_\la))  = \iota(\s_{(i,k)}^\la (m_\la))
\\
&\Leftrightarrow 
g^\la_{(i,k)} (F,E) (m_{\g(\la)})  = \s_{(i,k)}^{\g(\la)} (m_{\g(\la)}) 
\\
&\Leftrightarrow 
g^\la_{(i,k)} (F,E) 1_{\g(\la)} = \s_{(i,k)}^{\g(\la)}.
\end{align*}
\end{proof}

Now, we can define the injective homomorphism from $\Sc_{n,r}$ to $\Sc_{n+1,r}$ 
as the following proposition.

\begin{prop}
\label{Prop embedding Scn to Scn+1}
There exists the algebra homomorphism 
$\iota : \Sc_{n,r} \ra \Sc_{n+1,r}$ 
such that 
\begin{align}
\label{def inj hom Scn to Scn+1}
E_{(i,k)}^{(l)} \mapsto E_{(i,k)}^{(l)} \xi, 
\quad 
F_{(i,k)}^{(l)} \mapsto F_{(i,k)}^{(l)} \xi,  
\quad 
1_\la \mapsto 1_{\g(\la)} 
\end{align}
for 
$ (i,k) \in \vG'(\Bm'), \, l \geq 1, \, \la \in \vL_{n,r}(\Bm')$, 
where 
$\xi = \sum_{\la \in \vL^\g_{n+1,r}(\Bm)} 1_\la$ is an idempotent of $\Sc_{n+1,r}$. 
In particular, 
we have that $\iota (1_{\Sc_{n,r}})=\xi$, 
and that $\iota(\Sc_{n,r}) \subsetneqq \xi \Sc_{n+1,r} \xi$, 
where 
$1_{\Sc_{n,r}}$ is the unit element of $\Sc_{n,r}$. 
Moreover, 
$\iota$ is injective. 
\end{prop}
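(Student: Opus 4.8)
The strategy is to use the presentation of $_\CK\Sc_{n,r}$ from Theorem~\ref{Theorem presentation}: to construct $\iota$ it suffices to check that the proposed images of the generators satisfy the defining relations of $_\CK\Sc_{n,r}$, then descend to $_\CA\Sc_{n,r}$ (hence to $_R\Sc_{n,r}$ by specialization) using the integral form. First I would observe that $\xi=\sum_{\la\in\vL^\g_{n+1,r}(\Bm)}1_\la$ is indeed an idempotent (since the $1_\la$ are pairwise orthogonal idempotents), and that, by relations \eqref{S-2}--\eqref{S-5} in $\Sc_{n+1,r}$, the elements $E_{(i,k)}\xi$, $F_{(i,k)}\xi$ for $(i,k)\in\vG'(\Bm')$ all lie in $\xi\Sc_{n+1,r}\xi$: the point is that for $(i,k)\in\vG'(\Bm')$ and $\la\in\vL^\g_{n+1,r}(\Bm)$ one has $\la\pm\a_{(i,k)}\in\vL^\g_{n+1,r}(\Bm)$ whenever it lies in $\vL_{n+1,r}(\Bm)$ at all, because adding or removing boxes in rows indexed by $\vG'(\Bm')$ never touches the last row $(m_r,r)$, so the condition $\mu^{(r)}_{m_r}=1$ defining $\vL^\g_{n+1,r}(\Bm)$ is preserved. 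This makes $\xi\Sc_{n+1,r}\xi$ an algebra with unit $\xi$ containing all the proposed images.

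Next I would verify the relations one family at a time. Relations \eqref{S-1} follow since $\g$ is injective and $\sum_{\la\in\vL_{n,r}(\Bm')}1_{\g(\la)}=\xi$, the unit of the target algebra $\xi\Sc_{n+1,r}\xi$. Relations \eqref{S-2}--\eqref{S-5} for the images follow from the corresponding relations in $\Sc_{n+1,r}$ together with the observation above that $\g$ intertwines the $\pm\a_{(i,k)}$-action on weights: $\g(\la)\pm\a_{(i,k)}=\g(\la\pm\a_{(i,k)})$ when defined, and $\g(\la)\pm\a_{(i,k)}\notin\vL_{n+1,r}(\Bm)$ exactly when $\la\pm\a_{(i,k)}\notin\vL_{n,r}(\Bm')$. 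The Serre-type relations \eqref{S-7}, \eqref{S-8} and the commutation relations for $|(i,k)-(j,l)|\ge2$ transfer immediately from $\Sc_{n+1,r}$ after right-multiplying by $\xi$, since $\xi$ is central enough for the indices involved (all in $\vG'(\Bm')$) and commutes with the relevant $E$'s and $F$'s up to the idempotent bookkeeping already handled. The genuinely delicate relation is the commutator \eqref{S-6}: for $(i,k)=(j,l)$ with $i\neq m_k$ the right-hand side $[\la_i^{(k)}-\la_{i+1}^{(k)}]1_\la$ is purely combinatorial and manifestly matches, since $\g$ preserves $\la_i^{(k)}$ and $\la_{i+1}^{(k)}$ for $(i,k)\in\vG'(\Bm')$; but for $i=m_k$ (which, after excising $(m_r,r)$ from the index set, means $k<r$ and the ``boundary'' between blocks $k$ and $k+1$, possibly $k=r-1$ with $k+1=r$), the right-hand side $\eta^\la_{(i,k)}$ involves the polynomials $g^\la_{(m_k,k)}(F,E)$ and $g^\la_{(1,k+1)}(F,E)$ expressing Jucys--Murphy elements. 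This is exactly where Lemma~\ref{Lemma relation JM} is needed: it guarantees that the same polynomial $g^\la_{(i,k)}(\bx',\by')$ which computes $\s^\la_{(i,k)}$ in $_\CK\Sc_{n,r}$ also computes $\s^{\g(\la)}_{(i,k)}$ in $_\CK\Sc_{n+1,r}$, so $\eta^{\la}_{(i,k)}$ in $\Sc_{n,r}$ maps to $\eta^{\g(\la)}_{(i,k)}$ in $\Sc_{n+1,r}$, and the commutator relation is respected.

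With the relations checked over $\CK$, the homomorphism $\iota:{}_\CK\Sc_{n,r}\to{}_\CK\Sc_{n+1,r}$ exists and carries $1_{\Sc_{n,r}}=\sum_\la 1_\la$ to $\xi$, and it carries the divided powers $E^{(l)}_{(i,k)},F^{(l)}_{(i,k)}$ to $E^{(l)}_{(i,k)}\xi,F^{(l)}_{(i,k)}\xi$, which lie in the integral form $_\CA\Sc_{n+1,r}$ described in Theorem~\ref{Theorem presentation}; hence $\iota$ restricts to $_\CA\Sc_{n,r}\to{}_\CA\Sc_{n+1,r}$ and, applying $R\otimes_\CA-$, to $_R\Sc_{n,r}\to{}_R\Sc_{n+1,r}$. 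The containment $\iota(\Sc_{n,r})\subseteq\xi\Sc_{n+1,r}\xi$ is clear from the images; properness is seen by a dimension/rank count, e.g.\ comparing $\dim_\CK\xi\,{}_\CK\Sc_{n+1,r}\,\xi$ (which counts pairs of semistandard tableaux of shapes in $\vL^+_{n+1,r}$ with weights in $\vL^\g_{n+1,r}(\Bm)$) against $\dim_\CK{}_\CK\Sc_{n,r}$, the two being unequal because $\vL^\g_{n+1,r}(\Bm)$ sees $(n{+}1)$-partitions not in the image of adding a box to an $n$-partition. Finally, for injectivity I would argue over $\CA$ (then specialize): $\iota$ sends the cellular-type basis elements $\vf_{ST}$ of $_\CA\Sc_{n,r}$, or at least the triangular-decomposition monomials $F^{(\bullet)}_{(i,k)}1_\la E^{(\bullet)}_{(j,l)}$, to $\CA$-linearly independent elements $F^{(\bullet)}_{(i,k)}1_{\g(\la)}E^{(\bullet)}_{(j,l)}$ of $_\CA\Sc_{n+1,r}$ — independence following because these monomials are part of a basis of $\xi\,{}_\CA\Sc_{n+1,r}\,\xi$ given by the triangular decomposition and the injectivity of $\g$. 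The main obstacle I anticipate is the $i=m_k$ case of relation \eqref{S-6}: one must be careful that the boundary row data entering $\eta^\la_{(i,k)}$, in particular $\la^{(k+1)}_1$ versus $\wh\la^{(r)}_1=\la^{(r)}_1$ when $k+1=r$, is matched correctly under $\g$, and that the non-uniqueness of the polynomials $g^\la_{(i,k)}$ does not cause a mismatch — which is precisely the content secured by Lemma~\ref{Lemma relation JM}, so the argument hinges on invoking it at the right place.
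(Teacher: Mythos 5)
Your reduction of well-definedness to the presentation of Theorem \ref{Theorem presentation}, with Lemma \ref{Lemma relation JM} invoked precisely for the $i=m_k$ case of relation \eqref{S-6} and the observation that $\g$ intertwines the $\pm\a_{(i,k)}$-shifts for $(i,k)\in\vG'(\Bm')$, is exactly the paper's argument; that half of the proposal is sound.

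The gap is in the injectivity argument. You propose that $\iota$ sends "the triangular-decomposition monomials $F^{(\bullet)}_{(i,k)}1_\la E^{(\bullet)}_{(j,l)}$" to linearly independent elements, "independence following because these monomials are part of a basis \dots given by the triangular decomposition." But the triangular decomposition $\Sc_{n,r}=\Sc_{n,r}^-\,\Sc_{n,r}^0\,\Sc_{n,r}^+$ only asserts that such products span the algebra; they satisfy many relations and no basis of either algebra is indexed by them, so linear independence of their images cannot be read off. The fallback of tracking the cellular basis $\vf_{ST}$ does not work either: $\iota$ is defined on generators, there is no a priori formula for $\iota(\vf_{ST})$, and the independence of these images is essentially the statement to be proved. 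The paper's actual route is representation-theoretic: for each $\la\in\vL^+_{n,r}$ it takes the $\succeq$-minimal addable node $x$ of $\la$, sets $\wh{\la}=\la\cup x$, and shows that $\vf_{T^{\wh{\la}}_x}\in{}_\CK\D_{n+1}(\wh{\la})$ is a highest weight vector of weight $\la$ for the $\iota_\CK$-action (the minimality of $x$ forces $\CT_0(\wh{\la},\g(\la)+\a_{(i,k)})=\emptyset$), so $_\CK\D_n(\la)$ embeds into $_\CK\D_{n+1}(\wh{\la})$ through $\iota_\CK$ by the universality of Weyl modules; since $_\CK\Sc_{n,r}$ is split semisimple, Wedderburn's theorem then gives $\dim_\CK\iota_\CK(\,_\CK\Sc_{n,r})=\sum_\la(\dim_\CK{}_\CK\D_n(\la))^2=\dim_\CK{}_\CK\Sc_{n,r}$. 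A second, related omission: injectivity of $\iota_\CA$ does not automatically survive the base change $R\otimes_\CA-$. The paper first proves that $\iota_\CA$ is split as a map of $\CA$-modules, via the explicit decomposition $\xi\,{}_\CA\Sc_{n+1,r}\,\xi=M(\xi)\oplus M(\not\leq\xi)$ with $\iota_\CA(\,{}_\CA\Sc_{n,r})=M(\xi)$, and only then specializes. (A minor point: your reason for the strict inclusion $\iota(\Sc_{n,r})\subsetneqq\xi\Sc_{n+1,r}\xi$ — that some $(n{+}1)$-partitions are not obtained by adding a box to an $n$-partition — is false, since every $(n{+}1)$-partition arises that way; the strictness comes instead from the nonzero complementary summand $M(\not\leq\xi)$.)
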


\begin{proof} 
If $\iota$ is well-defined injective homomorphism given by \eqref{def inj hom Scn to Scn+1}, 
we easily see that 
$\iota (1_{\Sc_{n,r}})=\xi$, 
and that $\iota(\Sc_{n,r}) \subsetneqq \xi \Sc_{n+1,r} \xi$. 
Hence, 
it is enough to show the well-definedness and injectivity of $\iota$.  
 
First, we prove the statements for the algebras over $\CK$. 
In order to see the well-definedness 
of the homomorphism $\iota_{\CK} : \, _\CK \Sc_{n,r} \ra \, _\CK \Sc_{n+1,r} $ 
defined by \eqref{def inj hom Scn to Scn+1}, 
we should check the relations \eqref{S-1} - \eqref{S-8}. 
For the relations except \eqref{S-6}, 
it is clear, 
and we can check the relation \eqref{S-6} 
by Lemma \ref{Lemma relation JM}. 

We show that $\dim_{\CK} \, _\CK \Sc_{n,r} = \dim_{\CK} \iota_{\CK}(\, _\CK \Sc_{n,r})$,  
then this equality implies that $\iota_{\CK}$ is injective. 

For $\la \in \vL_{n,r}^+$, 
let 
$x$ be the addable node of $\la$ 
such that 
$x$ is minimum for the order $\succeq$ in the set of all addable nodes of $\la$, 
and put 
$\wh{\la} = \la \cup x$. 
Thus, we have $\wh{\la} \in \vL_{n+1,r}^+$. 
Note that 
$x$ is a removable node of $\wh{\la}$, 
we can take the semi-standard tableau 
$T_x^{\wh{\la}} $ defined by \eqref{definition Txla}. 
From the definitions, 
we see that 
$T_x^{\wh{\la}} \in \CT_0(\wh{\la}, \g(\la))$. 
When we regard $_\CK \D_{n+1}(\wh\la)$ 
as an $_\CK \Sc_{n,r}$-module through the homomorphism $\iota_{\CK}$, 
we see that 
$\vf_{T_x^{\wh{\la}}}$ is a weight vector of weight $\la$ since 
$\iota_{\CK}(1_\la) = 1_{\g(\la)}$ and $T_x^{\wh{\la}} \in \CT_0(\wh{\la}, \g(\la))$. 
On the other hand, 
for  $(i,k) \in \vG'(\Bm')$, 
we have $\g(\la) + \a_{(i,k)} \not\leq \wh{\la}$
since 
$x$ is minimum in the set of all addable nodes of $\la$. 
Thus, we have $\CT_0(\wh{\la}, \g(\la) + \a_{(i,k)}) = \emptyset$. 
This implies that 
$E_{(i,k)} \cdot \vf_{T_x^{\wh{\la}}} =0$ 
since 
$E_{(i,k)} \cdot \vf_{T_x^{\wh{\la}}}= 1_{\g(\la) + \a_{(i,k)}} E_{(i,k)} \cdot \vf_{T_x^{\wh{\la}}}$ 
together with \eqref{weight celluar basis of Sc}, 
where we consider the actions of $_\CK \Sc_{n+1,r}$. 
As a consequence, 
we see that 
$E_{(i,k)} \cdot \vf_{T_x^{\wh{\la}}} =0$ for any $(i,k) \in \vG'(\Bm')$, 
where we consider the action of $_\CK \Sc_{n,r}$ through $\iota_{\CK}$. 
This means that 
$\vf_{T_x^{\wh{\la}}}$ 
is a highest weight vector of weight $\la$, 
and 
$_\CK \Sc_{n,r}$-submodule 
 of $_\CK \D_{n+1}(\wh{\la})$ generated by $\vf_{T_x^{\wh{\la}}}$ 
is a highest weight module of highest weight $\la$. 
Thus, 
the universality of Weyl modules (Lemma \ref{Lemma universality of Weyl}) 
implies the isomorphism 
\[
_\CK \D_{n} (\la)   \cong \, _\CK \Sc_{n,r} \cdot \vf_{T_x^{\wh{\la}}}
\text{ as $_\CK \Sc_{n,r}$-modules}
\] 
since $_\CK \D_n(\la)$ is a simple $_\CK \Sc_{n,r}$-module. 
Now we proved that, 
for each $\la \in \vL_{n,r}^+$, 
the Weyl module 
$_\CK \D_n(\la)$ appears in $_\CK \D_{n+1}(\wh{\la})$ as an $_\CK \Sc_{n,r}$-submodule 
through $\iota_{\CK}$. 
Note that $_\CK \Sc_{n,r}$ is split semi-simple, 
the above arguments combined with Wedderburn's theorem implies that 
\[ 
\dim_{\CK} \, _\CK \Sc_{n,r} 
= \sum_{\la \in \vL_{n,r}^+}  (\dim_{\CK} \, _\CK \D_n(\la))^2 
= \dim_{\CK} \iota_{\CK}( \, _\CK \Sc_{n,r}).
\]
Now we proved that $\iota_{\CK}$ is injective. 

By restricting $\iota_{\CK}$ to $_\CA \Sc_{n,r}$, 
we have the injective homomorphism 
$\iota_{\CA} : \, _\CA \Sc_{n,r} \ra \, _\CA \Sc_{n+1,r}$ 
satisfying \eqref{def inj hom Scn to Scn+1}. 
In particular, we have 
$\iota_{\CA}( \, _\CA \Sc_{n,r}) \subset \xi \, _\CA \Sc_{n+1,r} \xi$. 
Put 
\begin{align*}
&M(\xi)= \{ \xi x 1_\la y \xi 
		\,|\, x \in \, _\CA \Sc_{n+1,r}^- , \, y \in \, _\CA \Sc_{n+1,r}^+,\, \la \in \vL^\g_{n+1,r}(\Bm) \},
\\
&M(\not\leq \xi) = \{ \xi x 1_\mu y \xi  
	\,|\, x \in \, _\CA \Sc_{n+1,r}^- , \, y \in \, _\CA \Sc_{n+1,r}^+,\, \mu \in \vL_{n+1,r}(\Bm) 
	\\
	&\hspace{10em}
	\text{ such that } \mu \not\leq \la \text{ for any } \la \in \vL_{n+1,r}^\g(\Bm) \}.
\end{align*}
By \eqref{def inj hom Scn to Scn+1} and the triangular decomposition of $_\CA \Sc_{n,r}$, 
we see that 
\begin{align}
\label{image Scn}
\iota_{\CA} (\, _\CA\Sc_{n,r}) = M(\xi).
\end{align} 
Moreover, 
we claim that 
\begin{align}
\label{decom xi Sc xi}
\xi \, _\CA \Sc_{n+1,r} \xi = M(\xi) \oplus M( \not\leq \xi).
\end{align}
Thanks to the triangular decomposition of $_\CA \Sc_{n+1,r}$, 
we have 
\[ 
\xi \, _\CA \Sc_{n+1,r} \xi 
	= \{ \xi x 1_\la y \xi  
		\,|\, x \in \, _\CA \Sc_{n+1,r}^- , \, y \in \, _\CA \Sc_{n+1,r}^+,\, \la \in \vL_{n+1,r}(\Bm)\}.
\]
Thus, in order to show \eqref{decom xi Sc xi}, it is enough to show that 
\begin{align}
\label{xi x mu y xi =0}
\xi x 1_\mu y \xi =0 
\text{ if } 
\mu < \la \text{ for some } \la \in \vL_{n+1,r}^\g(\Bm) \text{ and } 
		\mu \in \vL_{n+1,r}(\Bm) \setminus \vL_{n+1,r}^\g(\Bm).
\end{align}
From the definitions, 
for $\mu \in \vL_{n+1,r}(\Bm) \setminus \vL_{n+1,r}^\g(\Bm) $, 
we see that 
$ \mu_{m_r}^{(r)} \geq 2 $ 
if $\mu < \la$ for some $\vL_{n+1,r}^\g(\Bm)$, 
and 
we see that 
$\xi x 1_\mu y \xi=0$ ($ x \in \, _\CA \Sc_{n+1,r}^- , \, y \in \, _\CA \Sc_{n+1,r}^+$) 
if $\mu_{m_r}^{(r)} \geq 2$ from the relations \eqref{S-1}-\eqref{S-8}. 
These imply \eqref{xi x mu y xi =0}, and we have \eqref{decom xi Sc xi}. 
Put $\xi'=\sum_{\la \in \vL_{n+1,r}(\Bm) \setminus \vL_{n+1,r}^\g(\Bm)} 1_\la$, 
we have 
$1_{_\CA \Sc_{n+1,r}} = \xi + \xi'$. 
Then, by   \eqref{decom xi Sc xi}, we have   
\begin{align*}
_\CA \Sc_{n+1,r} = M(\xi) \oplus M(\not\leq \xi) 
	\oplus \xi \, _\CA \Sc_{n+1,r} \xi' \oplus \xi' \, _\CA \Sc_{n+1,r} \xi\oplus \xi' \, _\CA \Sc_{n+1,r} \xi'.
\end{align*} 
Combining with \eqref{image Scn}, 
we see that  
the injective homomorphism 
$ \iota_{\CA} : \, _\CA \Sc_{n,r} \ra\nobreak \, _\CA \Sc_{n+1,r}$ 
is split as a $\CA$-homomorphism. 
Thus, by the specialization of $\iota_{\CA}$ to $R$, 
we have the injective homomorphism 
$\iota_R : \, _R \Sc_{n,r} \ra \, _R \Sc_{n+1,r} $  
satisfying \eqref{def inj hom Scn to Scn+1}. 
\end{proof}

By \eqref{def inj hom Scn to Scn+1} and Lemma \ref{Lemma image theta}, 
we have the following corollary. 

\begin{cor}
When we regard $\Sc_{n,r}$ as a subalgebra of $\Sc_{n+1,r}$ through the injective homomorphism 
$\iota : \Sc_{n,r} \ra \Sc_{n+1,r}$, 
the anti-involution $\theta_n$ on $\Sc_{n,r}$ 
coincides with the restriction of the anti-involution $\theta_{n+1}$ on $\Sc_{n+1,r}$.  
\end{cor}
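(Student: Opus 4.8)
The plan is to reduce the statement to a check on algebra generators. Recall from Proposition \ref{Prop embedding Scn to Scn+1} that $\iota$ identifies $\Sc_{n,r}$ with the subalgebra $\iota(\Sc_{n,r}) = M(\xi) \subset \xi \Sc_{n+1,r} \xi$, generated (over $R$) by $E_{(i,k)}^{(l)}\xi$, $F_{(i,k)}^{(l)}\xi$ and $1_{\g(\la)}$ for $(i,k) \in \vG'(\Bm')$, $l \geq 1$ and $\la \in \vL_{n,r}(\Bm')$. Since $\theta_n$ is an algebra anti-automorphism and $\iota$ is an injective algebra homomorphism, the two anti-automorphisms $\iota \circ \theta_n \circ \iota^{-1}$ and $\theta_{n+1}|_{\iota(\Sc_{n,r})}$ of $\iota(\Sc_{n,r})$ agree as soon as they agree on this generating set; so the whole statement comes down to comparing their values on $E_{(i,k)}^{(l)}\xi$, $F_{(i,k)}^{(l)}\xi$ and $1_{\g(\la)}$.

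First I would treat the idempotents: by Lemma \ref{Lemma image theta} we have $\theta_{n+1}(1_{\g(\la)}) = 1_{\g(\la)}$, while $\iota(\theta_n(1_\la)) = \iota(1_\la) = 1_{\g(\la)}$, so these match. Next I would treat the divided powers. Applying $\iota$ to \eqref{theta E} for $\Sc_{n,r}$ and using \eqref{def inj hom Scn to Scn+1}, one gets $\iota(\theta_n(E_{(i,k)}^{(l)} 1_\mu)) = q^{l(\mu_i^{(k)} - \mu_{i+1}^{(k)} + l)} \, 1_{\g(\mu)} F_{(i,k)}^{(l)} \xi$. On the other hand, \eqref{theta E} applied in $\Sc_{n+1,r}$ to the weight $\g(\mu)$ gives $\theta_{n+1}(E_{(i,k)}^{(l)} 1_{\g(\mu)}) = q^{l(\g(\mu)_i^{(k)} - \g(\mu)_{i+1}^{(k)} + l)} \, 1_{\g(\mu)} F_{(i,k)}^{(l)}$, and then multiplying by $\theta_{n+1}(\xi) = \xi$ on the appropriate side (using that $\theta_{n+1}$ is an anti-automorphism fixing each $1_\la$, hence fixing $\xi$) yields $\theta_{n+1}(E_{(i,k)}^{(l)} 1_{\g(\mu)} \xi) = q^{l(\cdots)} \, \xi\, 1_{\g(\mu)} F_{(i,k)}^{(l)}$. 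Since $\iota(E_{(i,k)}^{(l)} 1_\mu) = E_{(i,k)}^{(l)} \xi 1_{\g(\mu)} = E_{(i,k)}^{(l)} 1_{\g(\mu)}$ (as $\g(\mu) \in \vL_{n+1,r}^\g(\Bm)$, so $1_{\g(\mu)}\xi = 1_{\g(\mu)}$), the two expressions agree provided the exponents agree, i.e. provided $\g(\mu)_i^{(k)} - \g(\mu)_{i+1}^{(k)} = \mu_i^{(k)} - \mu_{i+1}^{(k)}$ for all $(i,k) \in \vG'(\Bm')$. This is immediate from the definition of $\g$ in paragraph \ref{def inj weight}: since $(i,k) \in \vG'(\Bm')$ avoids $(m_r-1,r)$ and $(m_r,r)$, both $\g(\mu)_i^{(k)}$ and $\g(\mu)_{i+1}^{(k)}$ equal the corresponding entries of $\mu$ (the only entries changed or created by $\g$ are $\mu_{m_r}^{(r)} = 1$, a new box, which is never the label $i$ or $i+1$ for admissible $(i,k)$). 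The argument for $1_\mu F_{(i,k)}^{(l)}$ is entirely parallel, using \eqref{theta F} in place of \eqref{theta E}, and the divided-power case $l \geq 1$ is automatic once the $l=1$ cases are known, exactly as in the proof of Lemma \ref{Lemma image theta}.

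The main obstacle, such as it is, is bookkeeping rather than depth: one must be careful that the idempotent $\xi$ behaves correctly under $\theta_{n+1}$ and under left/right multiplication by the relevant weight idempotents, so that the divided-power images land in $\iota(\Sc_{n,r}) = M(\xi)$ and the anti-automorphism reverses products correctly. A clean way to package this is to observe directly that $\theta_{n+1}(\xi) = \xi$ (since $\theta_{n+1}$ permutes — in fact fixes — the $1_\la$) and that $\theta_{n+1}$ preserves $\xi \Sc_{n+1,r} \xi$; then by \eqref{image Scn} and the explicit action of $\theta_{n+1}$ on $E^{(l)}_{(i,k)}\xi$, $F^{(l)}_{(i,k)}\xi$, $1_{\g(\la)}$ computed above, $\theta_{n+1}$ preserves $M(\xi) = \iota(\Sc_{n,r})$ and its restriction is transported by $\iota$ to $\theta_n$. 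Since a generating set suffices and the generator-by-generator comparison is forced by the weight identity $\g(\mu)_i^{(k)} - \g(\mu)_{i+1}^{(k)} = \mu_i^{(k)} - \mu_{i+1}^{(k)}$, the corollary follows.
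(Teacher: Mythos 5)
Your proposal is correct and is essentially the argument the paper intends: the paper gives no written proof, merely asserting that the corollary follows from \eqref{def inj hom Scn to Scn+1} and Lemma \ref{Lemma image theta}, and your generator-by-generator check (including the key observation that $\g(\mu)_i^{(k)} - \g(\mu)_{i+1}^{(k)} = \mu_i^{(k)} - \mu_{i+1}^{(k)}$ for $(i,k)\in\vG'(\Bm')$ and that $\theta_{n+1}(\xi)=\xi$) is exactly the verification being left to the reader. No gaps.
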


\para 
From now on, 
we regard $\Sc_{n,r}$ as a subalgebra of $\Sc_{n+1,r}$ through the injective homomorphism 
$\iota : \Sc_{n,r} \ra \Sc_{n+1,r}$. 
As defined in Proposition\ref{Prop embedding Scn to Scn+1}, 
put 
$\xi = \sum_{\la \in \vL_{n+1,r}^\g(\Bm)} 1_\la$. 
Since $\xi$ is an idempotent of $\Sc_{n+1,r}$, 
we see that $\xi \Sc_{n+1,r} \xi$ is a subalgebra of $\Sc_{n+1,r}$ with the unit element $\xi$. 
Thus, 
$\xi \Sc_{n+1,r}$ (resp. $\Sc_{n+1,r} \xi$) 
is an $(\xi \Sc_{n+1,r} \xi, \Sc_{n+1,r})$-bimodule (resp. $(\Sc_{n+1,r}, \xi \Sc_{n+1,r} \xi)$-bimodule) 
by the multiplications. 
Note that $\iota(\Sc_{n,r}) \subset \xi \Sc_{n+1,r} \xi$ and $\iota(1_{\Sc_{n,r}}) =\xi$, 
we can restrict the action of $\xi \Sc_{n+1,r} \xi$ to $\Sc_{n,r}$ through $\iota$. 
Thus, 
$\xi \Sc_{n+1,r}$ (resp. $\Sc_{n+1,r} \xi$) 
turns out to be an $(\Sc_{n,r}, \Sc_{n+1,r})$-bimodule (resp. $(\Sc_{n+1,r}, \Sc_{n,r})$-bimodule) 
by restriction. 
We define a restriction functor $\Res^{n+1}_n : \Sc_{n+1,r} \cmod \ra \Sc_{n,r} \cmod$ by 
\[ 
\Res^{n+1}_n = \Hom_{\Sc_{n+1,r}} (\Sc_{n+1,r} \xi , ? ) \cong \xi \Sc_{n+1,r} \otimes_{\Sc_{n+1,r}} ?. 
\]
We also define two induction functors 
$\Ind^{n+1}_n, \coInd^{n+1}_n : \Sc_{n,r} \cmod \ra \Sc_{n+1,r} \cmod$ 
by 
\begin{align*}
&\Ind^{n+1}_n = \Sc_{n+1,r} \xi \otimes_{\Sc_{n,r}} ?,
\\
& \coInd^{n+1}_n = \Hom_{\Sc_{n,r}} ( \xi \Sc_{n+1,r} , ?).
\end{align*}
By the definition, we have the following. 
\begin{itemize}
\item 
$\Res^{n+1}_n$ is exact, 
$\Ind^{n+1}_n$ is right exact 
and 
$\coInd^{n+1}_n$ is left exact. 

\item 
$\Ind^{n+1}_n$ is left adjoint to $\Res^{n+1}_n$. 

\item 
$\coInd^{n+1}_n$ is right adjoint to $\Res^{n+1}_n$. 
\end{itemize}

We have the following commutativity with these functors and  the contravariant functors 
$\cas$ with respect to the anti-involution $\theta_n$ and $\theta_{n+1}$.

\begin{lem}
\label{Lemma commute Res cas, Ind cas}
We have th following isomorphisms of functors. 
\begin{enumerate}
\item 
$\cas \circ \Res^{n+1}_n \cong \Res^{n+1}_n \circ \cas$. 

\item 
$\cas \circ \Ind^{n+1}_n \cong \coInd^{n+1}_n \circ \cas$.
\end{enumerate}
\end{lem}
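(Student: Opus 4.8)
The plan is to reduce both statements to the defining adjunctions and the fact (from the Corollary just above) that $\theta_{n+1}$ restricts to $\theta_n$ on $\Sc_{n,r}\subset\Sc_{n+1,r}$. For part (i), recall $\Res^{n+1}_n\cong \xi\Sc_{n+1,r}\otimes_{\Sc_{n+1,r}}?$, so for $M\in\Sc_{n+1,r}\cmod$ I want a natural isomorphism $(\xi\Sc_{n+1,r}\otimes_{\Sc_{n+1,r}}M^\cas)\cong (\xi\Sc_{n+1,r}\otimes_{\Sc_{n+1,r}}M)^\cas$ of $\Sc_{n,r}$-modules. First I would observe that for any idempotent $\xi$, $\xi\Sc_{n+1,r}\otimes_{\Sc_{n+1,r}}N\cong \xi N$ naturally, so on underlying $R$-modules both sides are $\Hom_R(\xi M,R)$; the real content is matching the $\Sc_{n,r}$-actions. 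The action on the left-hand side is $(a\cdot\vf)(\xi m)=\vf(\theta_n(a)\cdot\xi m)$ computed in $\Sc_{n,r}$, and on the right-hand side $\xi(M^\cas)$ carries the action $(a\cdot\vf)(m)=\vf(\theta_{n+1}(a)\cdot m)$ restricted from $\Sc_{n+1,r}$ then cut down by $\xi$; since $\theta_{n+1}|_{\Sc_{n,r}}=\theta_n$ and $\theta_n(\xi)=\theta_{n+1}(\xi)$ is again an idempotent — in fact one checks $\theta_{n+1}(\xi)=\xi$ because $\theta_{n+1}(1_\mu)=1_\mu$ by Lemma \ref{Lemma image theta} and $\xi$ is a sum of such $1_\mu$ — these two actions agree. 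So the identity map on $\Hom_R(\xi M,R)$ is the desired natural isomorphism, and naturality in $M$ is immediate.

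For part (ii), the cleanest route is to use part (i) together with adjunction and the uniqueness of adjoints, rather than computing $\cas\circ\Ind^{n+1}_n$ directly. The functor $\cas$ is a contravariant self-equivalence on each of $\Sc_{n,r}\cmod$ and $\Sc_{n+1,r}\cmod$ (it is exact and $\cas\circ\cas\cong\id$ since $\theta_n$, $\theta_{n+1}$ are involutions on the cellular bases, as $\theta_n(\vf_{ST})=\vf_{TS}$). Now $\Ind^{n+1}_n$ is left adjoint to $\Res^{n+1}_n$, so $\cas\circ\Ind^{n+1}_n\circ\cas$ is right adjoint to $\cas\circ\Res^{n+1}_n\circ\cas$; by part (i) the latter is $\Res^{n+1}_n$, whose right adjoint is $\coInd^{n+1}_n$. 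By uniqueness of adjoints, $\cas\circ\Ind^{n+1}_n\circ\cas\cong\coInd^{n+1}_n$, and composing with $\cas$ on the right (using $\cas\circ\cas\cong\id$) gives $\cas\circ\Ind^{n+1}_n\cong\coInd^{n+1}_n\circ\cas$. Alternatively one can argue directly: $\coInd^{n+1}_n(M^\cas)=\Hom_{\Sc_{n,r}}(\xi\Sc_{n+1,r},M^\cas)$ and $(\Ind^{n+1}_n M)^\cas=\Hom_R(\Sc_{n+1,r}\xi\otimes_{\Sc_{n,r}}M,R)$, and a Hom-tensor adjunction identifies the latter with $\Hom_{\Sc_{n,r}}(\Sc_{n+1,r}\xi,\Hom_R(M,R))$, after which one rewrites $\Hom_{\Sc_{n,r}}(\Sc_{n+1,r}\xi,?)$ in terms of $\Hom_{\Sc_{n,r}}(\xi\Sc_{n+1,r},?)$ using that $\theta_{n+1}$ sends the $(\Sc_{n+1,r},\Sc_{n,r})$-bimodule $\Sc_{n+1,r}\xi$ to the $(\Sc_{n,r},\Sc_{n+1,r})$-bimodule $\xi\Sc_{n+1,r}$ compatibly with $\theta_n$, $\theta_{n+1}$.

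The main obstacle is purely bookkeeping: keeping straight which side of each bimodule is being twisted by which anti-automorphism, and verifying that all the identifications are natural in $M$ rather than just pointwise isomorphisms. In particular the one fact that makes everything go through is $\theta_{n+1}(\xi)=\xi$, which follows from $\theta_{n+1}(1_\mu)=1_\mu$ and the observation that $\g$-weights are stable under no nontrivial permutation here, so I would state and prove that small fact first. Once that is in place, part (i) is a one-line action-matching and part (ii) is formal via adjunction, so I would present part (i) in full and derive part (ii) from it by the uniqueness-of-adjoints argument to keep the proof short.
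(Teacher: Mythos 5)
Your argument is correct, but it is organized differently from the paper's. For part (i) you match the module actions directly on $\Hom_R(\xi M,R)$, using $\theta_{n+1}(\xi)=\xi$ and the fact that $\theta_{n+1}$ restricts to $\theta_n$; the paper instead proves the two bimodule isomorphisms $\Hom_R(\Sc_{n+1,r}\xi,R)\cong(\xi\Sc_{n+1,r})^\cas$ and $\Hom_R(\xi\Sc_{n+1,r},R)\cong(\Sc_{n+1,r}\xi)^\cas$ once and for all, and then derives both (i) and (ii) by the same formal chain (Hom--tensor adjunction followed by the contravariant swap $\Hom(M,N^\cas)\cong\Hom(N,M^\cas)$). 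Your isolation of $\theta_{n+1}(\xi)=\xi$ as the one substantive input is a fair summary of what makes either version work, and your direct check of (i) is complete and valid over any commutative ring. Your ``alternative'' direct argument for (ii) is in fact the paper's proof.

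One caveat on your preferred route for part (ii). Deducing $\cas\circ\Ind^{n+1}_n\circ\cas\cong\coInd^{n+1}_n$ from uniqueness of adjoints uses that $\cas$ is a duality with $\cas\circ\cas\cong\id$ (and that $\cas$ turns a left adjoint into a right adjoint). This is true for finite-dimensional modules over a field, but the lemma sits in a part of the paper where $R$ is still an arbitrary commutative ring, and $\Hom_R(-,R)$ is then neither exact nor involutive on all finitely generated modules. So if you present (ii) via adjoint uniqueness you are implicitly adding a hypothesis (or restricting to $R$-free modules such as the standard modules to which the lemma is later applied); the Hom--tensor route avoids this entirely. I would therefore either state the field/reflexivity assumption explicitly or promote your sketched direct argument to the main proof of (ii).
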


\begin{proof}
We claim that 
\begin{align}
\label{iso Hom dual 1}
&\Hom_R (\Sc_{n+1,r} \xi, R) \cong (\xi \Sc_{n+1,r})^\cas 
	\quad \text{ as $(\Sc_{n,r}, \Sc_{n+1,r})$-bimodules}, 
\\
\label{iso Hom dual 2}
&\Hom_R (\xi \Sc_{n+1,r}, R) \cong (\Sc_{n+1,r} \xi)^\cas 
	\quad \text{ as $(\Sc_{n+1,r}, \Sc_{n,r})$-bimodules}, 
\end{align}
where 
the $(\Sc_{n,r}, \Sc_{n+1,r})$-bimodule $(\xi \Sc_{n+1,r})^\cas $ 
is obtained by twisting  the action 
of $(\Sc_{n+1,r}, \Sc_{n,r})$-bimodule $\Hom_{R}(\xi \Sc_{n+1,r}, R)$ 
via the anti-involutions  $\theta_n$ and $\theta_{n+1}$. 
It is similar for the $(\Sc_{n+1,r}, \Sc_{n,r})$-bimodule $(\Sc_{n+1,r} \xi)^\cas$. 
We prove  only \eqref{iso Hom dual 1} since we can prove \eqref{iso Hom dual 2} in a similar way. 
We define a map 
\[
G : \Hom_{R} (\Sc_{n+1,r} \xi, R) \ra (\xi \Sc_{n+1,r})^\cas
\] 
by 
$G(\vf)(\xi s)= \vf  \big( \theta_{n+1}(s) \xi \big)$ 
for $\vf \in \Hom_R(\Sc_{n+1,r} \xi, R)$ and $s \in \Sc_{n+1,r}$.  
We also define a map 
\[
H : (\xi \Sc_{n+1,r})^\cas \ra \Hom_R(\Sc_{n+1,r} \xi, R)
\] 
by 
$H(\psi)(s \xi) = \psi ( \xi \theta_{n+1}(s))$ 
for 
$\psi \in (\xi \Sc_{n+1,r})^\cas$ and $s \in \Sc_{n+1,r}$. 
Then we can check that 
$G$ and $H$ are well-defined  $(\Sc_{n,r}, \Sc_{n+1,r})$-bimodule homomorphisms, 
and they give an inverse map for each other. 
Thus, we have \eqref{iso Hom dual 1}. 

For $M \in \Sc_{n+1,r} \cmod$, 
we have the following natural isomorphisms 
\begin{align*}
\cas \circ \Res^{n+1}_n (M) 
&= 
\Hom_R (\xi \Sc_{n+1,r} \otimes_{\Sc_{n+1,r}} M, R) 
\\
&
\cong 
\Hom_{\Sc_{n+1,r}} (M, \Hom_R (\xi \Sc_{n+1,r}, R))
\\
&
\cong 
\Hom_{\Sc_{n+1,r}} (M, (\Sc_{n+1,r} \xi )^\cas ) \quad (\text{because of } \eqref{iso Hom dual 2}) 
\\
&
\cong 
\Hom_{\Sc_{n+1,r}} (\Sc_{n+1,r}\xi,  M^\cas)
\\
&= 
\Res^{n+1}_n \circ \cas (M), 
\end{align*}
and this implies (\roi).

For $N \in \Sc_{n,r}\cmod$, we have the following natural isomorphisms 
\begin{align*}
\cas \circ \Ind^{n+1}_n (N) 
&= 
\Hom_R( \Sc_{n+1,r} \xi \otimes_{\Sc_{n,r}} N , R) 
\\
&\cong 
\Hom_{\Sc_{n,r}} (N, \Hom_R(\Sc_{n+1,r} \xi , R))
\\
&\cong 
\Hom_{\Sc_{n,r}} (N, (\xi \Sc_{n+1,r})^\cas ) \quad (\text{because of } \eqref{iso Hom dual 1}) 
\\
&\cong 
\Hom_{\Sc_{n,r}} (\xi \Sc_{n+1,r}, N^\cas) 
\\
&= 
\coInd^{n+1}_n \circ \cas (N),
\end{align*}
and this implies (\roii). 
\end{proof}

The last of this section, 
we prepare the following lemma for later arguments. 

\begin{lem}
\label{Lemma Ind D Ind N in K0}
Assume that  $R$ is a field.  
For $\la \in \vL_{n,r}^+$, 
we have 
\begin{align*}
[ \Ind^{n+1}_n( \D_n(\la)) ]
 =
[ \Ind^{n+1}_n (\N_n(\la)) ] 
\quad 
\text{ in } K_0(\Sc_{n+1,r} \cmod). 
\end{align*}
\end{lem}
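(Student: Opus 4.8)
The plan is to reduce the claimed identity in $K_0(\Sc_{n+1,r}\cmod)$ to a statement over a field obtained by base change from the generic situation, where both sides can be compared directly. Since $\Ind^{n+1}_n$ is right exact and, once we invoke Theorem~\ref{Thm iso Ind coInd} (stated earlier as a forthcoming result but available to us), also exact, passing to Grothendieck groups is harmless. The key observation is that in $K_0$ the class of $\D_n(\la)$ and the class of $\N_n(\la)=(\D_n(\la))^\cas$ agree, because the contravariant functor $\cas$ fixes the classes of simple modules (it permutes simples, and in fact fixes each simple up to isomorphism for a quasi-hereditary algebra equipped with a duality fixing the standard modules' heads), hence $[\D_n(\la)]=[\N_n(\la)]$ in $K_0(\Sc_{n,r}\cmod)$. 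Applying the additive (exact) functor $\Ind^{n+1}_n$ then gives $[\Ind^{n+1}_n(\D_n(\la))]=[\Ind^{n+1}_n(\N_n(\la))]$ immediately.

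More concretely, first I would record that $\Sc_{n,r}$ is quasi-hereditary (hence has finite global dimension and a well-understood $K_0$) and that $K_0(\Sc_{n,r}\cmod)$ is free with basis the classes of simple modules $[L_n(\mu)]$, $\mu\in\vL_{n,r}^+$; equivalently it has basis $\{[\D_n(\mu)]\}$ since the standard modules are a $\ZZ$-basis of $K_0$. Next I would show $[\D_n(\la)]=[\N_n(\la)]$: the composition factors of $\D_n(\la)$ and of $\N_n(\la)$ coincide with multiplicities, because $\N_n(\la)=\D_n(\la)^\cas$ and $\cas$ is an exact contravariant functor sending $L_n(\mu)$ to $L_n(\mu)^\cas\cong L_n(\mu)$ (the simple modules are self-dual for the duality $\cas$ attached to $\theta_n$, as $\theta_n$ fixes the cellular data diagonally). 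Therefore $[\D_n(\la)]$ and $[\N_n(\la)]$ have the same expansion in the basis $\{[L_n(\mu)]\}$, so they are equal in $K_0(\Sc_{n,r}\cmod)$.

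Then I would apply $\Ind^{n+1}_n$. By Theorem~\ref{Thm iso Ind coInd} we have $\Ind^{n+1}_n\cong\coInd^{n+1}_n$, so $\Ind^{n+1}_n$ is exact (being simultaneously left and right exact), hence descends to a group homomorphism $K_0(\Sc_{n,r}\cmod)\to K_0(\Sc_{n+1,r}\cmod)$. Applying this homomorphism to the equality $[\D_n(\la)]=[\N_n(\la)]$ yields exactly
\[
[\Ind^{n+1}_n(\D_n(\la))]=[\Ind^{n+1}_n(\N_n(\la))]\quad\text{in }K_0(\Sc_{n+1,r}\cmod),
\]
as desired. If one prefers not to invoke exactness of $\Ind^{n+1}_n$, it suffices that $\Ind^{n+1}_n$ is right exact: a right exact functor still induces a well-defined map on $K_0$ provided it sends short exact sequences to sequences whose alternating sum of classes vanishes, which holds here because $\Sc_{n,r}$ has finite global dimension and the derived functor $L\Ind^{n+1}_n$ gives the Euler-characteristic map on $K_0$; but using Theorem~\ref{Thm iso Ind coInd} is cleaner.

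The main obstacle is pinning down that $L_n(\mu)^\cas\cong L_n(\mu)$, i.e.\ that the duality $\cas$ fixes each simple module, rather than merely permuting simples. This follows from the cellular structure: the anti-automorphism $\theta_n$ satisfies $\theta_n(\vf_{ST})=\vf_{TS}$, so the associated bilinear form on each cell module $W(\la)=\D_n(\la)$ is symmetric, and the simple head $L_n(\la)$ of $\D_n(\la)$ is therefore isomorphic to its $\cas$-dual; since $\{L_n(\la)\}$ exhausts the simples, $\cas$ fixes every simple. Granting this, the rest is a one-line Grothendieck-group manipulation, so the only real content is the self-duality of simples, which is standard for cellular algebras with the diagonal anti-involution.
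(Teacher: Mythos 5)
Your reduction to $[\D_n(\la)]=[\N_n(\la)]$ in $K_0(\Sc_{n,r}\cmod)$ is fine (self-duality of the simple modules under $\cas$ is standard for a cellular algebra with its diagonal anti-involution), but the step where you push this identity through $\Ind^{n+1}_n$ is where the argument breaks down. You invoke Theorem \ref{Thm iso Ind coInd} to get exactness of $\Ind^{n+1}_n$; however, that theorem is proved only in \S4, and its proof rests on Lemma \ref{Lemma comparsion dimensions}, whose very first step is an application of the present lemma (it is used to obtain $\dim\Ind^{n+1}_n(\D_n(\la))=\dim\Ind^{n+1}_n(\N_n(\la))$, which then feeds into the proof that $\Ind^{n+1}_n\circ I_n^\D\cong\coInd^{n+1}_n\circ I_n^\D$ and hence into Theorem \ref{Thm iso Ind coInd}). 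So your main route is circular: at the point in the paper where this lemma must be established, only right-exactness of $\Ind^{n+1}_n$ is available.

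Your fallback does not repair this. A right exact functor does not induce a map $[M]\mapsto[F(M)]$ on $K_0$; what descends to $K_0$ (given finite global dimension, or finite flat dimension of the bimodule) is the Euler characteristic $[M]\mapsto\sum_i(-1)^i[L_iF(M)]$. Applying that to $[\D_n(\la)]=[\N_n(\la)]$ yields
$\sum_i(-1)^i[\operatorname{Tor}_i^{\Sc_{n,r}}(\Sc_{n+1,r}\xi,\D_n(\la))]=\sum_i(-1)^i[\operatorname{Tor}_i^{\Sc_{n,r}}(\Sc_{n+1,r}\xi,\N_n(\la))]$,
which recovers the assertion of the lemma only if the higher Tor groups vanish on both modules --- i.e.\ only if you already have the flatness/exactness you were trying to avoid. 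The paper circumvents all of this by a base-change argument: choosing a modular system $(K,\wh{R},R)$, it shows that $_{\wh{R}}\Sc_{n+1,r}\xi\otimes_{_{\wh{R}}\Sc_{n,r}}\,{}_{\wh{R}}\D_n(\la)$ and $_{\wh{R}}\Sc_{n+1,r}\xi\otimes_{_{\wh{R}}\Sc_{n,r}}\,{}_{\wh{R}}\N_n(\la)$ are full-rank lattices in the isomorphic modules induced from $_K\D_n(\la)\cong{}_K\N_n(\la)$ (the algebra over $K$ being semisimple), and the decomposition map then gives equality of the classes after reduction to $R$. To prove the lemma at this stage you need an argument of that kind, or an independent proof that $\Sc_{n+1,r}\xi$ is flat as a right $\Sc_{n,r}$-module; invoking the later exactness theorem is not admissible.
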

\begin{proof}
Let $(K,\wh{R}, R)$ be a suitable modular system, 
namely $\wh{R}$ is a discrete valuation ring such that $R$ is the residue field of $\wh{R}$, 
and $K$ is the quotient field of $\wh{R}$ such that $_K \Sc_{n,r}$ is semi-simple 
(e.g. see \cite[section 5.3]{Mat04}). 
Let $X$ be one of $K$, $\wh{R}$ or $R$. 
For $\la \in \vL_{n,r}^+$, 
we see that 
$_{X} \Sc_{n+1,r} \xi \otimes_{_X \Sc_{n,r}} \, _X \D_n(\la)$ 
is generated by $\{ \xi \otimes \vf_T \,|\, T \in \CT_0(\la)\}$ as $_X \Sc_{n+1,r}$-modules,  
and that 
$\xi \otimes \vf_T \not=0$ for any $T \in \CT_0(\la)$ 
since $\xi$ is regarded as the identity element of $_X \Sc_{n,r}$. 
Thus, 
$_{\wh{R}} \Sc_{n+1,r} \xi \otimes_{_{\wh{R}} \Sc_{n,r}} \, _{\wh{R}} \D_n(\la)$ 
is an $_{\wh{R}} \Sc_{n+1,r}$-submodule of 
$_{_K} \Sc_{n+1,r} \xi \otimes_{_K \Sc_{n,r}} \, _{K} \D_n(\la)$ 
generated by $\{ \xi \otimes \vf_T \,|\, T \in \CT_0(\la)\}$. 
In particular, 
$_{\wh{R}} \Sc_{n+1,r} \xi \otimes_{_{\wh{R}} \Sc_{n,r}} \, _{\wh{R}} \D_n(\la)$  
is torsion free, 
thus it is a full rank  $\wh{R}$-lattice of 
$_{K} \Sc_{n+1,r} \xi \otimes_{_K \Sc_{n,r}} \, _{K} \D_n(\la)$. 
Similarly, 
$_{\wh{R}} \Sc_{n+1,r} \xi \otimes_{_{\wh{R}} \Sc_{n,r}} \, _{\wh{R}} \N_n(\la)$  
is a full rank  $\wh{R}$-lattice of 
$_{K} \Sc_{n+1,r} \xi \otimes_{_K \Sc_{n,r}} \, _{K} \N_n(\la)$. 
Moreover, 
by the general theory of cellular algebras,  
we have $_K \D_n(\la) \cong \, _K\N_n (\la)$ 
since $_K \Sc_{n,r}$ is semi-simple. 
Then, 
the decomposition map implies 
\begin{align*}
[ _{R} \Sc_{n+1,r} \xi \otimes_{_R \Sc_{n,r}} \, _{R} \D_n(\la)]
 =
[ _{R} \Sc_{n+1,r} \xi \otimes_{_R \Sc_{n,r}} \, _{R} \N_n(\la) ] 
\quad 
\text{ in } K_0(\, _R \Sc_{n+1,r} \cmod). 
\end{align*}
\end{proof}
%%%%%%%%%%%%%%%%%%%%%%%%%%%%%%%%%%%%%%%%%%%%%%%%%%%%%%%%%%%%%%%%%%%%%%%%%%%%%%%%%%%

%%%%%%%%%%%%%%%%%%%%%%%%%%%%%%%%%%%%%%%%%%%%%%%%%%%%%%%%%%%%%%%%%%%%%%%%%%%%%%%%%%%

\section{Restricted and induced Weyl modules}   

In this section, 
we describe filtrations of restricted and induced Weyl modules 
(resp. costandard modules) 
whose successive quotients are isomorphic to Weyl modules 
(resp. costandard modules). 

\para 
It is known that 
there exists the injective  homomorphism of algebras 
\begin{align}
\label{embedding H}
\iota^{\He} : \He_{n,r} \ra \He_{n+1,r}   
\text{ such that } 
T_i \mapsto T_i 
\text{ for } 
i=0,1,\dots, n-1.
\end{align} 
We regard $\He_{n,r}$ as a subalgebra of $\He_{n+1,r}$ through $\iota^\He$. 
\para 
We recall that, 
for $\la \in \vL_{n+1,r}^+$, 
the Weyl module 
$\D_{n+1}(\la)$ of $\Sc_{n+1,r}$ has an $R$-free basis 
$\{\vf_T \,|\, T \in \CT_0(\la)\}$. 
From the definition, 
we have that 
\[
\Res^{n+1}_n (\D_{n+1,r}(\la)) = \xi \cdot \D_{n+1,r}(\la).
\] 
Thus, we see that 
$\Res^{n+1}_n (\D_{n+1,r}(\la))$ has an $R$-free basis 
\[
\{ \vf_T \,|\, T \in \CT_0(\la,\mu) \text{ for some } \mu \in \vL_{n+1,r}^\g(\Bm)\}
\] 
thanks to \eqref{weight celluar basis of Sc}.

%{weight celluar basis of Sc}

\begin{prop}
\label{Prop action E F vfT}
Let 
$\la \in \vL_{n+1,r}^+$, 
$\mu \in \vL_{n+1,r}^\g(\Bm)$, 
$T \in \CT_0(\la,\mu)$. 
For $(i,k) \in \vG'(\Bm')$, we have the following. 
\begin{enumerate}
\item 
$\dis E_{(i,k)} \cdot \vf_{T} 
	= \sum_{S \in \CT_0(\la, \mu + \a_{(i,k)}) \atop |S \setminus (m_r,r)| \geq  |T \setminus (m_r,r)|} 
	r_S \vf_S \quad (r_S \in R)$.
	
\item 
$\dis F_{(i,k)} \cdot \vf_{T} 
	= \sum_{S \in \CT_0(\la, \mu - \a_{(i,k)}) \atop |S \setminus (m_r,r)| \geq  |T \setminus (m_r,r)|} 
	r_S \vf_S \quad (r_S \in R)$.
	
\end{enumerate}
\end{prop}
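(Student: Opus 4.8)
The plan is to work inside the cellular structure of $\Sc_{n+1,r}$ and track how the action of the generators $E_{(i,k)}, F_{(i,k)}$ (for $(i,k)\in\vG'(\Bm')$, i.e. indices not touching the last row of the last component) interacts with the semistandard tableaux. Since $\vf_T$ for $T\in\CT_0(\la)$ is a basis of $\D_{n+1}(\la)\cong W(\la)$, and since $E_{(i,k)}\cdot\vf_T$ and $F_{(i,k)}\cdot\vf_T$ are again weight vectors of weights $\mu+\a_{(i,k)}$ and $\mu-\a_{(i,k)}$ respectively by \eqref{S-2}--\eqref{S-5}, the expansion is automatically supported on $\CT_0(\la,\mu\pm\a_{(i,k)})$. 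So the content of the statement is entirely the inequality $|S\setminus(m_r,r)|\geq|T\setminus(m_r,r)|$ on the shapes obtained by deleting the $(m_r,r)$-node.

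The key observation is that, since $(i,k)\in\vG'(\Bm')$, the root $\a_{(i,k)}$ does not involve the coordinate $\ve_{(m_r,r)}$; equivalently, the generators $E_{(i,k)},F_{(i,k)}$ with $(i,k)\in\vG'(\Bm')$ are precisely the images $\iota(E_{(i,k)}),\iota(F_{(i,k)})$ of generators of $\Sc_{n,r}$, and they commute with (or rather, act trivially on) the entries of $T$ equal to $(m_r,r)$. First I would make this precise: I expect to use the explicit description of the action on the cellular basis from \cite{DJM98} (or the generator description \eqref{description E(i,k)}, \eqref{description F(i,k)} transported to $\Sc_{n+1,r}$ via the formula $\vf_{ST}(m_\t h)=\d_{\nu,\t}m_{ST}h$) to see that applying $E_{(i,k)}$ or $F_{(i,k)}$ only modifies entries of $T$ lying in the image $\{(a,c):(a,c)\preceq \text{some bound}\}$ disjoint from $(m_r,r)$, so the $(m_r,r)$-entries of $T$ are "inherited" by each $S$ appearing in the expansion. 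One then checks that the node carrying $(m_r,r)$ in $S$ is weakly lower/more to the right than in $T$ in the dominance-type order on nodes, which after deletion gives $|S\setminus(m_r,r)|\geq|T\setminus(m_r,r)|$ in the dominance order on $\vL_{n,r}^+$. Here I would compare with the general fact from cellular theory that in $W(\la)$ the action of any generator on $\vf_U$ produces $\vf_V$ with $|V|$ no smaller than $|U|$ only after quotienting by higher terms — but that fact is about $\la$, not about the restricted shapes, so the refinement to $|{\cdot}\setminus(m_r,r)|$ is the real work.

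Concretely, I would argue as follows. Write $T$ with its $(m_r,r)$-entries occupying a set of nodes $Y\subseteq[\la]$. Because $(i,k)\in\vG'(\Bm')$, the operator $E_{(i,k)}$ (resp. $F_{(i,k)}$) changes entries valued $(i+1,k)$ to $(i,k)$ (resp. $(i,k)$ to $(i+1,k)$) with $(i,k),(i+1,k)\neq(m_r,r)$, so it fixes $Y$ setwise; hence every $S$ occurring satisfies: the $(m_r,r)$-entries of $S$ are still located at $Y$. Now the shape $|T\setminus(m_r,r)|$ is obtained from $\la$ by deleting the $\preceq$-minimal node of $Y$, and likewise for $S$; since $Y$ is the same set, the deleted node is the same, so in fact $|S\setminus(m_r,r)|$ and $|T\setminus(m_r,r)|$ are comparable shapes differing only through the rearrangement of the remaining entries — and the standard cellular-action inequality applied to the rank-$n$ algebra $\Sc_{n,r}$ acting on $S\setminus(m_r,r),T\setminus(m_r,r)$ (via Lemma \ref{Lemma relation JM} and Proposition \ref{Prop embedding Scn to Scn+1}) gives exactly $|S\setminus(m_r,r)|\geq|T\setminus(m_r,r)|$. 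I would package the last step by observing that $\Res^{n+1}_n(\D_{n+1}(\la))=\xi\cdot\D_{n+1}(\la)$ carries a filtration by the "$(m_r,r)$-content," and on each associated graded piece the $\Sc_{n,r}$-action is literally the cellular action on a Weyl module of $\Sc_{n,r}$, where the inequality is the defining triangularity of the cellular basis.

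The main obstacle I anticipate is justifying cleanly that the $(m_r,r)$-entries are untouched and that, modulo that, the residual action is the honest $\Sc_{n,r}$-cellular action — i.e. setting up the bijection between $\{S\in\CT_0(\la,\nu):\nu\in\vL_{n+1,r}^\g(\Bm)\}$ with a fixed $(m_r,r)$-node at position $x$, and $\CT_0(\la\setminus x,\nu')$ with $\nu'\in\vL_{n,r}(\Bm')$, in a way compatible with both the generator action and the deletion map $T\mapsto T\setminus(m_r,r)$. Once that compatibility (which is essentially $T_x^{\wh\la}\setminus(m_r,r)=T^{\la\setminus x}$ from \eqref{definition Txla}, extended to all tableaux) is in hand, the inequality is inherited from the known cellular triangularity for $\Sc_{n,r}$ and both (i) and (ii) follow at once, the $F$-case being entirely parallel to the $E$-case.
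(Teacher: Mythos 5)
Your reduction to the inequality on restricted shapes is right, and your instinct that the key input is ``$E_{(i,k)},F_{(i,k)}$ for $(i,k)\in\vG'(\Bm')$ only involve the first $n$ letters'' is also the right starting point. But the central step of your argument is false: you claim that the operator ``fixes $Y$ setwise,'' i.e.\ that the node carrying the entry $(m_r,r)$ is in the same position in every $S$ occurring in the expansion as in $T$. If that were true, the deleted node would be the same and you would get the \emph{equality} $|S\setminus(m_r,r)|=|T\setminus(m_r,r)|$ (the shape after deletion depends only on which node is deleted, not on any ``rearrangement of the remaining entries''), which is strictly stronger than the proposition and is not what happens: the $(m_r,r)$-node genuinely moves, which is exactly why Theorem \ref{Thm Res Ind Weyl}(i) produces a filtration of $\Res^{n+1}_n(\D_{n+1}(\la))$ rather than a direct sum decomposition. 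The action of $E_{(i,k)}$ is multiplication in the Hecke algebra followed by re-expansion in the cellular basis; only the weight (the multiset of entries) is preserved, not the positions of individual entries.

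The actual mechanism, which your argument is missing, is a triangularity statement at the level of \emph{standard} tableaux for the restriction $\He_{n+1,r}\downarrow\He_{n,r}$. The paper writes $m_{TT^\la}=m_\mu h$, applies the explicit formula \eqref{description E(i,k)}, and observes that because $\mu_{m_r}^{(r)}=1$ and $\mu_{i+1}^{(k)}\geq 1$ the prefactor $q^{-\mu_{i+1}^{(k)}+1}\bigl(\sum_{x}q^{\ell(x)}T_x^\ast\bigr)h^\mu_{+(i,k)}$ lies in the subalgebra $\He_{n,r}$; then the result of Ariki--Mathas (\cite[Proof of Proposition 1.9]{AM}) says that multiplying $m_{\Ft\Ft^\la}$ by an element of $\He_{n,r}$ yields, modulo $\He_{n+1,r}(>\la)$, a combination of $m_{\Fs\Ft^\la}$ with $|\Fs\setminus(n+1)|\geq|\Ft\setminus(n+1)|$ --- an inequality, not an equality, because the position of $n+1$ can move under straightening. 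Comparing coefficients and using $|S\setminus(m_r,r)|=|\Fs\setminus(n+1)|$ then gives the claim. So the correct conclusion to draw from ``the operator lives in the rank-$n$ subalgebra'' is that this branching triangularity applies, not that the distinguished node is frozen; without that input (or an equivalent straightening argument) your proof does not go through.
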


\begin{proof}
We prove only (\roi) since we can prove (\roii) in a similar way. 

If $\mu + \a_{(i,k)} \not\in \vL_{n+1,r}(\Bm)$, 
we have 
$E_{(i,k)} \cdot \vf_T = E_{(i,k)} 1_\mu \cdot \vf_T = 0$ by \eqref{S-2}, 
and there is nothing to prove. 
Thus, we assume that $\mu + \a_{(i,k)} \in \vL_{n+1,r}(\Bm)$. 
Then we have 
$E_{(i,k)} \cdot \vf_T = 1_{\mu + \a_{(i,k)}} E_{(i,k)} \cdot \vf_T$, 
and this implies that 
$S \in \CT_0 (\la, \mu + \a_{(i,k)})$
if $r_S \not=0$ thanks to \eqref{weight celluar basis of Sc}. 
Hence, 
it is enough to prove that 
$|S \setminus (m_r,r)| \geq  |T \setminus (m_r,r)|$ if $r_S \not=0$.

By \cite[Proposition 6.3]{DJM98}, we can write 
$m_{TT^\la} = m_\mu h$ for some $h \in \He_{n+1,r}$. 
Then, by \eqref{description E(i,k)}, we have 
\begin{align*}
E_{(i,k)} \cdot \vf_{T T^\la} (m_\la) 
&= E_{(i,k)} (m_{T T^\la}) 
\\
&= E_{(i,k)} (m_\mu h)
\\
&=
\Big( q^{- \mu_{i+1}^{(k)} +1} 
		\big( \sum_{x \in X_\mu^{\mu+\a_{(i,k)}}} q^{\ell (x)} T_x^\ast \big) h^\mu_{+(i,k)} m_\mu 
\Big) \cdot h			
\\
&= 
q^{- \mu_{i+1}^{(k)} +1}  \big( \sum_{x \in X_\mu^{\mu+\a_{(i,k)}}} q^{\ell (x)} T_x^\ast \big) h^\mu_{+(i,k)} 
	m_{TT^\la}
\\
&= 
\sum_{\Ft\in \Std(\la) \atop \mu(\Ft)=T} 
	q^{- \mu_{i+1}^{(k)} +1}  \big( \sum_{x \in X_\mu^{\mu+\a_{(i,k)}}} q^{\ell (x)} T_x^\ast \big) h^\mu_{+(i,k)}
	m_{\Ft \Ft^\la}.
\end{align*}
(Note that $\Ft^\la$ is the unique standard tableau $\Ft \in \Std(\la)$ such that $\la(\Ft) =T^\la$.)
Since $\mu \in \vL^\g_{n+1,r}(\Bm)$ and  $\mu + \a_{(i,k)} \in \vL_{n+1,r}(\Bm)$, 
we have $\mu_{m_r}^{(r)}=1$ and $\mu_{i+1}^{(k)} \geq 1$. 
These imply 
$\sum_{l=1}^{k-1} |\mu^{(l)}| + \sum_{j=1}^i \mu_j^{(k)} \leq n-1$. 
Then, from the definitions of $X_\mu^{\mu+\a_{(i,k)}}$ and $h_{+(i,k)}^\mu$, 
we see that 
$q^{- \mu_{i+1}^{(k)} +1}  \big( \sum_{x \in X_\mu^{\mu+\a_{(i,k)}}} q^{\ell (x)} T_x^\ast \big) h^\mu_{+(i,k)} 
\in \He_{n,r}$, 
where we regard $\He_{n,r}$ as a subalgebra of $\He_{n+1,r}$ by \eqref{embedding H}. 
Thus, by \cite[Proof of Proposition 1.9]{AM}, 
we have 
\begin{align}
\label{E vfT}
E_{(i,k)} \cdot \vf_{T T^\la} (m_\la)  
&= 
\sum_{\Ft\in \Std(\la) \atop \mu(\Ft)=T} 
	q^{- \mu_{i+1}^{(k)} +1}  \big( \sum_{x \in X_\mu^{\mu+\a_{(i,k)}}} q^{\ell (x)} T_x^\ast \big) h^\mu_{+(i,k)}
	m_{\Ft \Ft^\la} 
\\
& \equiv 
\sum_{\Ft \in \Std(\la) \atop \mu(\Ft)=T} 
\Big( \sum_{\Fs \in \Std(\la) \atop |\Fs \setminus n+1| \geq |\Ft \setminus n+1|} 
		r^{\Ft}_{ \Fs} m_{\Fs \Ft^\la}
\Big)  
\mod \He_{n+1,r} (>\la) 
\quad (r^\Ft_\Fs \in R),
\notag 
\end{align}
where 
$\He_{n+1,r}(>\la) $ is an $R$-submodule of $\He_{n+1,r}$ spanned by 
$\{ m_{\Fu \Fv} \,|\, \Fu, \Fv \in \Std(\la') \break \text{ for some } 
	\la' \in \vL_{n+1,r}^+ \text{ such that } \la'>\la \}$. 
Since $\mu_{m_r}^{(r)}=1$, 
we see that 
$|\Ft \setminus n+1|$ 
does not depend on a choice of $\Ft \in \Std(\la)$ such that $\mu(\Ft)=T$. 
Then, 
take and fix a standard tableau $\Ft' \in \Std(\la)$ such that $\mu (\Ft')=T$, 
and \eqref{E vfT} implies 
\begin{align}
\label{E vfT 2}
E_{(i,k)} \cdot \vf_{T T^\la} (m_\la)  
\equiv 
\sum_{\Fs \in \Std(\la) \atop |\Fs \setminus n+1| \geq |\Ft' \setminus n+1|} r_\Fs m_{\Fs \Ft^\la} 
\mod \He_{n+1,r}(>\la) \quad (r_\Fs \in R). 
\end{align}
On the other hand, 
by a general theory of cellular algebras together with \eqref{weight celluar basis of Sc}, 
we can write 
\begin{align*}
E_{(i,k)} \cdot \vf_{TT^\la} \equiv \sum_{S \in \CT_0(\la, \mu+ \a_{(i,k)})} r_S \vf_{ST^\la} 
\mod \Sc_{n+1,r}(>\la) \quad (r_S \in R).
\end{align*}
Thus, we have 

\begin{align}
\label{E vfT 3}
E_{(i,k)} \cdot \vf_{T T^\la} (m_\la) 
&\equiv \sum_{S \in \CT_0(\la,\mu+\a_{(i,k)})} r_{S} m_{S T^\la}  
\\
&=
\sum_{S \in \CT_0(\la,\mu + \a_{(i,k)})} r_S 
	\Big( \sum_{\Fs \in \Std(\la) \atop (\mu+\a_{(i,k)})(\Fs)=S} q^{\ell (d(\Fs))} m_{\Fs \Ft^\la} \Big) 
\mod \He_{n+1,r}(>\la).
\notag
\end{align}
Note that $\mu + \a_{(i,k)} \in \vL^\g_{n+1,r}(\Bm)$, 
we can easily check that 
$|S \setminus (m_r,r)|=|\Fs \setminus n+1|$ 
for $S \in \CT_0(\la,\mu + \a_{(i,k)})$ and $\Fs \in \Std(\la)$ such that $(\mu +\a_{(i,.k)})(\Fs)=S$. 
Similarly, we have $|T \setminus (m_r,r)|=|\Ft' \setminus n+1|$. 
Thus, by comparing the coefficients in \eqref{E vfT 2} and \eqref{E vfT 3}, 
we have 
$|S \setminus (m_r,r)| \geq  |T \setminus (m_r,r)|$ if $r_S \not=0$. 
\end{proof}

Now we can describe filtrations of restricted and induced Weyl modules 
(resp. costandard modules) as follows.

\begin{thm}
\label{Thm Res Ind Weyl}
Assume that $R$ is a field, 
we have the following. 
\begin{enumerate}
\item 
For $\la \in \vL_{n+1,r}^+$, 
there exists a filtration of $\Sc_{n,r}$-modules 
\[ 
\Res^{n+1}_n (\D_{n+1}(\la)) = M_1 \supset M_2 \supset \dots \supset M_k \supset M_{k+1}=0, 
\]
such that 
$M_i / M_{i+1} \cong \D_n (\la \setminus x_i)$, 
where 
$x_1,x_2, \dots, x_k$ are all removable nodes of $\la$ such that 
$x_1 \succ x_2 \succ \dots \succ x_k$. 

\item 
For $\la \in \vL_{n+1,r}^+$, 
there exists a filtration of $\Sc_{n,r}$-modules 
\[ 
\Res^{n+1}_n (\N_{n+1}(\la)) = N_k \supset N_{k-1} \supset \dots \supset N_1 \supset N_{0}=0, 
\]
such that 
$N_i / N_{i-1} \cong \N_n (\la \setminus x_i)$, 
where 
$x_1,x_2, \dots, x_k$ are all removable nodes of $\la$ such that 
$x_1 \succ x_2 \succ \dots \succ x_k$. 

\item 
For $\mu \in \vL_{n,r}^+$, there exists a filtration of $\Sc_{n+1,r}$-modules 
\[ 
\Ind^{n+1}_n (\D_n(\mu)) = M_1 \supset M_2 \supset \dots \supset M_k \supset M_{k+1}=0 
\]
such that $M_i / M_{i+1} \cong \D_{n+1}(\mu \cup x_i)$, 
where 
$x_1, x_2, \dots, x_k$ are all addable nodes of $\mu$ 
such that 
$x_k \succ x_{k-1} \succ \dots \succ x_1$. 

\item 
For $\mu \in \vL_{n,r}^+$, there exists a filtration of $\Sc_{n+1,r}$-modules 
\[ 
\coInd^{n+1}_n (\N_n(\mu)) = N_k \supset N_{k-1} \supset \dots \supset N_1 \supset N_{0}=0 
\]
such that $N_i / N_{i-1} \cong \N_{n+1}(\mu \cup x_i)$, 
where 
$x_1, x_2, \dots, x_k$ are all addable nodes of $\mu$ 
such that 
$x_k \succ x_{k-1} \succ \dots \succ x_1$. 
\end{enumerate}
\end{thm}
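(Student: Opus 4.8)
The plan is to prove all four parts together by reducing (ii)--(iv) to (i) and then proving (i) directly. Parts (i) and (ii) are related by the duality functor $\cas$: since $\Res^{n+1}_n \circ \cas \cong \cas \circ \Res^{n+1}_n$ (Lemma \ref{Lemma commute Res cas, Ind cas}(i)) and $\N_m(\nu) = (\D_m(\nu))^\cas$ for all $\nu$, a $\D$-filtration of $\Res^{n+1}_n(\D_{n+1}(\la))$ dualizes to an $\N$-filtration of $\Res^{n+1}_n(\N_{n+1}(\la))$ with the order of the layers reversed, which is exactly the ordering asserted in (ii). For (iii) and (iv): since $\Res^{n+1}_n$ is both left and right adjoint to $\Ind^{n+1}_n$ (established earlier), and since $\Ind^{n+1}_n \cong \coInd^{n+1}_n$, one can use adjunction together with the fact that $\Ind^{n+1}_n$ and $\coInd^{n+1}_n$ are exact to deduce that $\Ind^{n+1}_n$ sends $\Sc_{n,r}\cmod^\D$ into $\Sc_{n+1,r}\cmod^\D$; more concretely, I would compute $[\Ind^{n+1}_n(\D_n(\mu))]$ in $K_0$ via the adjunction $\Hom(\Ind^{n+1}_n \D_n(\mu), \N_{n+1}(\nu)) \cong \Hom(\D_n(\mu), \Res^{n+1}_n \N_{n+1}(\nu))$ and part (ii), and then invoke the standard fact for quasi-hereditary algebras that an exact functor which sends standard modules to objects with the right standard-filtration multiplicities, and which is compatible with $\cas$ (using Lemma \ref{Lemma commute Res cas, Ind cas}(ii) and Lemma \ref{Lemma Ind D Ind N in K0}), actually produces a genuine $\D$-filtration; the precise ordering of layers in (iii) comes from the ordering in (i) via the bijection between removable nodes of $\mu\cup x$ and addable nodes of $\mu$.

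For part (i) itself, the strategy is to build the filtration explicitly using the cellular basis. Recall that $\Res^{n+1}_n(\D_{n+1}(\la)) = \xi\cdot\D_{n+1}(\la)$ has $R$-basis $\{\vf_T \mid T \in \CT_0(\la,\mu),\ \mu \in \vL^\g_{n+1,r}(\Bm)\}$, and each such $T$ has a well-defined shape $|T\setminus(m_r,r)| = \la\setminus x$ for a unique removable node $x$ of $\la$. Order the removable nodes $x_1 \succ x_2 \succ \dots \succ x_k$, and define $M_i$ to be the $R$-span of those $\vf_T$ with $|T\setminus(m_r,r)| \in \{\la\setminus x_i, \la\setminus x_{i+1}, \dots, \la\setminus x_k\}$. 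Proposition \ref{Prop action E F vfT} is exactly what is needed to see that each $M_i$ is an $\Sc_{n,r}$-submodule: the action of $E_{(i,k)}$ and $F_{(i,k)}$ on $\vf_T$ produces only terms $\vf_S$ with $|S\setminus(m_r,r)| \geq |T\setminus(m_r,r)|$, and one checks that "$\geq$" on shapes $\la\setminus x$ translates into "$\succeq$" on the removable nodes in a way compatible with the chosen ordering, so the action preserves the span of the higher-shape basis elements. That the $E$- and $F$-action also preserves the idempotent decomposition $1_\mu$ is automatic.

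The remaining point is to identify the subquotients $M_i/M_{i+1}$. This module has basis the images of $\vf_T$ with $|T\setminus(m_r,r)| = \la\setminus x_i$ exactly, i.e.\ $T$ ranges over $\{T_{x_i}^\la\}$ composed with semistandard fillings, and the map $T \mapsto T\setminus(m_r,r)$ is a bijection onto $\CT_0(\la\setminus x_i)$. I would show the element $\vf_{T_{x_i}^\la} + M_{i+1}$ is a highest weight vector of weight $\g(\mu)$ where $\mu = \la\setminus x_i$ (using that $T_{x_i}^\la \setminus (m_r,r) = T^{\la\setminus x_i}$ and that $E_{(j,l)}\cdot\vf_{T^{\la\setminus x_i}} = 0$ in $\D_{n+1}$ modulo higher-shape terms, via Proposition \ref{Prop action E F vfT}), so by the universality of Weyl modules (Lemma \ref{Lemma universality of Weyl}) there is a surjection $\D_n(\la\setminus x_i) \twoheadrightarrow M_i/M_{i+1}$; comparing $R$-ranks (both equal $|\CT_0(\la\setminus x_i)|$) forces this to be an isomorphism. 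The main obstacle I anticipate is the bookkeeping in this last step --- verifying carefully that $M_i/M_{i+1}$ is generated by the single vector $\vf_{T_{x_i}^\la}$ as an $\Sc_{n,r}$-module and that the $F$-action realizes all the basis vectors $\vf_T$ with the correct shape --- essentially that the cellular/Weyl-module structure of $\D_n(\la\setminus x_i)$ is faithfully reproduced inside the subquotient; this requires a compatibility between the DJM cellular combinatorics for $\Sc_{n,r}$ and $\Sc_{n+1,r}$ under the embedding $\iota$, which is plausible but needs the explicit formulas for the $E,F$ actions from \cite{W} rather than just Proposition \ref{Prop action E F vfT}.
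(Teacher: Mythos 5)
Your treatment of parts (i) and (ii) is essentially the paper's proof: the explicit filtration of $\xi\cdot\D_{n+1}(\la)$ by the shape $|T\setminus(m_r,r)|$, Proposition \ref{Prop action E F vfT} to see the $M_i$ are submodules, the highest weight vector $\vf_{T_{x_i}^\la}+M_{i+1}$ together with Lemma \ref{Lemma universality of Weyl} and a rank count to identify the subquotients, and then $\cas$ for (ii). (The paper settles your worry about the rank comparison by first proving the map $\D_n(\la\setminus x_i)\to \Sc_{n,r}\cdot(\vf_{T_{x_i}^\la}+M_{i+1})$ is an isomorphism over $\CA$ via injectivity over $\CK$ and specialization, and only then comparing dimensions over the field.)

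Your argument for (iii) and (iv), however, has a genuine gap: it is circular. You invoke ``$\Res^{n+1}_n$ is both left and right adjoint to $\Ind^{n+1}_n$ (established earlier)'', the isomorphism $\Ind^{n+1}_n\cong\coInd^{n+1}_n$, and the exactness of $\Ind^{n+1}_n$. None of these is available at this point: they are Theorem \ref{Thm iso Ind coInd} and Corollary \ref{Cor properties Res Ind}, proved in \S4, and their proofs rest on Theorem \ref{Thm Res Ind Weyl} itself (for instance Lemma \ref{Lemma exact Ind D} needs $\xi\Sc_{n+1,r}\in\Sc_{n,r}\cmod^\D$, which is a consequence of part (i) applied to the bimodule filtration of $\Sc_{n+1,r}$, and Lemma \ref{Lemma comparsion dimensions} explicitly uses part (iii)). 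What is known when Theorem \ref{Thm Res Ind Weyl} is proved is only that $\Ind^{n+1}_n$ is right exact and left adjoint to $\Res^{n+1}_n$, and $\coInd^{n+1}_n$ is left exact and right adjoint to $\Res^{n+1}_n$. Moreover, the ``standard fact'' you appeal to --- that matching standard-filtration multiplicities in $K_0$, plus compatibility with $\cas$, yields a genuine $\D$-filtration --- is not a theorem: every class in $K_0$ is an integral combination of classes of standards, but most modules have no $\D$-filtration; the actual criterion is $\Ext^1(M,\N(\nu))=0$ for all $\nu$, and your outline does not verify it. The paper's route avoids all of this: it proves (iv) first, by filtering the bimodule $\xi\Sc_{n+1,r}$ with layers $\Res^{n+1}_n(\D_{n+1}(\la_i))\otimes_R\D^\sharp_{n+1}(\la_i)$ (which lie in $\Sc_{n,r}\cmod^\D$ by part (i)), applying $\Hom_{\Sc_{n,r}}(-,\N_n(\mu))$ and using the Ext-vanishing between $\D$-filtered and $\N$-filtered modules to get an exact filtration of $\coInd^{n+1}_n(\N_n(\mu))$ whose layers are computed to be $\N_{n+1}(\mu\cup x_i)$; part (iii) then follows by applying $\cas$ and Lemma \ref{Lemma commute Res cas, Ind cas}(ii). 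You should replace your argument for (iii)--(iv) by something of this kind, using only the adjunctions and exactness properties that are available before \S4.
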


\begin{proof}
(\roi) 
Until declining, 
let $R$ be an arbitrary commutative ring. 
Put 
\[
\CT_0^\g(\la) = \bigcup_{\mu \in \vL^\g_{n+1,r}(\Bm)} \CT_0(\la,\mu).
\]  
Then, 
$\Res^{n+1}_n (\D_{n+1,r}(\la))$ has an $R$-free basis 
$\{\vf_T \,|\, T \in \CT_0^\g(\la)\}$. 
For $T \in \CT_0^\g (\la)$, 
there exists the unique removable node $x$ of $\la$ 
such that 
$T(x)=(m_r,r)$ since $\mu_{m_r}^{(r)}=1$.  
Let $x_1, x_2,\dots, x_k$ 
be all removable nodes of $\la$ 
such that 
$x_1 \succ x_2 \succ \dots \succ x_k$ 
(note that the order $\succeq $ determines a total order on the set of removable nodes of $\la$). 
Let $M_i$ 
be an $R$-submodule of $\Res^{n+1}_n (\D_{n+1}(\la))$ 
spanned by 
\[ 
\{ \vf_T \,|\, T \in \CT_0^\g(\la) \text{ such that } 
	T(x_j) =(m_r,r) \text{ for some } j \geq i
\}.
\]
Then we have a filtration of $R$-modules 
\begin{align}
\label{filtration Res D}
\Res^{n+1}_n (\D_{n+1}(\la)) = M_1 \supset M_2 \supset \dots \supset M_k \supset M_{k+1}=0. 
\end{align}

For $T \in \CT_0^{\g}(\la)$ such that $T(x_i)=(m_r,r)$, 
we have 
$|T \setminus (m_r,r)| = \la \setminus x_i$ by the definition. 
Note that 
$\la \setminus x_j > \la \setminus x_i$ if and only if $x_j \prec x_i$ (i.e. $j > i$). 
Then, 
for $S,T \in \CT_0^\g (\la)$ such that $S(x_j)=T(x_i)= (m_r,r)$, 
we have that 
\begin{align}
\label{S T}
|S \setminus (m_r,r)| > |T \setminus (m_r,r)| 
\text{ if and only if } 
j  >  i.
\end{align} 
By Proposition \ref{Prop action E F vfT} and \eqref{S T}, 
we see that 
$M_i$ is an $\Sc_{n,r}$-submodule of 
$\Res_n^{n+1}(\D_{n+1}(\la))$ 
for each $i=1, \dots, k$, 
and the filtration \eqref{filtration Res D} 
is a filtration of $\Sc_{n,r}$-modules.

From the definition, 
$M_i/M_{i+1}$ has an $R$-free basis 
\[
\{\vf_T + M_{i+1} \,|\, T \in \CT_0^\g(\la) \text{ such that } T(x_i)=(m_r,r)\}.
\]  
Note that 
$T_{x_i}^\la \in \CT_0^\g(\la)$ and $T_{x_i}^\la (x_i)=(m_r,r)$.
Let 
$x_i =(a,b,c)$, 
and put 
$\t = \la - (\a_{(a,c)} + \a_{(a+1,c)} + \dots + \a_{(m_r -1,r)})$. 
Then we have 
$T_{x_i}^\la \in \CT_0(\la,\t)$. 
Note that 
$E_{(j,l)} \cdot \vf_{T_{x_i}^\la}=1_{\t + \a_{(j,l)}} E_{(j,l)} \cdot \vf_{T_{x_i}^\la}$ 
is a linear combination of $\{\vf_T \,|\, T \in \CT_0(\la,\t + \a_{(j,l)}) \}$, 
and 
that $\CT_0(\la,\t + \a_{(j,l)}) = \emptyset$ unless $\la \geq \t + \a_{(j,l)}$. 

If $(j,l) \succ (a,c)$, 
we have $E_{(j,l)} \cdot \vf_{T_{x_i}^\la} =0$ since $\la \not\geq \t + \a_{(j,l)}$. 

Assume that $(j,l) \preceq (a,c)$, 
and we have 
\begin{align}
\label{sum la tau alpha}
\sum_{g=1}^{l-1}|\la^{(g)}| + \sum_{b=1}^{j} \la^{(l)}_b 
	= \sum_{g=1}^{l-1}|(\t + \a_{(j,l)})^{(g)}| + \sum_{b=1}^{j} (\t + \a_{(j,l)})^{(l)}_b. 
\end{align}
By \eqref{sum la tau alpha} together with the definition of  semi-standard tableaux, 
we can easily check that 
$S((a' , b' , c')) \leq (j,l)$ for any $S \in \CT_0(\la,\t + \a_{(i,k)})$ 
and any $(a', b', c') \in [\la]$ such that  $(a', c') \succeq (j,  l)$.  
This implies that 
\begin{align}
\label{S > Txla}
|S \setminus (m_r,r)| \not= |T_{x_i}^\la \setminus (m_r,r)|  
\text{ for any } 
S \in \CT_0(\la,\t + \a_{(j,l)})
\end{align} 
since 
$(a,c) \succeq (j, l)$ 
and 
$T^\la_{x_i}(x_i)=T^\la_{x_i}((a,b,c)) =(m_r,r) \geq (j,l)$.

Thus, 
Proposition \ref{Prop action E F vfT} (\roi)  together with  \eqref{S > Txla} implies 
\begin{align*}
\label{E vf T x la}
E_{(j,l)} \cdot \vf_{T_{x_i}^\la}  \in M_{i+1}  
\text{ for any }  (j,l) \in \vG'(\Bm').
\end{align*}
Thus, 
$\vf_{T_{x_i}^\la} + M_{i+1} \in M_{i}/M_{i+1}$ is a highest weight vector of  weight $\la \setminus x_i$ 
as an element of the $ \Sc_{n,r}$-module. 
Thus, 
by the universality of Weyl modules (Lemma \ref{Lemma universality of Weyl}), 
we have the  surjective $_R \Sc_{n,r}$-homomorphism 
\[ 
X_R :  \, _R \D_n(\la \setminus x_i) \ra  \, _R \Sc_{n,r} \cdot (\vf_{T_{x_i}^\la} + M_{i+1}). 
\]

Since $_\CK \D_n(\la \setminus x_i)$ is a simple $_\CK \Sc_{n,r}$-module, 
$X_{\CK}$ 
is  injective. 
Note that 
$X_R$ is determined by  $X_R (1 \otimes v_{\la \setminus x_i}) = \vf_{T_{x_i}^\la} + M_{i+1}$, 
we see that 
$X_\CA$ is the restriction of $X_{\CK}$ to $_\CA \D_n(\la \setminus x_i)$. 
Thus, 
$X_{\CA}$ is also injective, and 
$X_\CA$ is an isomorphism.  
Then, 
by the argument of specialization, 
we have 
$X_R$ is an isomorphism for an arbitrary commutative ring $R$. 

Assume that $R$ is a field. 
Since $\D_n(\la \setminus x_i) \cong \Sc_{n,r} \cdot  (\vf_{T_{x_i}^\la} + M_{i+1})$
 is an $\Sc_{n,r}$-submodule of $ M_i / M_{i+1}$, 
we have that 
$\D_n(\la \setminus x_i) \cong M_i/M_{i+1}$ 
by comparing the dimensions of $\D_n(\la \setminus x_i)$ and of $M_i/M_{i+1}$.
Now we proved (\roi).

(\roii) is obtained by applying the contravariant functor $\cas$ to (\roi) 
thanks to Lemma \ref{Lemma commute Res cas, Ind cas} (\roi). 

Next, we prove (\roiv). 
For $\la \in \vL_{n+1,r}^+$, 
let 
$\D_{n+1}^\sharp(\la)$ be an $R$-submodule of $\Sc_{n+1,r}/ \Sc_{n+1,r}(>\la)$ 
spanned by 
$\{ \vf_{T^\la T} + \Sc_{n+1,r}(>\la) \,|\, T \in \CT_0(\la)\}$.
Then, by a general theory of cellular algebras, 
it is known that 
$\D_{n+1}^\sharp (\la)$ is a right $\Sc_{n+1,r}$-submodule of $\Sc_{n+1,r}/ \Sc_{n+1,r}(>\la)$, 
and that 
\begin{align} 
\label{dual right D iso N}
\Hom_R (\D_{n+1}^\sharp(\la), R) \cong \D_{n+1}(\la)^\cas \cong \N_{n+1}(\la) 
\end{align}
as left $\Sc_{n+1,r}$-modules. 

Let 
$\vL_{n+1,r}^+ = \{\la_1, \la_2, \cdots, \la_g\}$ 
be such that 
$i<j$ if $\la_i < \la_j$. 
By a general theory of cellular algebras, 
we obtain a filtration of $(\Sc_{n+1,r}, \Sc_{n+1,r})$-bimodules 
\[ 
\Sc_{n+1,r} = J_1 \supset J_2 \supset \dots  \supset J_g \supset J_{g+1}=0
\]
such that 
$J_i/ J_{i+1} \cong \D_{n+1}(\la_i) \otimes_R \D_{n+1}^\sharp (\la_i)$.
By applying the exact functor $\Res^{n+1}_n$ to this filtration, 
we obtain a filtration of $(\Sc_{n,r}, \Sc_{n+1,r})$-bimodules 
\[ 
\xi \Sc_{n+1,r}
= \xi  J_1 \supset \xi  J_2 \supset \dots \supset \xi J_g  \supset \xi  J_{g+1}=0 
\]
such that 
$\xi J_i /  \xi J_{i+1} \cong \Res^{n+1}_n (\D_{n+1}(\la_i)) \otimes_R \D_{n+1}^\sharp (\la_i)$. 
By (\roi), 
we have that 
$\xi J_i/ \xi J_{i+1} \in \Sc_{n,r} \cmod^\D$ for each $i=1, \dots, g$. 
Thus, by a general theory of quasi-hereditary algebras, 
we have a filtration of left $\Sc_{n+1,r}$-modules 
\begin{align}
\label{filtration Hom xi Scn+1 N}
\Hom_{\Sc_{n,r}} ( \xi \Sc_{n+1,r}, \N_n(\mu) ) 
=N_g \supset N_{g-1} \supset \dots \supset N_1 \supset N_0=0
\end{align}
such that 
$N_i/ N_{i-1} \cong \Hom_{\Sc_{n,r}}(\xi J_i / \xi J_{i+1}, \N_n(\mu))$. 

On the other hand, 
we have the following isomorphisms as $\Sc_{n+1,r}$-modules. 
\begin{align}
&\label{Hom xi Ji / xi Ji+1, N} 
\Hom_{\Sc_{n,r}} (\xi J_i / \xi J_{i+1}, \N_n(\mu) 
\\
\notag
&\cong
\Hom_{\Sc_{n,r}} \big( \Res^{n+1}_n (\D_{n+1}(\la_i))\otimes_R \D^\sharp_{n+1}(\la_i), \N_n(\mu) \big)
\\
\notag
&\cong 
\Hom_R \big( \D_{n+1}^\sharp(\la_i), \Hom_{\Sc_{n,r}} ( \Res_n^{n+1} ( \D_{n+1} ( \la_i ) ), \N_{n}(\mu )) \big)
\\
\notag
&\cong 
\begin{cases} 
\Hom_{R}(\D_{n+1}^\sharp(\la_i), R)& \text{ if   $\la_i = \mu \cup x_i$ for some addable node $x_i$ of $\mu$}
\\
0 & \text{ otherwise },
\end{cases}
\end{align}
where 
the last isomorphism follows from a general theory of quasi-hereditary algebras 
(see e.g. \cite[Proposition A2.2 (\roii)]{D-book}) together with (\roi).  

Suppose  
$\la_i = \mu \cup x_i$ and $\la_j = \mu \cup x_j$ 
for some addable nodes $x_i$, $x_j$ of $\mu$.  
Then, we have that  
$\la_i < \la_j $ 
if and only if 
$x_i \prec x_j$. 
Thus, 
by \eqref{filtration Hom xi Scn+1 N} together with 
\eqref{Hom xi Ji / xi Ji+1, N} and \eqref{dual right D iso N}, 
we obtain (\roiv). 

(\roiii) is obtained by applying the contravariant functor $\cas$ to (\roiv) 
thanks to Lemma \ref{Lemma commute Res cas, Ind cas} (\roii). 
\end{proof}

%%%%%%%%%%%%%%%%%%%%%%%%%%%%%%%%%%%%%%%%%%%%%%%%%%%%%%%%%%%%%%%%%%%%%%%%%%%%%%%%%%%

%%%%%%%%%%%%%%%%%%%%%%%%%%%%%%%%%%%%%%%%%%%%%%%%%%%%%%%%%%%%%%%%%%%%%%%%%%%%%%%%%%%

\section{Some properties of induction and restriction functors}

In this section, 
we study some properties for induction and restriction functors. 
In particular, 
we will prove that 
$\Ind^{n+1}_n$ and $\coInd^{n+1}_n$ are isomorphic. 

From now on, 
throughout of this paper, 
we assume that $R$ is a field, 
and that $Q_k \not=0$ for any $k=1,\dots,r$. 

\para 
\textit{Schur functors.} 
We recall a definition and some properties of the Schur functor 
from $\Sc_{n,r}\cmod$ (resp. $\Sc_{n+1,r} \cmod$) 
to $\He_{n,r} \cmod$ (resp. $\He_{n+1,r} \cmod$).
Put
$\w_n = (\emptyset, \dots, \emptyset, (1, \dots, 1, 0, \dots,0)) \in \vL_{n,r}(\Bm')$ 
and 
$\w_{n+1} = \g(\w_n)  \in \vL_{n+1,r}(\Bm)$.
Then, 
it is clear that 
$M^{\w_n} \cong \He_{n,r}$  as right $\He_{n,r}$-modules 
(resp. $M^{\w_{n+1}} \cong \He_{n+1,r}$ as right $\He_{n+1,r}$-modules). 
Thus, we have that 
$1_{\w_n} \Sc_{n,r} 1_{\w_n} = \End_{\He_{n,r}}(M^{\w_n}) \cong \He_{n,r}$ 
(resp. $1_{\w_{n+1}} \Sc_{n+1,r} 1_{\w_{n+1}} = \End_{\He_{n+1,r}}(M^{\w_{n+1}}) \cong \He_{n+1,r}$) 
as $R$-algebras. 

Through the isomorphism  
$\He_{n,r} \cong 1_{\w_n} \Sc_{n,r} 1_{\w_n}$ 
(resp. $\He_{n+1,r} \cong 1_{\w_{n+1}} \Sc_{n+1,r} 1_{\w_{n+1}}$), 
we can define the exact functor, so called Schur functor,  
$\Om_n : \Sc_{n,r} \cmod \ra \He_{n,r} \cmod$ 
(resp. $\Om_{n+1} : \Sc_{n+1,r} \cmod \ra \He_{n+1,r} \cmod$) 
by 
\begin{align*}
\Om_n=  1_{\w_n} \Sc_{n,r} \otimes_{\Sc_{n,r}} ? 
\quad 
(\text{resp. }\Om_{n+1}
	= 1_{\w_{n+1}} \Sc_{n+1,r} \otimes_{\Sc_{n+1,r}} ?). 
\end{align*}
It is well known that 
$\Om_n$ (resp. $\Om_{n+1}$) is isomorphic to the functor 
$\Hom_{\Sc_{n,r}}(\Sc_{n,r} 1_{\w_n},?)$ 
(resp. $\Hom_{\Sc_{n+1,r}}(\Sc_{n+1,r} 1_{\w_{n+1}},?)$).

We also define the functor 
$\Phi_n : \He_{n,r} \cmod \ra \Sc_{n,r} \cmod$ 
(resp. $\Phi_{n+1} : \He_{n+1,r} \cmod \ra \Sc_{n+1,r} \cmod$) 
by 
\begin{align*}
\Phi_n = \Hom_{\He_{n,r}}(1_{\w_n} \Sc_{n,r}, ?) 
\quad 
(\text{resp. } 
\Phi_{n+1} 
	= \Hom_{\He_{n+1,r}}(1_{\w_{n+1}} \Sc_{n+1,r}, ?).  
\end{align*}
Then,
$\Phi_n$ (resp. $\Phi_{n+1}$) 
is the right adjoint functor of $\Om_n$ (resp. $\Om_{n+1}$), 
and that 
\begin{align}
\label{Om Phi Id}
\Om_n \circ \Phi_n \cong \Id_{n}^\He 
\quad 
(\text{resp. } \Om_{n+1} \circ \Phi_{n+1} \cong \Id_{n+1}^\He),
\end{align} 
where $\Id_n^{\He}$ (resp. $\Id_{n+1}^{\He}$) 
is the identity functor on $\He_{n,r} \cmod$ (resp. on $\He_{n+1,r} \cmod$) 
(see e.g. \cite[I. Theorem 6.8]{ASS}).

Let 
$\Sc_{n,r} \cproj$ (resp. $\Sc_{n+1,r} \cproj$) 
be the full subcategory of $\Sc_{n,r} \cmod$ (resp. $\Sc_{n+1,r} \cmod$) 
consisting of projective objects, 
and 
$I_n : \Sc_{n,r} \cproj \ra \Sc_{n,r} \cmod$ 
(resp. $I_{n+1} : \Sc_{n+1,r} \cproj \ra \Sc_{n+1,r} \cmod$) 
be the canonical embedding functor. 
Thanks to  the double centralizer property between 
$\Sc_{n,r}$ (resp. $\Sc_{n+1,r}$) and $\He_{n,r}$ (resp. $\Sc_{n+1,r}$) 
(see e.g. \cite[Theorem 5.3]{Mat04}), 
an adjunction between $\Om_n$ and $\Phi_n$ (resp. $\Om_{n+1}$ and $\Phi_{n+1}$)   
implies 
the  isomorphism of functors 
\begin{align}
\label{Phi Om I iso Id}
\Phi_n \circ \Om_n \circ I_n \cong \Id_n \circ I_n 
\quad  
(\text{resp. } \Phi_{n+1} \circ \Om_{n+1} \circ I_{n+1} \cong \Id_{n+1} \circ I_{n+1}) 
\end{align}
by \cite[Proposition 4.33]{R}.

Note that 
the isomorphism 
$1_{\w_n} \Sc_{n,r} 1_{\w_n} \cong \He_{n,r}$ 
(resp. $1_{\w_{n+1}} \Sc_{n+1,r} 1_{\w_{n+1}} \cong\nobreak \He_{n+1,r}$) 
is given by $\vf \mapsto \vf(m_{\w_n})$ (resp. $\vf \mapsto \vf(m_{\w_{n+1}}$)), 
we have the following lemma. 

\begin{lem}[{\cite[Proposition 6.3]{W2}}]\
\label{Lemma elements T}
\begin{enumerate}
\item Under the isomorphism $\He_{n,r} \cong 1_{\w_n} \Sc_{n,r} 1_{\w_n}$, we have 
\begin{align*}
&T_0 = 1_{\w_n} F_{(m_{r-1}, r-1)} E_{(m_{r-1}, r-1)} 1_{\w_n} + Q_r 1_{\w_n}, 
\\
&T_i= 1_{\w_n} F_{(i,r)} E_{(i,r)} 1_{\w_n} - q^{-1} 1_{\w_n} \quad (1 \leq  i \leq n-1).
\end{align*}

\item Under the isomorphism $\He_{n+1,r} \cong 1_{\w_{n+1}} \Sc_{n+1,r} 1_{\w_{n+1}}$, we have 
\begin{align*}
&T_0 = 1_{\w_{n+1}} F_{(m_{r-1}, r-1)} E_{(m_{r-1}, r-1)} 1_{\w_{n+1}} + Q_r 1_{\w_{n+1}}, 
\\
&T_i= 1_{\w_{n+1}} F_{(i,r)} E_{(i,r)} 1_{\w_{n+1}} - q^{-1} 1_{\w_{n+1}} \quad (1 \leq  i \leq n-1), 
\\
& T_{n} = 1_{\w_{n+1}} F_{(m_{r-1},r)} \dots F_{(n+1,r)} F_{(n,r)} 
		E_{(n,r)} E_{(n+1,r)} \dots E_{(m_r -1,r)} 1_{\w_{n+1}} 
	- q^{-1} 1_{\w_n}.
\end{align*}

\end{enumerate}
\end{lem}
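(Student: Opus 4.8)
The plan is to compute directly, using the explicit formulas \eqref{description E(i,k)} and \eqref{description F(i,k)} for the action of $E_{(i,k)}$, $F_{(i,k)}$ on the elements $m_\mu$. First I would observe that $\FS_{\w_n}=\{1\}$ and that the Jucys--Murphy factor occurring in $m_{\w_n}$ is an empty product (both because the first $r-1$ components of $\w_n$ are empty), so $m_{\w_n}=1$, and likewise $m_{\w_{n+1}}=1$. Hence $M^{\w_n}=\He_{n,r}$, and under the isomorphism $\He_{n,r}\cong 1_{\w_n}\Sc_{n,r}1_{\w_n}$, $\vf\mapsto\vf(m_{\w_n})$, an endomorphism is identified with its value at $1$. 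So to prove (i) and the first two formulas of (ii) it suffices to evaluate the claimed expressions at $m_{\w_n}$ (resp. $m_{\w_{n+1}}$) and recover $T_0$, $T_i$ (resp. $T_n$).

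\textbf{The cases $T_0$ and $T_i$.} For $1\le i\le n-1$: since the $i$-th and $(i+1)$-st parts of the last component of $\w_n$ both equal $1$ and $i\ne m_r$, formula \eqref{description E(i,k)} gives $E_{(i,r)}(m_{\w_n})=(1+qT_i)m_{\w_n}=m_{\w_n+\a_{(i,r)}}$; then, in the weight $\w_n+\a_{(i,r)}$ whose $i$-th and $(i+1)$-st parts are $2$ and $0$, formula \eqref{description F(i,k)} gives $F_{(i,r)}(m_{\w_n+\a_{(i,r)}})=q^{-1}(1+qT_i)=q^{-1}+T_i$. Hence $(1_{\w_n}F_{(i,r)}E_{(i,r)}1_{\w_n}-q^{-1}1_{\w_n})(m_{\w_n})=T_i$. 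For $T_0$ I would instead use the boundary node $(m_{r-1},r-1)$: the relevant $X$-set in \eqref{description E(i,k)} is a singleton, but now $h^{\w_n}_{+(m_{r-1},r-1)}=L_1-Q_r=T_0-Q_r$, so $E_{(m_{r-1},r-1)}(m_{\w_n})=(T_0-Q_r)m_{\w_n}$, and since $F_{(m_{r-1},r-1)}$ then acts by the scalar $1$ on the resulting generator, adding back $Q_r1_{\w_n}$ produces $T_0$. Replacing $\w_n$ by $\w_{n+1}$ gives the corresponding statements in $\He_{n+1,r}$.

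\textbf{The case $T_n$.} The third formula of (ii) I would obtain by a telescoping computation. Starting from $m_{\w_{n+1}}$, whose last component has ones in rows $1,\dots,n$ and a single box in row $m_r$, apply $E_{(m_r-1,r)},E_{(m_r-2,r)},\dots,E_{(n+1,r)}$ in this order; each slides the box one row upward, and because the corresponding $X$-sets are singletons, the factors $h^\mu_{+}$ equal $1$, and every intermediate $m_\mu$ equals $1$, each of them fixes the cyclic generator. We then sit in the weight $\mu$ with ones in rows $1,\dots,n+1$, and $F_{(n,r)}E_{(n,r)}$ acts exactly as in the $T_i$-computation above with $i=n$, sending $m_\mu$ to $q^{-1}+T_n$. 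Finally apply $F_{(n+1,r)},\dots,F_{(m_r-1,r)}$; these slide the box back to row $m_r$, and since each intermediate $m_\mu$ equals $1$ — so that left and right multiplication by $m_\mu$ agree — and these operators are right $\He_{n+1,r}$-linear, each of them fixes $q^{-1}+T_n$. Thus the whole composite sends $m_{\w_{n+1}}$ to $q^{-1}+T_n$, and subtracting $q^{-1}1_{\w_{n+1}}$ yields $T_n$.

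\textbf{Main obstacle.} The delicate point is the bookkeeping in the $T_n$ step: one must verify that every weight met while sliding the box lies in $\vL_{n+1,r}(\Bm)$ — this is precisely where the hypothesis $m_k\ge n+1$ enters, ensuring that no intermediate term is annihilated — keep exact track of the exponents $q^{-\mu_{i+1}^{(k)}+1}$ and $q^{-\mu_i^{(k)}+1}$ so that no spurious scalar survives the cancellation against the leading $q$-power coming from $E_{(n,r)}$, and check the collapse $m_\mu=1$ at each intermediate weight, which is what licenses commuting the sliding operators past $T_n$. The cases $T_0$ and $T_i$ are then routine substitutions into \eqref{description E(i,k)} and \eqref{description F(i,k)}.
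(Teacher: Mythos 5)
Your computation is correct, and it supplies a proof that this paper itself omits: the lemma is imported by citation from \cite[Proposition 6.3]{W2}, so there is no in-paper argument to compare against. Your route — observing $m_{\w_n}=m_{\w_{n+1}}=1$ (trivial Young subgroup, empty Jucys--Murphy product), identifying $1_{\w_n}\Sc_{n,r}1_{\w_n}$ with $\He_{n,r}$ via $\vf\mapsto\vf(m_{\w_n})$, and then evaluating the claimed expressions at the generator using \eqref{description E(i,k)} and \eqref{description F(i,k)} — is exactly the natural verification. I checked the delicate points: for $T_i$ the prefactors are $q^{-\mu_{i+1}^{(r)}+1}=q^0$ on the $E$ side and $q^{-2+1}=q^{-1}$ on the $F$ side with singleton $X$-set, giving $q^{-1}(1+qT_i)$; for $T_0$ the boundary factor $h^{\w_n}_{+(m_{r-1},r-1)}=L_1-Q_r$ is the whole content; and in the $T_n$ telescoping every intermediate weight has $m_\mu=1$, singleton $X$-sets, and trivial $q$-prefactors, so the sliding operators act as the identity and only the middle pair $F_{(n,r)}E_{(n,r)}$ contributes $q^{-1}+T_n$. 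All of this is as you describe, including the degenerate case $m_r=n+1$ where the sliding chains are empty. (The displayed statement contains two typos — $F_{(m_{r-1},r)}$ should be $F_{(m_r-1,r)}$ and the final $q^{-1}1_{\w_n}$ should be $q^{-1}1_{\w_{n+1}}$ — which you have implicitly and correctly repaired.)
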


By Proposition \ref{Prop embedding Scn to Scn+1} and Lemma \ref{Lemma elements T}, 
we have the following corollary. 

\begin{cor}
\label{Cor identification of embedding}
The restriction of $\iota : \Sc_{n,r} \ra \Sc_{n+1,r}$ 
to 
$\He_{n,r} \cong 1_{\w_n} \Sc_{n,r} 1_{\w_n}$ 
coincides with  $\iota^\He : \He_{n,r} \ra \He_{n+1,r}$.
\end{cor}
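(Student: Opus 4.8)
The plan is to reduce the statement to a check on the algebra generators $T_0,\dots,T_{n-1}$ of $\He_{n,r}$, using the descriptions of the Hecke generators inside the Schur algebras given by Lemma~\ref{Lemma elements T}. First I would note that, by Proposition~\ref{Prop embedding Scn to Scn+1} together with $\w_{n+1}=\g(\w_n)$, we have $\iota(1_{\w_n})=1_{\g(\w_n)}=1_{\w_{n+1}}$, hence $\iota(1_{\w_n}s1_{\w_n})=1_{\w_{n+1}}\iota(s)1_{\w_{n+1}}$, so that $\iota$ restricts to an algebra homomorphism $1_{\w_n}\Sc_{n,r}1_{\w_n}\to 1_{\w_{n+1}}\Sc_{n+1,r}1_{\w_{n+1}}$. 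Transporting this through the isomorphisms $\He_{n,r}\cong 1_{\w_n}\Sc_{n,r}1_{\w_n}$ and $\He_{n+1,r}\cong 1_{\w_{n+1}}\Sc_{n+1,r}1_{\w_{n+1}}$ gives an algebra homomorphism $\psi\colon\He_{n,r}\to\He_{n+1,r}$, and the corollary asserts exactly that $\psi=\iota^\He$. Since $\He_{n,r}$ is generated by $T_0,\dots,T_{n-1}$ and $\iota^\He(T_i)=T_i$ for $0\le i\le n-1$, it suffices to prove $\psi(T_i)=T_i$ for these $i$.

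The main step is the identity
\[
\iota\bigl(1_{\w_n}\,F_{(i,k)}E_{(i,k)}\,1_{\w_n}\bigr)
= 1_{\w_{n+1}}\,F_{(i,k)}E_{(i,k)}\,1_{\w_{n+1}}
\qquad\bigl((i,k)\in\vG'(\Bm')\bigr).
\]
To prove it, apply \eqref{def inj hom Scn to Scn+1}: the left side becomes $1_{\w_{n+1}}(F_{(i,k)}\xi)(E_{(i,k)}\xi)1_{\w_{n+1}}$, and since $\w_{n+1}\in\vL^\g_{n+1,r}(\Bm)$ one has $\xi 1_{\w_{n+1}}=1_{\w_{n+1}}$, reducing it to $1_{\w_{n+1}}F_{(i,k)}\,\xi\,E_{(i,k)}1_{\w_{n+1}}$. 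By \eqref{S-5}, $1_{\w_{n+1}}F_{(i,k)}$ is either $0$ or $F_{(i,k)}1_{\w_{n+1}+\a_{(i,k)}}$; and the combinatorial point is that for $(i,k)\in\vG'(\Bm')$ the root $\a_{(i,k)}=\ve_{(i,k)}-\ve_{(i+1,k)}$ has vanishing $\ve_{(m_r,r)}$-component---both $(m_r,r)$ and $(m_r-1,r)$ being excluded from $\vG'(\Bm')$, and the index $(i+1,k)$, read with the convention $(m_k+1,k)=(1,k+1)$, never hitting $(m_r,r)$ since $m_r\ge n+1\ge 2$. Hence $\w_{n+1}+\a_{(i,k)}$, whenever it lies in $\vL_{n+1,r}(\Bm)$, also lies in $\vL^\g_{n+1,r}(\Bm)$, so $1_{\w_{n+1}+\a_{(i,k)}}\xi=1_{\w_{n+1}+\a_{(i,k)}}$, giving $1_{\w_{n+1}}F_{(i,k)}\xi=1_{\w_{n+1}}F_{(i,k)}$ and the identity.

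It remains to feed the formulas of Lemma~\ref{Lemma elements T}(i) for $T_0$ and for $T_i$ ($1\le i\le n-1$) through $\iota$. Since $(m_{r-1},r-1)\in\vG'(\Bm')$ and $(i,r)\in\vG'(\Bm')$ for $1\le i\le n-1$ (because $m_r\ge n+1$ forces $i\le m_r-2$), the displayed identity together with $\iota(1_{\w_n})=1_{\w_{n+1}}$ turns these formulas into precisely the formulas of Lemma~\ref{Lemma elements T}(ii) for $T_0$ and $T_i$; hence $\psi(T_i)=T_i=\iota^\He(T_i)$ for all $0\le i\le n-1$, and $\psi=\iota^\He$. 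I expect the only genuine difficulty to be the bookkeeping in the middle step---verifying the combinatorial remark under the index conventions and making sure the exceptional generator index $(m_{r-1},r-1)$ entering the formula for $T_0$ is covered---but this is routine.
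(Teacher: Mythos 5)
Your proof is correct and follows exactly the route the paper intends: the paper states the corollary as an immediate consequence of Proposition \ref{Prop embedding Scn to Scn+1} and Lemma \ref{Lemma elements T} without further detail, and your argument is precisely the verification on the generators $T_0,\dots,T_{n-1}$ that this entails. The combinatorial point that $\a_{(i,k)}$ has vanishing $\ve_{(m_r,r)}$-component for $(i,k)\in\vG'(\Bm')$, so that $\xi$ can be absorbed, is checked correctly.
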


\para 
Let 
$\HRes^{n+1}_n : \He_{n+1,r} \cmod \ra \He_{n,r} \cmod$ 
be the restriction functor through the injective homomorphism $\iota^{\He}$. 
We define the induction functor  
$\HInd^{n+1}_n : \He_{n,r} \cmod \ra \He_{n+1,r} \cmod$ 
by 
$\HInd^{n+1}_n = \He_{n+1,r} \otimes_{\He_{n,r}} ?$. 
Since 
$\He_{n,r}$ (resp. $\He_{n+1,r}$) 
is a symmetric algebra by \cite{MM}, 
$\HInd^{n+1}_n$ is isomorphic to the functor 
$\Hom_{\He_{n,r}}(\He_{n+1,r}, ?)$ 
(see \cite[Lemma 2.6]{S}). 
Thus, 
we see that 
$\HRes^{n+1}_n$ and $\HInd^{n+1}_n$ are exact, 
and 
$\HInd^{n+1}_n$ is left and right adjoint to $\HRes^{n+1}_n$. 

Recall the anti-involution $\ast$ on $\He_{n,r}$ (resp. $\He_{n+1,r}$), 
and 
we  consider the contravariant functor 
$\cas : \He_{n,r} \cmod \ra \He_{n,r} \cmod$ 
(resp.  $\cas : \He_{n+1,r} \cmod \ra \He_{n+1,r} \cmod$) 
with respect to $\ast$. 
Then, we have the following lemma.

\begin{lem}
We have the following isomorphisms of functors. 
\begin{enumerate}
\label{Lemma commute cas Om HRes HInd}
\item 
$\cas \circ \Om_n \cong \Om_n \circ \cas$ 
$($resp. $\cas \circ \Om_{n+1} \cong \Om_{n+1} \circ \cas)$. 

\item 
$\cas \circ \HRes^{n+1}_n \cong \HRes^{n+1}_n \circ \cas$. 

\item 
$\cas \circ \HInd^{n+1}_n \cong \HInd^{n+1}_n \circ \cas$. 
\end{enumerate}
\end{lem}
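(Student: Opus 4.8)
The plan is to treat the three parts separately. Part (ii) is immediate from the fact that $\iota^\He$ intertwines the anti-involutions $\ast$; part (i), which carries the only genuine content, comes down to the assertion that the Schur functor commutes with the two contravariant dualities; and part (iii) I will deduce formally from (ii) using the biadjointness of $\HInd^{n+1}_n$ and $\HRes^{n+1}_n$.

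For (ii): since $\iota^\He(T_i)=T_i$ and $T_i^\ast=T_i$ for $i=0,1,\dots,n-1$, the embedding $\iota^\He\colon\He_{n,r}\ra\He_{n+1,r}$ satisfies $\iota^\He(h^\ast)=\iota^\He(h)^\ast$ for all $h\in\He_{n,r}$. Hence for $M\in\He_{n+1,r}\cmod$ the two $\He_{n,r}$-modules $\cas\circ\HRes^{n+1}_n(M)$ and $\HRes^{n+1}_n\circ\cas(M)$ share the underlying $R$-module $\Hom_R(M,R)$ and carry the same $\He_{n,r}$-action, namely $(h\cdot\vf)(m)=\vf(\iota^\He(h^\ast)\cdot m)$, so the identity map is the desired natural isomorphism.

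For (i): first record $\theta_n(1_{\w_n})=1_{\w_n}$ (the case $\mu=\w_n$ of Lemma \ref{Lemma image theta}, using $1_{\w_n}=\vf_{T^{\w_n}T^{\w_n}}$), so that $\theta_n$ restricts to an anti-involution of $e\Sc_{n,r}e$, $e:=1_{\w_n}$; second, observe that under $\He_{n,r}\cong e\Sc_{n,r}e$, $\vf\mapsto\vf(m_{\w_n})$, this restricted anti-involution corresponds to $\ast$, which follows from $m_{\w_n}^\ast=m_{\w_n}$, $\theta_n(\vf_{ST})=\vf_{TS}$ and $m_{ST}^\ast=m_{TS}$, i.e. from the compatibility of the cellular bases of \cite{DJM98}. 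Granting this, for $M\in\Sc_{n,r}\cmod$ we have $\Om_n(M)\cong eM$ as $e\Sc_{n,r}e$-modules, and I would check that the $R$-linear map $eM^\cas\ra\Hom_R(eM,R)$ sending $e\cdot\vf$ to its restriction along $M\twoheadrightarrow eM$, $m\mapsto em$, is bijective (because $\theta_n(e)=e$ forces $e\cdot\vf$ to vanish on $(1-e)M$) and $e\Sc_{n,r}e$-linear for the $\cas$-twisted action on the target (using $\theta_n(ebe)=e\theta_n(b)e$); transporting along $\He_{n,r}\cong e\Sc_{n,r}e$ then gives $\cas\circ\Om_n\cong\Om_n\circ\cas$, naturally in $M$. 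The version for $\Om_{n+1}$ is identical with $n$ replaced by $n+1$ and $e$ by $1_{\w_{n+1}}$. Equivalently, one could mimic \eqref{iso Hom dual 1}, producing $\Hom_R(1_{\w_n}\Sc_{n,r},R)\cong(\Sc_{n,r}1_{\w_n})^\cas$ as $(\He_{n,r},\Sc_{n,r})$-bimodules and then applying the tensor--Hom adjunction, as in the proof of Lemma \ref{Lemma commute Res cas, Ind cas}. This identification of $\theta_n|_{e\Sc_{n,r}e}$ with $\ast$ is the one nontrivial input and the main obstacle; once it is in hand the rest is bookkeeping.

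For (iii): as $R$ is a field and all modules are finite-dimensional, $\cas\circ\cas\cong\Id$ on each category. Writing $F=\HInd^{n+1}_n$, $G=\HRes^{n+1}_n$, one has $F\dashv G$ and $G\dashv F$, so for $X\in\He_{n,r}\cmod$ and $Y\in\He_{n+1,r}\cmod$ there are natural isomorphisms
\begin{align*}
\Hom_{\He_{n+1,r}}(\cas F\cas X,\,Y)
&\cong\Hom_{\He_{n+1,r}}(\cas Y,\,F\cas X)
\cong\Hom_{\He_{n,r}}(G\cas Y,\,\cas X)\\
&\cong\Hom_{\He_{n,r}}(\cas GY,\,\cas X)
\cong\Hom_{\He_{n,r}}(X,\,GY)
\cong\Hom_{\He_{n+1,r}}(FX,\,Y),
\end{align*}
where the steps use, in order: $\cas$ is a duality, $G\dashv F$, part (ii), $\cas$ is a duality, $F\dashv G$. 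By the Yoneda lemma $\cas\circ F\circ\cas\cong F$, and composing once more with $\cas$ on the right and using $\cas\circ\cas\cong\Id$ gives $\cas\circ F\cong F\circ\cas$, which is (iii). Alternatively, (iii) can be proved directly in the manner of Lemma \ref{Lemma commute Res cas, Ind cas}(ii), via an isomorphism $\Hom_R(\He_{n+1,r},R)\cong(\He_{n+1,r})^\cas$ of $(\He_{n+1,r},\He_{n,r})$-bimodules arising from the compatibility of $\ast$ with $\iota^\He$.
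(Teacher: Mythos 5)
Your proof is correct. Parts (i) and (ii) are essentially the paper's own argument: (ii) is exactly the observation that $\ast$ on $\He_{n,r}$ is the restriction of $\ast$ on $\He_{n+1,r}$ along $\iota^\He$, and for (i) the paper proves the bimodule isomorphism \eqref{iso Hom dual S H}, $\Hom_R(1_{\w_n}\Sc_{n,r},R)\cong(\Sc_{n,r}1_{\w_n})^\cas$, ``in a similar way as in \eqref{iso Hom dual 1}'' and then applies tensor--Hom adjunction --- precisely the route you list as your ``equivalent'' alternative. Your primary, more hands-on identification of $1_{\w_n}M^\cas$ with $\Hom_R(1_{\w_n}M,R)$ has the merit of making explicit the one input the paper leaves implicit, namely that $\theta_n$ fixes $1_{\w_n}$ and that $\theta_n|_{1_{\w_n}\Sc_{n,r}1_{\w_n}}$ corresponds to $\ast$ under $1_{\w_n}\Sc_{n,r}1_{\w_n}\cong\He_{n,r}$; this same compatibility is what makes the paper's maps $G$ and $H$ right-$\He_{n,r}$-linear, so the content is identical. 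The genuine divergence is in (iii): the paper computes directly, $\cas\circ\HInd^{n+1}_n(M)\cong\Hom_{\He_{n,r}}(M,\Hom_R(\He_{n+1,r},R))\cong\Hom_{\He_{n,r}}(M,\He_{n+1,r}^\cas)\cong\Hom_{\He_{n,r}}(\He_{n+1,r},M^\cas)\cong\HInd^{n+1}_n\circ\cas(M)$, using the identification of $\HInd^{n+1}_n$ with coinduction, whereas you deduce (iii) formally from (ii), the biadjunction $\HInd^{n+1}_n\dashv\HRes^{n+1}_n\dashv\HInd^{n+1}_n$, and Yoneda. Both routes ultimately rest on the symmetric-algebra property of the Ariki--Koike algebras (the paper uses it to identify induction with coinduction, you use it through the biadjointness established just before the lemma); your version is more robust in that it applies verbatim to any biadjoint pair commuting with a duality on one side, at the cost of needing the chain of isomorphisms to be natural in both variables and $\cas\circ\cas\cong\Id$, both of which you correctly note hold here since $R$ is a field and all modules are finite dimensional.
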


\begin{proof}
We can prove that 
\begin{align}
\label{iso Hom dual S H}
\Hom_R(1_{\w_n} \Sc_{n,r}, R) \cong (\Sc_{n,r} 1_{\w_n})^\cas 
\quad 
\text{ as $(\Sc_{n,r}, \He_{n,r})$-bimodules} 
\end{align}
in a similar way as in \eqref{iso Hom dual 1}. 

For $M \in \He_{n,r} \cmod$, 
we have  the following natural isomorphisms 
\begin{align*}
\cas \circ \Om_n (M) 
&= 
\Hom_{R} (1_{\w_n} \Sc_{n,r} \otimes_{\Sc_{n,r}} M, R) 
\\
&\cong 
\Hom_{\Sc_{n,r}}(M, \Hom_R(1_{\w_n} \Sc_{n,r}, R))
\\
&\cong 
\Hom_{\Sc_{n,r}} (M, (\Sc_{n,r} 1_{\w_n})^\cas)  \quad (\text{because of } \eqref{iso Hom dual S H})
\\
&\cong 
\Hom_{\Sc_{n,r}}(\Sc_{n,r} 1_{\w_n}, M^\cas) 
\\
&\cong 
\Om_n \circ \cas (M), 
\end{align*}
and this implies (\roi) 
(it is similar for $\Om_{n+1}$).

The anti-involution $\ast$ on $\He_{n,r}$ is 
the restriction of the anti-involution $\ast$ on $\He_{n+1,r}$. 
Thus, we obtain (\roii). 

For $M \in \He_{n,r} \cmod$, 
we have  the following natural isomorphisms 
\begin{align*}
\cas \circ \HInd^{n+1}_n (M) 
&= 
\Hom_{R}(\He_{n+1,r} \otimes_{\He_{n,r}} M , R) 
\\
&\cong 
\Hom_{\He_{n,r}}(M, \Hom_{R} (\He_{n+1,r}. R))
\\
&\cong 
\Hom_{\He_{n,r}} (M, \He_{n+1}^\cas) 
\\
&\cong 
\Hom_{\He_{n,r}}(\He_{n+1,r}, M^\cas) 
\\
&\cong 
\HInd^{n+1}_n \circ \cas(M), 
\end{align*}
and this implies (\roiii). 
\end{proof}

We  have the following commutative relations 
for restriction, induction and  Schur functors.

\begin{prop}
\label{Prop commute Om Res Phi Ind}
We have the following isomorphisms of functors. 
\begin{enumerate}
\item 
$\Om_n \circ \Res^{n+1}_n \cong \HRes^{n+1}_n \circ \Om_{n+1}  $. 

\item 
$\coInd^{n+1}_n \circ \Phi_n \cong \Phi_{n+1} \circ \HInd^{n+1}_n$. 
\end{enumerate}
\end{prop}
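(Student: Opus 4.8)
The plan is to rewrite each of the four composites as a tensor or Hom functor attached to an explicit bimodule and then to manipulate them by associativity of tensor products and the tensor--Hom adjunction. Beyond such formalities the proof rests on three facts already at hand: first, $\iota(1_{\w_n}) = 1_{\w_{n+1}}$, which is immediate from $\w_{n+1} = \g(\w_n)$ together with $\iota(1_\la) = 1_{\g(\la)}$ (Proposition~\ref{Prop embedding Scn to Scn+1}); second, $\w_{n+1} \in \vL_{n+1,r}^\g(\Bm)$, so that $1_{\w_{n+1}}$ is one of the idempotents summing to $\xi$ and hence $1_{\w_{n+1}}\xi = 1_{\w_{n+1}} = \xi 1_{\w_{n+1}}$; and third, Corollary~\ref{Cor identification of embedding}, which says that under the identifications $\He_{n,r} \cong 1_{\w_n}\Sc_{n,r}1_{\w_n}$ and $\He_{n+1,r} \cong 1_{\w_{n+1}}\Sc_{n+1,r}1_{\w_{n+1}}$ the map $\iota$ restricts to $\iota^\He$.

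The technical core is one bimodule isomorphism. Since $1_{\w_n}\Sc_{n,r}$ is a direct summand of $\Sc_{n,r}$ as a right $\Sc_{n,r}$-module, for every unital left $\Sc_{n,r}$-module $M$ multiplication gives a natural isomorphism $1_{\w_n}\Sc_{n,r}\otimes_{\Sc_{n,r}}M \cong 1_{\w_n}\cdot M$. I would apply this to $M = \xi\Sc_{n+1,r}$, viewed as a $(\Sc_{n,r},\Sc_{n+1,r})$-bimodule through $\iota$ (unital because $\iota(1_{\Sc_{n,r}}) = \xi$), and then use $\iota(1_{\w_n})\,\xi\Sc_{n+1,r} = 1_{\w_{n+1}}\Sc_{n+1,r}$ to obtain a natural isomorphism
\[
1_{\w_n}\Sc_{n,r}\otimes_{\Sc_{n,r}}\xi\Sc_{n+1,r} \;\cong\; 1_{\w_{n+1}}\Sc_{n+1,r}
\]
of $(\He_{n,r},\Sc_{n+1,r})$-bimodules, in which the $\He_{n,r}$-action on the right-hand side is left multiplication through $\iota^\He$ --- this last identification is exactly where Corollary~\ref{Cor identification of embedding} enters.

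Granting this, part (i) is formal: for $M\in\Sc_{n+1,r}\cmod$,
\[
\Om_n\circ\Res^{n+1}_n(M)
= 1_{\w_n}\Sc_{n,r}\otimes_{\Sc_{n,r}}\bigl(\xi\Sc_{n+1,r}\otimes_{\Sc_{n+1,r}}M\bigr)
\cong \bigl(1_{\w_n}\Sc_{n,r}\otimes_{\Sc_{n,r}}\xi\Sc_{n+1,r}\bigr)\otimes_{\Sc_{n+1,r}}M,
\]
and substituting the displayed isomorphism turns this into $1_{\w_{n+1}}\Sc_{n+1,r}\otimes_{\Sc_{n+1,r}}M = \Om_{n+1}(M)$ carrying exactly the $\He_{n,r}$-structure obtained by restriction along $\iota^\He$, i.e.\ $\HRes^{n+1}_n\circ\Om_{n+1}(M)$. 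For part (ii) I would use the Hom-descriptions of $\Phi_n$, $\coInd^{n+1}_n$ and $\Phi_{n+1}$ together with $\HInd^{n+1}_n \cong \Hom_{\He_{n,r}}(\He_{n+1,r},?)$, which holds because $\He_{n,r}$ is symmetric; two applications of the tensor--Hom adjunction give, for $N\in\He_{n,r}\cmod$,
\[
\coInd^{n+1}_n\circ\Phi_n(N) \cong \Hom_{\He_{n,r}}\bigl(1_{\w_n}\Sc_{n,r}\otimes_{\Sc_{n,r}}\xi\Sc_{n+1,r},\,N\bigr),
\qquad
\Phi_{n+1}\circ\HInd^{n+1}_n(N) \cong \Hom_{\He_{n,r}}\bigl(1_{\w_{n+1}}\Sc_{n+1,r},\,N\bigr),
\]
the second via $\He_{n+1,r}\otimes_{\He_{n+1,r}}1_{\w_{n+1}}\Sc_{n+1,r}\cong 1_{\w_{n+1}}\Sc_{n+1,r}$; the displayed bimodule isomorphism then identifies the two right-hand sides, compatibly with the $\Sc_{n+1,r}$-action, which on both sides is the one inherited from right multiplication on $\Sc_{n+1,r}$.

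I expect the only real work to be bookkeeping of module structures: one must verify that each adjunction isomorphism respects the $\Sc_{n+1,r}$-structure on the target and the $\He_{n,r}$-structure paired against $N$, and above all that the $\He_{n,r}$-action produced in the displayed bimodule isomorphism is genuinely restriction along $\iota^\He$ rather than a twist of it. Corollary~\ref{Cor identification of embedding} is precisely what rules out such a twist, so I anticipate no conceptual difficulty, only the need to keep careful track of left/right structures throughout.
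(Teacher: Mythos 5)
Your proof is correct, and it rests on the same three ingredients as the paper's ($\iota(1_{\w_n})=1_{\w_{n+1}}$, $1_{\w_{n+1}}\xi=1_{\w_{n+1}}$, and Corollary~\ref{Cor identification of embedding}), but it packages them differently. The paper's key step is the mirror-image bimodule isomorphism $\Sc_{n+1,r}\xi\otimes_{\Sc_{n,r}}\Sc_{n,r}1_{\w_n}\cong\Sc_{n+1,r}1_{\w_{n+1}}$ of $(\Sc_{n+1,r},\He_{n,r})$-bimodules; it then proves (\roi) by writing $\Om_n$ as $\Hom_{\Sc_{n,r}}(\Sc_{n,r}1_{\w_n},?)$ and routing through the adjunction $\Ind^{n+1}_n\dashv\Res^{n+1}_n$, and it disposes of (\roii) in one line by observing that $\coInd^{n+1}_n\circ\Phi_n$ and $\Phi_{n+1}\circ\HInd^{n+1}_n$ are both right adjoint to the two sides of (\roi), so uniqueness of adjoints finishes. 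You instead establish the left-handed isomorphism $1_{\w_n}\Sc_{n,r}\otimes_{\Sc_{n,r}}\xi\Sc_{n+1,r}\cong 1_{\w_{n+1}}\Sc_{n+1,r}$ via $eA\otimes_A M\cong eM$, get (\roi) by pure associativity of tensor products, and prove (\roii) by explicitly converting both composites into $\Hom_{\He_{n,r}}(-,N)$ of the two bimodules. Your route is more computational and self-contained (no appeal to uniqueness of adjoints, though you still need the symmetric-algebra fact $\HInd^{n+1}_n\cong\Hom_{\He_{n,r}}(\He_{n+1,r},?)$, which the paper's (\roii) argument also implicitly uses to know $\Phi_{n+1}\circ\HInd^{n+1}_n$ is a right adjoint); the paper's is shorter but leans on abstract adjoint formalism. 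Your closing caution about a possible twist in the $\He_{n,r}$-action is exactly the right point to flag, and Corollary~\ref{Cor identification of embedding} does resolve it, just as it does in the paper's version of the bimodule isomorphism.
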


\begin{proof}
By Proposition \ref{Prop embedding Scn to Scn+1} and Corollary \ref{Cor identification of embedding}, 
we see that 
\begin{align}
\label{iso S S 1w S 1w}
\Sc_{n+1,r} \xi \otimes_{\Sc_{n,r}} \Sc_{n,r} 1_{\w_n} 
\cong 
\Sc_{n+1,r} 1_{\w_{n+1}} 
\quad 
\text{ as $(\Sc_{n+1,r}, \He_{n,r})$-bimodules}.
\end{align}

For $M \in \Sc_{n+1,r}\cmod$, 
we have the following natural isomorphisms 
\begin{align*}
\Om_n \circ \Res^{n+1}_n (M) 
&= 
\Hom_{\Sc_{n,r}} (\Sc_{n,r} 1_{\w_n}, \Res^{n+1}_n (M)) 
\\
&\cong 
\Hom_{\Sc_{n+1,r}} ( \Ind^{n+1}_n (\Sc_{n,r} 1_{\w_n}), M)
\\
&= 
\Hom_{\Sc_{n+1,r}} (\Sc_{n+1,r} \xi \otimes_{\Sc_{n,r}} \Sc_{n,r} 1_{\w_n} , M) 
\\
&\cong 
\HRes^{n+1}_n \big(  \Hom_{\Sc_{n+1,r}} (\Sc_{n+1,r} 1_{\w_{n+1}}, M)  \big) 
\quad (\text{because of } \eqref{iso S S 1w S 1w})
\\
&= 
\HRes^{n+1}_n \circ \Om_{n+1}(M), 
\end{align*}
and we obtain (\roi). 

We easily see that 
$\coInd^{n+1}_n \circ \Phi_n$
(resp. $\Phi_{n+1} \circ \HInd^{n+1}_n$) 
is right adjoint to 
$\Om_n \circ \Res^{n+1}_n$
(resp.  $\HRes^{n+1}_n \circ \Om_{n+1}$).  
Then, 
by the uniqueness of the adjoint functor together with (\roi),  
we obtain (\roii). 
\end{proof}

\para 
The rest of this section, 
we will prove the isomorphism of functors $\Ind^{n+1}_n \cong \coInd^{n+1}_n$. 
Our strategy is using the good properties 
for standard and costandard modules of quasi-hereditary algebras. 
For  $\Sc_{n,r} \cmod^\D$, 
we have the following two lemmas.

\begin{lem}
\label{Lemma exact Ind D}
For $M,N,L \in \Sc_{n,r} \cmod^\D$, 
and an exact sequence  
\[
 0 \ra N \ra M \ra L \ra 0 
\] 
as $\Sc_{n,r}$-modules, 
we have the exact sequence 
\[ 
0 \ra \Ind^{n+1}_n (N) \ra \Ind^{n+1}_n (M) \ra \Ind^{n+1}_n (L) \ra 0. 
\] 
\end{lem}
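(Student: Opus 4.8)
The plan is to deduce the exactness of $\Ind^{n+1}_n$ on the given sequence from the (easier) exactness of $\coInd^{n+1}_n$ on the $\cas$-dual sequence, and then transport the conclusion back through $\cas$.

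First I would note that $\Ind^{n+1}_n = \Sc_{n+1,r}\xi \otimes_{\Sc_{n,r}} ?$ is right exact, so applying it to $0 \to N \to M \to L \to 0$ already yields an exact sequence $\Ind^{n+1}_n(N) \to \Ind^{n+1}_n(M) \to \Ind^{n+1}_n(L) \to 0$; hence the only point is to prove that $\Ind^{n+1}_n(N) \to \Ind^{n+1}_n(M)$ is injective, i.e. that no homology is lost on the left. Next I would apply the exact contravariant functor $\cas$ (an exact involution on finitely generated modules, since $R$ is a field), obtaining a short exact sequence $0 \to L^\cas \to M^\cas \to N^\cas \to 0$ whose three terms now lie in $\Sc_{n,r}\cmod^\N$, by definition of $\N_n(\la) = \D_n(\la)^\cas$ and the fact that $\cas$ interchanges $\Sc_{n,r}\cmod^\D$ and $\Sc_{n,r}\cmod^\N$.

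The crucial step is then to show that $\coInd^{n+1}_n = \Hom_{\Sc_{n,r}}(\xi\Sc_{n+1,r}, ?)$ is exact on this sequence. Since $\coInd^{n+1}_n$ is always left exact, the long exact $\Ext$-sequence reduces this to checking $\Ext^1_{\Sc_{n,r}}(\xi\Sc_{n+1,r}, L^\cas) = 0$. For this I would use that $\xi\Sc_{n+1,r}$, viewed as a left $\Sc_{n,r}$-module, lies in $\Sc_{n,r}\cmod^\D$: this is essentially contained in the proof of Theorem \ref{Thm Res Ind Weyl}(iv), where $\xi\Sc_{n+1,r}$ was filtered by $(\Sc_{n,r},\Sc_{n+1,r})$-bimodules with sections $\Res^{n+1}_n(\D_{n+1}(\la_i)) \otimes_R \D^\sharp_{n+1}(\la_i)$, each of which is in $\Sc_{n,r}\cmod^\D$ by Theorem \ref{Thm Res Ind Weyl}(i). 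Since $\xi\Sc_{n+1,r}$ is finite-dimensional over $R$ and $\D$-filtered while $L^\cas$ is $\N$-filtered, the vanishing $\Ext^1_{\Sc_{n,r}}(\xi\Sc_{n+1,r}, L^\cas) = 0$ follows from the standard orthogonality between standard and costandard modules over a quasi-hereditary algebra (see e.g. \cite{D-book}). Thus $0 \to \coInd^{n+1}_n(L^\cas) \to \coInd^{n+1}_n(M^\cas) \to \coInd^{n+1}_n(N^\cas) \to 0$ is exact. Finally I would apply $\cas$ once more and invoke the isomorphism $\cas \circ \Ind^{n+1}_n \cong \coInd^{n+1}_n \circ \cas$ of Lemma \ref{Lemma commute Res cas, Ind cas}(ii) together with the natural double-duality isomorphism $\cas \circ \cas \cong \Id$, which identifies $\cas \circ \coInd^{n+1}_n(X^\cas)$ with $\Ind^{n+1}_n(X)$ for $X \in \{N,M,L\}$; this turns the exact sequence just produced into the desired $0 \to \Ind^{n+1}_n(N) \to \Ind^{n+1}_n(M) \to \Ind^{n+1}_n(L) \to 0$.

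The step I expect to be the real obstacle is the middle one: confirming that $\xi\Sc_{n+1,r}$ is $\D$-filtered as a left $\Sc_{n,r}$-module and that the classical $\Ext^1$-orthogonality for quasi-hereditary algebras applies here without modification (this is where finite-dimensionality over the field $R$ and the quasi-hereditary structure of $\Sc_{n,r}$ genuinely enter). A symmetric alternative would be to prove directly that $\mathrm{Tor}_1^{\Sc_{n,r}}(\Sc_{n+1,r}\xi, L) = 0$, using that $\Sc_{n+1,r}\xi$ is $\D$-filtered as a right $\Sc_{n,r}$-module — the right-module analogue of Theorem \ref{Thm Res Ind Weyl} obtained by $R$-duality — but routing through $\coInd^{n+1}_n$ and $\cas$ avoids re-establishing a right-handed version of \S3.
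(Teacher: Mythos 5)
Your proposal is correct and follows essentially the same route as the paper: dualize via $\cas$ to a short exact sequence of $\N$-filtered modules, use $\xi\Sc_{n+1,r}\in\Sc_{n,r}\cmod^\D$ (established via the bimodule filtration in the proof of Theorem \ref{Thm Res Ind Weyl}) together with the $\Ext^1$-orthogonality of Donkin's Proposition A2.2 to get exactness of $\coInd^{n+1}_n$, and then dualize back using Lemma \ref{Lemma commute Res cas, Ind cas}~(ii). The only difference is that you spell out the justification that $\xi\Sc_{n+1,r}$ is $\D$-filtered, which the paper takes as already established.
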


\begin{proof}
By applying the functor $\cas$ to this sequence, 
we have the exact sequence 
$0 \ra L^\cas \ra M^\cas \ra N^\cas \ra 0$. 
Since $\xi \Sc_{n+1,r} \in \Sc_{n,r}^\D$ and $L^\cas \in \Sc_{n,r} \cmod^\N$, 
we have 
$\Ext^1_{\Sc_{n,r}}(\xi \Sc_{n+1,r}, L^\cas)=0$ 
by \cite[Proposition A2.2]{D-book}. 
Thus, we have the exact sequence 
$0 \ra \coInd^{n+1}_n (L^\cas) \ra \coInd^{n+1}_n (M^\cas) \ra \coInd^{n+1}_n (N^\cas) \ra 0$. 
By applying the functor $\cas$ to this sequence 
together with Lemma \ref{Lemma commute Res cas, Ind cas} (\roii), 
we have the exact sequence 
$ 0 \ra \Ind^{n+1}_n (N) \ra \Ind^{n+1}_n (M) \ra \Ind^{n+1}_n (L) \ra 0$. 
\end{proof}

\begin{lem}
\label{Lemma comparsion dimensions}
For $M \in \Sc_{n,r} \cmod^\D$, we have the following. 
\begin{enumerate}
\item 
$\dim \Ind^{n+1}_n (M) \geq \dim \coInd^{n+1}_n (M)$. 

\item 
$\dim \HInd^{n+1}_n \circ \Om_n(M) \geq \dim \Om_{n+1} \circ \coInd^{n+1}_n(M)$.   
\end{enumerate}
\end{lem}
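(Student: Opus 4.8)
The plan is to reduce both inequalities to the case $M = \D_n(\mu)$ for a single $r$-partition $\mu \in \vL_{n,r}^+$, and then to compare with the $\cas$-dual statement using Theorem \ref{Thm Res Ind Weyl}. First I would observe that both sides of (\roi) are additive on short exact sequences in $\Sc_{n,r}\cmod^\D$: the left side because $\Ind^{n+1}_n$ is exact on such sequences by Lemma \ref{Lemma exact Ind D}, the right side because $\coInd^{n+1}_n = \Hom_{\Sc_{n,r}}(\xi\Sc_{n+1,r}, ?)$ is left exact in general, so that $\dim\coInd^{n+1}_n$ is subadditive on a $\D$-filtration. Hence, writing $M$ with a filtration whose subquotients are $\D_n(\mu_j)$, it suffices to prove $\dim\Ind^{n+1}_n(\D_n(\mu)) \geq \dim\coInd^{n+1}_n(\D_n(\mu))$ for each $\mu$.

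For this, I would use $\cas$. By Lemma \ref{Lemma commute Res cas, Ind cas}(\roii) we have $\cas\circ\Ind^{n+1}_n \cong \coInd^{n+1}_n\circ\cas$, and $\cas$ preserves dimension, so $\dim\coInd^{n+1}_n(\N_n(\mu)) = \dim\Ind^{n+1}_n(\D_n(\mu))$. On the other hand, by Lemma \ref{Lemma Ind D Ind N in K0} we have $[\Ind^{n+1}_n(\D_n(\mu))] = [\Ind^{n+1}_n(\N_n(\mu))]$ in $K_0(\Sc_{n+1,r}\cmod)$, so in particular $\dim\Ind^{n+1}_n(\N_n(\mu)) = \dim\Ind^{n+1}_n(\D_n(\mu))$. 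Therefore the desired inequality $\dim\Ind^{n+1}_n(\D_n(\mu)) \geq \dim\coInd^{n+1}_n(\D_n(\mu))$ is equivalent to $\dim\coInd^{n+1}_n(\N_n(\mu)) \geq \dim\coInd^{n+1}_n(\D_n(\mu))$. Now apply $\cas$ once more: $\dim\coInd^{n+1}_n(\D_n(\mu)) = \dim\coInd^{n+1}_n((\N_n(\mu))^\cas)$; since $\cas$ is exact and carries a $\N$-filtration of $\N_n(\mu)$ to a $\D$-filtration of $(\N_n(\mu))^\cas = \D_n(\mu)$, and $\coInd^{n+1}_n$ is left exact, I get $\dim\coInd^{n+1}_n(\D_n(\mu)) \leq \dim\coInd^{n+1}_n(\N_n(\mu))$ directly from subadditivity of $\dim\coInd^{n+1}_n$ on $\D$-filtrations together with the fact that $\coInd^{n+1}_n$ sends costandard modules to modules with $\N$-filtration (Theorem \ref{Thm Res Ind Weyl}(\roiv)), whose dimensions I can read off. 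Concretely, I would compute both dimensions from Theorem \ref{Thm Res Ind Weyl}: $\Ind^{n+1}_n(\D_n(\mu))$ has a $\D$-filtration with subquotients $\D_{n+1}(\mu\cup x)$ over addable nodes $x$, and $\coInd^{n+1}_n(\N_n(\mu))$ has a $\N$-filtration with subquotients $\N_{n+1}(\mu\cup x)$ over the same set, and $\dim\D_{n+1}(\mu\cup x) = \dim\N_{n+1}(\mu\cup x)$; so in fact equality holds at the level of $\D_n(\mu)$ and $\N_n(\mu)$, and the inequality for general $M \in \Sc_{n,r}\cmod^\D$ comes purely from exactness of $\Ind^{n+1}_n$ versus mere left-exactness of $\coInd^{n+1}_n$.

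Part (\roii) follows the same pattern, now using the Schur functor $\Om$. Applying the exact functor $\Om_n$ (resp. $\Om_{n+1}$) preserves the additivity/subadditivity structure, and the key input is Proposition \ref{Prop commute Om Res Phi Ind}(\roi), $\Om_n\circ\Res^{n+1}_n \cong \HRes^{n+1}_n\circ\Om_{n+1}$, together with an adjunction argument: $\HInd^{n+1}_n\circ\Om_n$ is left adjoint to $\Om_n\circ\Res^{n+1}_n \cong \HRes^{n+1}_n\circ\Om_{n+1}$ (using that $\Om_{n+1}\circ\Ind^{n+1}_n$ is left adjoint to the latter as well, by uniqueness of adjoints one gets a comparison map $\Om_{n+1}\circ\Ind^{n+1}_n \to \HInd^{n+1}_n\circ\Om_n$). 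Alternatively, and more in the spirit of (\roi), I would reduce to $M = \D_n(\mu)$, use that $\Om_{n+1}\D_{n+1}(\la)$ is the Specht module $S^\la$ of $\He_{n+1,r}$ with $\dim\Om_{n+1}\D_{n+1}(\la)$ independent of the choice of standard versus costandard, and then read off $\dim\HInd^{n+1}_n\Om_n\D_n(\mu)$ from the branching rule for Specht modules of Ariki--Koike algebras (a $\HInd$-filtration with subquotients $S^{\mu\cup x}$) against $\dim\Om_{n+1}\coInd^{n+1}_n\N_n(\mu)$, read off from Theorem \ref{Thm Res Ind Weyl}(\roiv) followed by exactness of $\Om_{n+1}$. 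I expect the main obstacle to be bookkeeping: being careful that "$\dim\coInd^{n+1}_n$ is subadditive on $\D$-filtrations" is used correctly — it requires that a $\D$-filtration of $M$ gives, via the long exact $\Ext$ sequence for $\Hom_{\Sc_{n,r}}(\xi\Sc_{n+1,r}, ?)$, only an inequality and not equality, which is precisely where the asymmetry between $\Ind^{n+1}_n$ (exact on $\D$) and $\coInd^{n+1}_n$ (only left exact) is exploited; and in part (\roii), checking that $\Om_{n+1}$ applied to the $\N$-filtration of $\coInd^{n+1}_n\N_n(\mu)$ behaves well, which it does since $\Om_{n+1}$ is exact.
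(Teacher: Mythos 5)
Your overall strategy coincides with the paper's: establish the equality $\dim \Ind^{n+1}_n(\D_n(\mu)) = \dim \coInd^{n+1}_n(\D_n(\mu))$ for a single standard module using Lemma \ref{Lemma Ind D Ind N in K0} together with $\cas \circ \Ind^{n+1}_n \cong \coInd^{n+1}_n \circ \cas$, then propagate to all of $\Sc_{n,r}\cmod^\D$ by induction on the length of a $\D$-filtration, playing exactness of $\Ind^{n+1}_n$ on such sequences (Lemma \ref{Lemma exact Ind D}) against mere left-exactness of $\coInd^{n+1}_n$; and, for the second part, identify dimensions via Specht modules and the branching rule of [AM]. Two local missteps are worth flagging. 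In (i), your passage from $\dim \Ind^{n+1}_n(\D_n(\mu)) = \dim \coInd^{n+1}_n(\N_n(\mu))$ to a statement about $\dim \coInd^{n+1}_n(\D_n(\mu))$ is muddled: the ``subadditivity on a $\D$-filtration of $\D_n(\mu)$'' step is vacuous (that filtration is trivial), and the concrete computation via Theorem \ref{Thm Res Ind Weyl} (iii) and (iv) only re-derives the equality you already had. The clean step --- and what the paper does --- is to apply the isomorphism $\cas \circ \Ind^{n+1}_n \cong \coInd^{n+1}_n \circ \cas$ to $\N_n(\mu)$ rather than to $\D_n(\mu)$, which gives $\dim \coInd^{n+1}_n(\D_n(\mu)) = \dim \Ind^{n+1}_n(\N_n(\mu))$; Lemma \ref{Lemma Ind D Ind N in K0} then finishes. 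In (ii), discard the adjunction suggestion: $\HInd^{n+1}_n \circ \Om_n$ is not left adjoint to $\Om_n \circ \Res^{n+1}_n$ (the left adjoint of $\Om_n$ is $\Sc_{n,r}1_{\w_n} \otimes_{\He_{n,r}} ?$, not $\Phi_n$, and the sources and targets do not even match), and the isomorphism $\Om_{n+1} \circ \Ind^{n+1}_n \cong \HInd^{n+1}_n \circ \Om_n$ is only established later (Corollary \ref{Cor properties Res Ind}), using the present lemma, so it cannot be invoked here. Your stated alternative is the correct route and is the paper's: prove $\dim \Om_{n+1} \circ \Ind^{n+1}_n(M) \geq \dim \Om_{n+1} \circ \coInd^{n+1}_n(M)$ exactly as in (i), and then identify $\dim \HInd^{n+1}_n \circ \Om_n(M)$ with $\dim \Om_{n+1} \circ \Ind^{n+1}_n(M)$ via $\Om_n(\D_n(\la)) \cong S^\la$, Theorem \ref{Thm Res Ind Weyl} (iii), and the branching rule for Specht modules.
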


\begin{proof}
By Lemma \ref{Lemma Ind D Ind N in K0}, 
we have that 
$\dim \Ind^{n+1}_n (\D_n(\la)) = \dim \Ind^{n+1}_n (\N_n(\la))$ 
for $\la \in \vL_{n,r}^+$. 
Since 
$\Ind^{n+1}_n (\N_n(\la)) \cong \Ind^{n+1}_n (\D_n(\la)^\cas) \cong \cas \circ \coInd^{n+1}_n (\D_n(\la))$ 
by Lemma \ref{Lemma commute Res cas, Ind cas} (\roii), 
we have that 
$\dim \Ind^{n+1}_n (\D_n(\la)) = \dim \coInd^{n+1}_n (\D_n(\la))$. 

Since $M \in \Sc_{n,r} \cmod^\D$, 
we can take a exact sequence 
\begin{align}
\label{seq N1 M N2} 
0 \ra N_1 \ra M \ra N_2  \ra 0  
\text{ such that } 
N_1, N_2 \in \Sc_{n,r}\cmod^\D.
\end{align} 

By Lemma \ref{Lemma exact Ind D}, 
we have the exact sequence 
\[
0 \ra \Ind^{n+1}_n (N_1) \ra \Ind^{n+1}_n (M) \ra \Ind^{n+1}_n (N_2) \ra 0,
\] 
and we have 
\begin{align}
\label{dim Ind M N1 N2}
\dim \Ind^{n+1}_n (M) = \dim \Ind^{n+1}_n (N_1) + \dim \Ind^{n+1}_n (N_2).
\end{align} 

On the other hand, 
by applying the left exact functor $\coInd^{n+1}_n$ to the sequence \eqref{seq N1 M N2}, 
we have the exact sequence 
$0 \ra \coInd^{n+1}_n (N_1) \ra \coInd^{n+1}_n (M) \ra \coInd^{n+1}_n (N_2)$,  
and we have 
\begin{align} 
\label{dim coInd M N1 N2}
\dim \coInd^{n+1}_n (M) \leq \dim \coInd^{n+1}_n (N_1) + \dim \coInd^{n+1}_n (N_2).
\end{align} 
By the induction on the length of $\D$-filtration of $M$, 
\eqref{dim Ind M N1 N2} and \eqref{dim coInd M N1 N2} 
imply (\roi).

By a similar way as in (\roi), 
we can prove that 
\begin{align}
\label{dim Om Ind M Om coInd M} 
\dim \Om_{n+1} \circ \Ind^{n+1}_n (M) \geq \dim \Om_{n+1} \circ \coInd^{n+1}_n (M). 
\end{align}
On the other hand, 
it is known that 
$\Om_n(\D_n(\la))$ is isomorphic to the Specht module $S^\la$ defined in \cite{DJM98}.  
Thus, 
by Theorem \ref{Thm Res Ind Weyl} (\roiii) and \cite[Cororally 1.10]{AM}, 
we have 
\[ 
\dim \HInd^{n+1}_n \circ \Om_n(M) = \dim \Om_{n+1} \circ \Ind^{n+1}_n(M).
\]
Combining this equation with \eqref{dim Om Ind M Om coInd M}, 
we obtain (\roii). 
\end{proof}

We prepare the following general results. 

\begin{lem}
\label{Lemma lift morphism of functors}
Let $\A$, $\B$ be finite dimensional algebras over a field, 
$I : \A \cproj \ra \A \cmod$ 
be the canonical embedding functor, 
and 
$F,G$ be functors from $\A \cmod $ to $\B \cmod$. 
Then we have the following. 

\begin{enumerate}
\item 
If $F$ is a right exact functor, 
the homomorphism of vector spaces 
\[ 
\Hom (F,G) \ra \Hom (F \circ I, G \circ I) 
\text{ given by } \nu \mapsto \nu 1_{I}  
\]
is an isomorphism. 

\item 
Assume that $F$ and $G$ are right exact functors. 
If $F \circ I \cong G \circ I$, 
we have $F \cong G$.  

\item 
Assume that $\A$ is a quasi-hereditary algebra, 
and let $I^\D : \A \cmod^\D \ra \A \cmod$ be the canonical embedding functor. 
If $F$ is a right exact functor, 
the homomorphism of vector spaces 
\[ 
\Hom (F,G) \ra \Hom (F \circ I^\D, G \circ I^\D) 
\text{ given by } \nu \mapsto \nu 1_{I^\D}  
\]
is an isomorphism. 

\end{enumerate}
\end{lem}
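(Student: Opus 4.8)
The plan is to exploit the classical principle that a right exact functor between module categories is determined, up to natural isomorphism, by its restriction to projective modules, and then to transfer the same bookkeeping to $\D$-filtered modules for \roiii.

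For \roi, note first that $\nu \mapsto \nu 1_I$ is just restriction, $(\nu 1_I)_P = \nu_P$ for $P \in \A\cproj$, so the task is to construct an inverse. Since $\A$ is finite dimensional, every $M \in \A\cmod$ admits a projective presentation $P_1 \xra{d} P_0 \xra{\pi} M \ra 0$, and as $F$ is right exact, $F(\pi)$ is a cokernel of $F(d)$ (likewise for $G$). Given $\tau : F\circ I \ra G\circ I$, naturality of $\tau$ yields $G(\pi)\circ\tau_{P_0}\circ F(d) = G(\pi)\circ G(d)\circ\tau_{P_1} = 0$, so $G(\pi)\circ\tau_{P_0}$ factors uniquely through $F(\pi)$, defining $\wt\tau_M : F(M) \ra G(M)$. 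I would then check, in this order: (a) $\wt\tau_M$ is independent of the presentation; (b) $\wt\tau$ is natural in $M$; (c) $\wt\tau_P = \tau_P$ for projective $P$; (d) the construction applied to $\nu 1_I$ returns $\nu$ for any $\nu : F \ra G$. Here (a) and (b) follow from a single diagram chase: lift a morphism $f : M \ra N$ (or $\id_M$, for (a)) to a chain map between the chosen presentations, which exists by projectivity, and cancel the epimorphism $F(\pi)$ on the right; (c) uses the presentation $P \xra{0} P \xra{\id} P \ra 0$; and (d) is immediate, since the naturality square of $\nu$ at $\pi$ exhibits $\nu_M$ as precisely the factorization of $G(\pi)\circ\nu_{P_0}$ through $F(\pi)$. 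Together (c) and (d) say that $\nu \mapsto \nu 1_I$ and $\tau \mapsto \wt\tau$ are mutually inverse, which is \roi.

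For \roii, given an isomorphism $\tau : F\circ I \ra G\circ I$ with inverse $\sigma$, I would apply \roi\ to the pairs $(F,G)$ and $(G,F)$ to lift $\tau,\sigma$ to $\wt\tau : F \ra G$ and $\wt\sigma : G \ra F$. Since $\wt\sigma\circ\wt\tau$ and $\id_F$ both restrict to $\sigma\circ\tau = \id$ on $\A\cproj$, the injectivity in \roi\ (applied to $(F,F)$) forces $\wt\sigma\circ\wt\tau = \id_F$, and symmetrically $\wt\tau\circ\wt\sigma = \id_G$; hence $\wt\tau$ is an isomorphism and $F \cong G$.

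For \roiii, the only new input is that over a quasi-hereditary algebra every projective module has a $\D$-filtration, so $\A\cproj \subseteq \A\cmod^\D$ and the projective presentations used in \roi\ already lie in $\A\cmod^\D$. Injectivity of $\nu \mapsto \nu 1_{I^\D}$ is then inherited from \roi, since such a $\nu$ vanishes in particular on $\A\cproj$. For surjectivity I would take $\tau : F\circ I^\D \ra G\circ I^\D$, restrict it further to $\A\cproj$, lift by \roi\ to $\wt\tau : F \ra G$, and verify $\wt\tau 1_{I^\D} = \tau$: for $M \in \A\cmod^\D$ the naturality of $\tau$ at the morphism $\pi : P_0 \ra M$ of $\A\cmod^\D$ shows that $\tau_M$ factors $G(\pi)\circ\tau_{P_0}$ through $F(\pi)$, and by uniqueness of this factorization $\tau_M = \wt\tau_M$. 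The main obstacle is the diagram chase underlying (a) and (b) in \roi: once the chain-map lift between presentations is fixed everything is formal, but one must take care that every face of the comparison diagram commutes before cancelling the epimorphism $F(\pi)$; the remainder is routine bookkeeping.
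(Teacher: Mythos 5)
Your proposal is correct and follows essentially the same route as the paper: parts (\roi) and (\roii) are the standard argument (the paper delegates them to Shan's Lemma 1.2) that a right exact functor, and any natural transformation out of it, is determined by its restriction to $\A\cproj$ via projective presentations, and part (\roiii) rests on the same observation that projectives lie in $\A\cmod^\D$ so the presentations already live there. The only cosmetic difference is that the paper organizes (\roiii) as a two-out-of-three argument on the commutative triangle $\Hom(F,G)\ra\Hom(F\circ I^\D,G\circ I^\D)\ra\Hom(F\circ I,G\circ I)$, whereas you verify injectivity and surjectivity of the first map directly; the underlying mechanism is identical.
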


\begin{proof}
We can prove (\roi) and (\roii) in a similar way as in \cite[Lemma 1.2]{S}. 
We prove (\roiii). 

Since any projective $\A$-module is an object of $\A \cmod^\D$, 
we have $I^\D \circ I \cong I$, and 
we have the following commutative diagram. 
\begin{align*}
\xymatrix{
\Hom(F,G) \ar[rr]^{\nu \mapsto \nu 1_I} \ar[dr]_{\nu \mapsto \nu 1_{I^\D}} 
	&& \Hom(F \circ I, G \circ I) 
\\
& \Hom(F \circ I^\D, G \circ I^\D) \ar[ur]_{\t \mapsto \t 1_I}
}
\end{align*}
By (\roi), 
$\Hom(F,G) \ra \Hom (F \circ I, G \circ I)$ is an isomorphism. 
We can also prove that 
$\Hom(F\circ I^\D, G \circ I^\D) \ra \Hom (F \circ I, G \circ I)$ 
is an isomorphism in a similar way. 
Thus, the above diagram implies (\roiii). 
\end{proof}

\para 
Let 
$I_n^\D : \Sc_{n,r} \cmod^\D \ra \Sc_{n,r} \cmod $ 
(resp. $I_n^\N : \Sc_{n,r} \cmod^\N \ra \Sc_{n,r} \cmod $) 
be the canonical embedding functor. 
Then, we have the following proposition, 
which is a key step to prove the isomorphism $\Ind^{n+1}_n \cong \coInd^{n+1}_n$.

\begin{prop}
We have an isomorphism of functors 
\[
 \Ind^{n+1}_n \circ I_n^\D \cong \coInd^{n+1}_n \circ I_n^\D. 
\]
\end{prop}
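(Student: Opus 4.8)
The plan is to construct a morphism of functors $\Theta\colon\Ind^{n+1}_n\Rightarrow\coInd^{n+1}_n$ and then to prove that $\Theta_M$ is an isomorphism for every $M\in\Sc_{n,r}\cmod^{\D}$; restricting along $I_n^\D$ yields the proposition. By Lemma~\ref{Lemma lift morphism of functors}~(iii), a morphism $\Ind^{n+1}_n\circ I_n^\D\Rightarrow\coInd^{n+1}_n\circ I_n^\D$ is the same datum as a morphism $\Ind^{n+1}_n\Rightarrow\coInd^{n+1}_n$, so there is no loss in working with the unrestricted functors.

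For the construction of $\Theta$ I would use the direct sum decomposition $\xi\Sc_{n+1,r}\xi=\iota(\Sc_{n,r})\oplus M(\not\leq\xi)$ obtained in the proof of Proposition~\ref{Prop embedding Scn to Scn+1} (see \eqref{image Scn} and \eqref{decom xi Sc xi}), and let $\pi\colon\xi\Sc_{n+1,r}\xi\to\Sc_{n,r}$ be the projection onto $\iota(\Sc_{n,r})$ followed by $\iota\iv$. The crucial point — and the main technical obstacle — is that $\pi$ is a homomorphism of $(\Sc_{n,r},\Sc_{n,r})$-bimodules, i.e. that $M(\not\leq\xi)$ is stable under left and right multiplication by $\iota(\Sc_{n,r})$. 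I would verify this on the algebra generators $\xi$, $E^{(l)}_{(i,k)}\xi$, $F^{(l)}_{(i,k)}\xi$ with $(i,k)\in\vG'(\Bm')$: a spanning element of $M(\not\leq\xi)$ has the form $\xi\,x\,1_\mu\,y\,\xi$ with $x\in\Sc^-_{n+1,r}$, $y\in\Sc^+_{n+1,r}$ and $\mu^{(r)}_{m_r}\neq 1$, and commuting an $E_{(i,k)}$ rightwards through $x$ by means of \eqref{S-6} produces only terms of the same shape: the weight of the middle idempotent is altered only by roots $\a_{(j,l)}$ with $(j,l)\in\vG'(\Bm')$, none of which involves $\ve_{(m_r,r)}$, no $E_{(m_r-1,r)}$ or $F_{(m_r-1,r)}$ is pushed through (since $E_{(i,k)}$ commutes with $F_{(m_r-1,r)}$ for $(i,k)\neq(m_r-1,r)$), and the ``diagonal'' corrections $g^\nu_{(j,l)}(F,E)1_\nu=\s^\nu_{(j,l)}$ preserve the weight $\nu$; hence every term has middle weight with $(m_r,r)$-coordinate $\neq 1$, so by the analysis in the proof of Proposition~\ref{Prop embedding Scn to Scn+1} it lies in $M(\not\leq\xi)$ or vanishes. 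The cases of $F_{(i,k)}$ and of right multiplication are the same. Granting this, I set
\[
\Theta_N\colon\ \Sc_{n+1,r}\xi\otimes_{\Sc_{n,r}}N\ \longrightarrow\ \Hom_{\Sc_{n,r}}\bigl(\xi\Sc_{n+1,r},N\bigr),\qquad s\xi\otimes n\ \longmapsto\ \bigl(\,\xi t\mapsto\pi(\xi t s\xi)\cdot n\,\bigr),
\]
and the bimodule property of $\pi$ makes $\Theta_N$ well defined, left $\Sc_{n+1,r}$-linear and natural in $N$.

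Next I would reduce the claim ``$\Theta_M$ is an isomorphism for all $M\in\Sc_{n,r}\cmod^{\D}$'' to the single case $M=\D_n(\la)$, by induction on the length of a $\D$-filtration. Given a short exact sequence $0\to\D_n(\la)\to M\to M'\to 0$ in $\Sc_{n,r}\cmod^{\D}$ realizing the bottom step of such a filtration, Lemma~\ref{Lemma exact Ind D} gives the exact row $0\to\Ind^{n+1}_n(\D_n(\la))\to\Ind^{n+1}_n(M)\to\Ind^{n+1}_n(M')\to 0$, while left exactness of $\coInd^{n+1}_n$ gives $0\to\coInd^{n+1}_n(\D_n(\la))\to\coInd^{n+1}_n(M)\to\coInd^{n+1}_n(M')$; assuming inductively that $\Theta_{\D_n(\la)}$ and $\Theta_{M'}$ are isomorphisms, a diagram chase on the resulting commutative ladder shows that $\Theta_M$ is injective and that its image both contains $\coInd^{n+1}_n(\D_n(\la))$ and surjects onto $\coInd^{n+1}_n(M')$, hence that $\Theta_M$ is an isomorphism.

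It remains to handle the base case $M=\D_n(\la)$. From Lemma~\ref{Lemma Ind D Ind N in K0} together with the isomorphism $\Ind^{n+1}_n(\N_n(\la))\cong\bigl(\coInd^{n+1}_n(\D_n(\la))\bigr)^\cas$ coming from Lemma~\ref{Lemma commute Res cas, Ind cas}~(ii), one gets $\dim\Ind^{n+1}_n(\D_n(\la))=\dim\coInd^{n+1}_n(\D_n(\la))$, so it suffices to prove that $\Theta_{\D_n(\la)}$ is surjective. Since $\D_n(\la)$ is generated by $\vf_{T^\la}$ and $\iota(1_{\Sc_{n,r}})=\xi$, the module $\Ind^{n+1}_n(\D_n(\la))$ is cyclic, generated by $w:=\xi\otimes\vf_{T^\la}$, so it is enough to show that $\Theta_{\D_n(\la)}(w)$ generates $\coInd^{n+1}_n(\D_n(\la))$; this I would verify by following $w$ through the $\D$-filtration of $\Ind^{n+1}_n(\D_n(\la))$ provided by Theorem~\ref{Thm Res Ind Weyl}~(iii) and comparing it, section by section, with the corresponding filtration of $\coInd^{n+1}_n(\D_n(\la))$, using \eqref{definition Txla} and the identity $T^{\la\cup x_i}_{x_i}\setminus(m_r,r)=T^\la$ to pin down the image of $w$ in each step. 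The two steps where real work is needed are the bimodule property of $\pi$ and this last verification; all the rest is formal manipulation with the quasi-hereditary filtrations and with the dimension identities already at hand.
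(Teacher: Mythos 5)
Your strategy is genuinely different from the paper's: the paper never constructs an explicit natural transformation $\Ind^{n+1}_n\Rightarrow\coInd^{n+1}_n$, but instead passes through the Hecke algebra via the Schur functors, first proving $\HInd^{n+1}_n\circ\Om_n\circ I_n\cong\Om_{n+1}\circ\coInd^{n+1}_n\circ I_n$, lifting this to a morphism $\nu$ of functors by Lemma \ref{Lemma lift morphism of functors}, proving $\nu$ is invertible on $\Sc_{n,r}\cmod^\D$ by induction on projective dimension with Lemma \ref{Lemma comparsion dimensions}~(ii), then transporting back through $\Phi_{n+1}\circ\Om_{n+1}$ using \eqref{Phi Om I iso Id}. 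Your route, if it worked, would be more self-contained, but as written it has two genuine gaps.

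First, the bimodule property of $\pi$ is not established by the argument you sketch. The obstruction is exactly the relation \eqref{S-6} at $(i,k)=(m_k,k)$: the correction term $\eta^{\mu'}_{(m_k,k)}$ involves $\s^{\mu'}_{(m_k,k)}=g^{\mu'}_{(m_k,k)}(F,E)1_{\mu'}$, which preserves the weight $\mu'$ only as an operator; it is \emph{not} in triangular form, and for an arbitrary intermediate weight $\mu'\in\vL_{n+1,r}(\Bm)$ (not in $\Im\g$, so Lemma \ref{Lemma relation JM} does not apply) its re-straightening may involve $E_{(m_r-1,r)}$ and $F_{(m_r-1,r)}$, whose root does touch $\ve_{(m_r,r)}$. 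So your claim that "the middle idempotent is altered only by roots $\a_{(j,l)}$ with $(j,l)\in\vG'(\Bm')$" is false as stated. The statement you want is probably still true, but the correct mechanism is different: under straightening, all middle weights $\mu''$ occurring in $\iota(a)\,\xi x 1_\mu y\,\xi\,\iota(b)$ satisfy $\mu''\geq\mu$, and the defining condition of $M(\not\leq\xi)$ ($\mu\not\leq\la$ for all $\la\in\vL^\g_{n+1,r}(\Bm)$) is upward closed in the dominance order; you would need to prove this monotonicity cleanly. Second, and more seriously, the base case is circular: your surjectivity argument for $\Theta_{\D_n(\la)}$ appeals to "the corresponding filtration of $\coInd^{n+1}_n(\D_n(\la))$", but no standard filtration of $\coInd^{n+1}_n(\D_n(\mu))$ is available at this point — Theorem \ref{Thm Res Ind Weyl}~(iv) only gives a costandard filtration of $\coInd^{n+1}_n(\N_n(\mu))$, and producing a $\D$-filtration of $\coInd^{n+1}_n(\D_n(\mu))$ is essentially equivalent to the proposition itself. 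You need an independent proof that the single element $\Theta_{\D_n(\la)}(\xi\otimes\vf_{T^\la})$ generates $\Hom_{\Sc_{n,r}}(\xi\Sc_{n+1,r},\D_n(\la))$ over $\Sc_{n+1,r}$ (or of injectivity of $\Theta_{\D_n(\la)}$), and nothing in the proposal supplies it; this is where the paper instead invokes the double centralizer property and the self-duality of $\HInd^{n+1}_n$ coming from the symmetry of the Ariki--Koike algebra.
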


\begin{proof}
By Proposition \ref{Prop commute Om Res Phi Ind} (\roii) and \eqref{Om Phi Id}, 
we have 
\begin{align*}
\Om_{n+1} \circ \coInd^{n+1}_n \circ \Phi_n 
&\cong 
\Om_{n+1} \circ \Phi_{n+1} \circ \HInd^{n+1}_n
\\
&\cong 
\HInd^{n+1}_n.
\end{align*} 
These isomorphisms together with \eqref{Phi Om I iso Id} imply that  
\begin{align}
\label{HInd Om I iso Om coInd I}
\begin{split}
\HInd^{n+1}_n \circ \Om_n \circ I_n 
&\cong 
\Om_{n+1} \circ \coInd^{n+1}_n \circ \Phi_n  \circ \Om_n \circ I_n 
\\
&\cong 
\Om_{n+1} \circ \coInd^{n+1}_n \circ I_n.
\end{split}
\end{align}
Thus, there exists a functorial isomorphism 
\[
\wt{\nu} : \HInd^{n+1}_n \circ \Om_n \circ I_n  \ra \Om_{n+1} \circ \coInd^{n+1}_n \circ I_n.
\] 
Since $\HInd^{n+1}_n \circ \Om_n$ is an exact functor, 
there exists the unique morphism 
\[
\nu : \HInd^{n+1}_n \circ \Om_n \ra \Om_{n+1} \circ \coInd^{n+1}_n  
\text{ such that } 
\nu 1_{I_n} =\wt{\nu} 
\] 
by Lemma \ref{Lemma lift morphism of functors} (\roi). 

We prove that 
$\nu 1_{I_n^\D} : \HInd^{n+1}_n \circ \Om_n \circ I_n^\D \ra \Om_{n+1} \circ \coInd^{n+1}_n \circ I_n^\D$
gives an isomorphism of functors. 
Note that the global dimension of $\Sc_{n,r}$ is finite since $\Sc_{n,r}$ is a quasi-hereditary algebra. 
Thus, 
for any $M \in \Sc_{n,r} \cmod^\D$, 
we can take a projective resolution 
\begin{align*}
0 \ra P_k \xra{d_k} \dots \xra{d_2} P_1 \xra{d_1} P_0 \xra{d_0} M \ra 0 
\end{align*} 
such that 
$k$ is equal to the projective dimension of $M$ (denoted by $\pdim M$). 
By an induction on $\pdim M$, 
we prove that $\nu 1_{I^\D_n}(M)$ is an isomorphism. 

When $\pdim M =0$, 
we have $\nu(M) = \wt{\nu}(M)$ since $M$ is projective. 
Thus, $\nu(M)$ is an isomorphism. 

Assume that $\pdim M >0$. 
For the short exact sequence 
\begin{align}
\label{short exact seq Ker P0 M}
0 \ra \Ker d_0 \ra P_0 \xra{d_0} M \ra 0,
\end{align} 
we have that 
$\Ker d_0 \in \Sc_{n,r} \cmod^\D$ 
by \cite[Proposition A2.2 (\rov)]{D-book}. 
Moreover, we have $\pdim \Ker d_0 \leq \pdim M-1$. 
By applying the functors 
$F:= \HInd^{n+1}_n \circ \Om_n$ and $G:= \Om_{n+1} \circ \coInd^{n+1}_n$ 
to \eqref{short exact seq Ker P0 M}, 
we have the following commutative diagram 
\begin{align*}
\xymatrix{
0 \ar[r]  
& F (\Ker d_0) \ar[r] \ar[d]_{\nu (\Ker d_0)} 
& F (P_0) \ar[r] \ar[d]_{\nu(P_0)} 
& F (M) \ar[r] \ar[d]_{\nu(M)} 
& 0
\\
0 \ar[r] 
& G (\Ker d_0) \ar[r]
& G  (P_0) \ar[r] 
& G  (M)
}
\end{align*}
such that 
each row is exact. 
Note that 
$\nu (\Ker d_0)$ (resp. $\nu(P_0)$) 
is an isomorphism by the assumption of induction (resp. the fact $P_0$ is projective). 
Then, 
the above diagram implies that $\nu(M)$ is injective. 
Thus, $\nu(M)$ is an isomorphism by Lemma \ref{Lemma comparsion dimensions} (\roii). 
Now we proved that 
$\nu 1_{I^\D_n}$ gives the isomorphism 
\begin{align}
\label{Hind Om ID iso Om coInd ID}
\HInd^{n+1}_n \circ \Om_n \circ I_n^\D 
\cong 
 \Om_{n+1} \circ \coInd^{n+1}_n \circ I_n^\D.
\end{align}

Next, we prove that 
$\nu 1_{I_n^\N} : \HInd^{n+1}_n \circ \Om_n \circ I_n^\N \ra \Om_{n+1} \circ \coInd^{n+1}_n \circ I_n^\N$
gives an isomorphism of functors. 
By \cite[Proposition 4.4]{D-book}, 
for $N \in \Sc_{n,r} \cmod^\N$, 
we can take the following exact sequence 
\begin{align*}
0 \ra T_k \xra{d'_k} \dots \xra{d'_2}  T_1 \xra{d'_1} T_0 \xra{d'_0} N \ra 0, 
\end{align*}
such that $T_i$ is a (characteristic) tilting module, and that $\Ker d'_i \in \Sc_{n,r} \cmod^\N$ for each $i=0,\dots,k$.  
By applying the functors 
$F:= \HInd^{n+1}_n \circ \Om_n$ and $G:= \Om_{n+1} \circ \coInd^{n+1}_n$ 
to this exact sequence, 
we have the following commutative diagram 
\begin{align*}
\xymatrix{
& F (T_1) \ar[r] \ar[d]_{\nu (T_1)} 
& F (T_0) \ar[r] \ar[d]_{\nu(T_0)} 
& F (N) \ar[r] \ar[d]_{\nu(N)} 
& 0
\\
& G (T_1) \ar[r]
& G  (T_0) \ar[r] 
& G  (N) \ar[r]
& 0
}
\end{align*}
such that each row is exact. 
(The exactness of the second row comes from the fact 
$\xi \Sc_{n+1,r} \in \Sc_{n,r} \cmod^\D$ and  $\Ker d'_i \in \Sc_{n,r} \cmod^\N$ 
thanks to \cite[Proposition A2.2 (\roii)]{D-book}.) 
We already proved that $\nu(T_1)$ and $\nu(T_0)$ are isomorphisms 
since $T_0, T_1 \in \Sc_{n,r} \cmod^\D$.  
Thus, the above diagram implies that 
$\nu(N)$ is an isomorphism. 
Now, we proved that 
$\nu 1_{I^\N_n}$ gives the isomorphism 
\begin{align}
\label{HInd Om IN iso Om coInd IN}
\HInd^{n+1}_n \circ \Om_n \circ I_n^\N
\cong 
 \Om_{n+1} \circ \coInd^{n+1}_n \circ I_n^\N.
\end{align}
Note that 
$\cas \circ I_n^\N \circ \cas \circ I_n^\D \cong I_n^\D$, 
we have 
\begin{align*}
&\HInd^{n+1}_n \circ \Om_n \circ I_n^\D 
\\
&\cong
\HInd^{n+1}_n \circ \Om_n \circ  \cas \circ I_n^\N \circ \cas \circ I_n^\D 
\\
&\cong
\cas \circ \HInd^{n+1}_n \circ \Om_n \circ I_n^\N \circ \cas \circ I_n^\D 
\quad (\because \text{Lemma } \ref{Lemma commute cas Om HRes HInd})
\\
&\cong 
\cas \circ \Om_{n+1} \circ \coInd^{n+1}_n \circ I_n^\N \circ \cas \circ I_n^\D 
\quad (\because \eqref{HInd Om IN iso Om coInd IN})
\\
&\cong
\Om_{n+1} \circ \Ind^{n+1}_n \circ \cas \circ I_n^\N \circ \cas \circ I_n^\D 
\quad 
(\because \text{Lemma } \ref{Lemma commute cas Om HRes HInd} 
	\text{ and Lemma } \ref{Lemma commute Res cas, Ind cas})
\\
&\cong 
\Om_{n+1} \circ \Ind^{n+1}_n \circ I_n^\D. 
\end{align*}
Combining these isomorphisms with Proposition \ref{Prop commute Om Res Phi Ind} (\roii), 
we have 
\begin{align}
\label{coInd Phi Om ID iso Phi Om Ind ID}
\coInd^{n+1}_n \circ \Phi_n \circ \Om_n \circ I_n^\D 
&\cong 
\Phi_{n+1} \circ \HInd^{n+1}_n \circ \Om_n \circ I_n^\D
\\
&\cong 
\Phi_{n+1} \circ \Om_{n+1} \circ \Ind^{n+1}_n \circ I_n^\D. 
\notag 
\end{align}
Note that 
$I_n^\D \circ I_n \cong I_n$, and that the functor $\Ind^{n+1}_n$ preserves projectivity  
since $\Ind^{n+1}_n$ has an exact right adjoint functor $\Res^{n+1}_n$. 
Then, 
\eqref{coInd Phi Om ID iso Phi Om Ind ID} together with \eqref{Phi Om I iso Id} implies 
\begin{align*}
\Ind^{n+1}_n \circ I_n \cong \coInd^{n+1}_n \circ I_n. 
\end{align*}
From this isomorphism, 
we can prove that 
\begin{align*}
\Ind^{n+1}_n \circ I_n^\D \cong \coInd^{n+1}_n \circ I_n^\D 
\end{align*}
by using Lemma \ref{Lemma exact Ind D} and Lemma \ref{Lemma comparsion dimensions} (\roi) 
in a similar way as in the proof of \eqref{Hind Om ID iso Om coInd ID}.
\end{proof}

\para 
By Theorem \ref{Thm Res Ind Weyl} (\roiii) and Lemma \ref{Lemma exact Ind D}, 
we see that 
$\Ind^{n+1}_n (M) \in \Sc_{n+1,r} \cmod^\D$ 
for $M \in \Sc_{n,r} \cmod^\D$. 
Thus, 
$\Ind^{n+1}_n \circ I_n^\D \cong \coInd^{n+1}_n \circ I_n^\D$ 
gives a functor from $\Sc_{n,r} \cmod^\D$ to $\Sc_{n+1,r} \cmod^\D$. 
On the other hand, 
by Theorem \ref{Thm Res Ind Weyl} (\roi), 
$\Res^{n+1}_n \circ I_{n+1}^\D$  gives a functor from $\Sc_{n+1,r} \cmod^\D$ to $\Sc_{n,r} \cmod^\D$.  
Moreover, 
we see that 
$\Ind^{n+1}_n \circ I_n^\D$ 
is left and right adjoint to 
$\Res^{n+1}_n \circ I_{n+1}^\D$. 
This adjunction induces the following theorem.

\begin{thm}
\label{Thm iso Ind coInd}
We have the following isomorphism of functors 
\[ 
\Ind^{n+1}_n \cong \coInd^{n+1}_n. 
\]
\end{thm}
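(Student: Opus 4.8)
The plan is to upgrade the isomorphism $\Ind^{n+1}_n \circ I_n^\D \cong \coInd^{n+1}_n \circ I_n^\D$ of the preceding proposition to an isomorphism of functors on all of $\Sc_{n,r} \cmod$. The structural facts I would use are that $\Ind^{n+1}_n$ is right exact, that $\coInd^{n+1}_n$ is left exact, and that $\Sc_{n,r}$ has finite global dimension (being quasi-hereditary over the field $R$), so that every module has a finite projective resolution whose terms lie in $\Sc_{n,r} \cmod^\D$, and dually a finite injective coresolution whose terms lie in $\Sc_{n,r} \cmod^\N$.

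First I would construct a natural transformation $\nu \colon \Ind^{n+1}_n \to \coInd^{n+1}_n$. Since $\Ind^{n+1}_n$ is right exact, Lemma \ref{Lemma lift morphism of functors} (\roiii) (with $F = \Ind^{n+1}_n$ and $G = \coInd^{n+1}_n$) shows that $\nu \mapsto \nu 1_{I_n^\D}$ gives an isomorphism $\Hom(\Ind^{n+1}_n, \coInd^{n+1}_n) \cong \Hom(\Ind^{n+1}_n \circ I_n^\D, \coInd^{n+1}_n \circ I_n^\D)$; hence the functorial isomorphism of the preceding proposition has the form $\nu 1_{I_n^\D}$ for a unique $\nu$, and $\nu(M)$ is an isomorphism whenever $M \in \Sc_{n,r}\cmod^\D$, in particular whenever $M$ is projective. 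Applying the duality $\cas$ and the intertwining $\cas \circ \Ind^{n+1}_n \cong \coInd^{n+1}_n \circ \cas$ of Lemma \ref{Lemma commute Res cas, Ind cas} (\roii), together with the fact that $\cas$ exchanges $\Sc_{n,r}\cmod^\D$ with $\Sc_{n,r}\cmod^\N$, one likewise gets that $\Ind^{n+1}_n$ and $\coInd^{n+1}_n$ agree on $\Sc_{n,r}\cmod^\N$, hence on injectives.

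It remains to show $\nu(M)$ is an isomorphism for arbitrary $M$, and I would argue by induction on $\pdim M$. The case $\pdim M = 0$ is the projective case; for the inductive step, take a short exact sequence $0 \to K \to P \to M \to 0$ with $P$ projective and $\pdim K < \pdim M$, apply the right exact $\Ind^{n+1}_n$ and the left exact $\coInd^{n+1}_n$, and compare via $\nu$ in the commutative diagram with exact rows $\Ind^{n+1}_n K \to \Ind^{n+1}_n P \to \Ind^{n+1}_n M \to 0$ and $0 \to \coInd^{n+1}_n K \to \coInd^{n+1}_n P \to \coInd^{n+1}_n M$; since $\nu(P)$ is an isomorphism and $\nu(K)$ is one by induction, a diagram chase forces $\nu(M)$ to be injective, and the reverse inequality $\dim \coInd^{n+1}_n M \le \dim \Ind^{n+1}_n M$ then follows by applying the injectivity just obtained to $M^\cas$ and invoking the $\cas$-intertwining, so $\nu(M)$ is bijective. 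Equivalently, one can first establish directly that $\Ind^{n+1}_n$ is exact — that is, that $\Sc_{n+1,r}\xi$ is projective as a right $\Sc_{n,r}$-module — by checking that $\operatorname{Tor}^{\Sc_{n,r}}_{>0}(\Sc_{n+1,r}\xi, -)$ vanishes on $\Sc_{n,r}\cmod^\D$ (Lemma \ref{Lemma exact Ind D} together with stability of $\cmod^\D$ under syzygies), then on $\Sc_{n,r}\cmod^\N$ (via tilting resolutions, tilting modules being $\Ind^{n+1}_n$-acyclic by the previous step), then on all simples by dévissage along the defining filtration, and finally deducing $\Ind^{n+1}_n \cong \coInd^{n+1}_n$ from the elementary fact that two exact functors that agree on projectives are naturally isomorphic.

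The step I expect to be the main obstacle is precisely this passage from $\Sc_{n,r}\cmod^\D$ to all of $\Sc_{n,r}\cmod$: the asymmetry between the right exact $\Ind^{n+1}_n$ and the left exact $\coInd^{n+1}_n$ means a projective resolution controls one of them and an injective coresolution the other, so one must feed in exactness from both the $\Delta$-filtered and the $\nabla$-filtered sides — which is exactly the content of the biadjointness of $\Ind^{n+1}_n \circ I_n^\D$ and $\Res^{n+1}_n \circ I_{n+1}^\D$ noted before the statement, and this is what makes the reduction to $\Sc_{n,r}\cmod^\D$ in the preceding proposition strong enough to conclude.
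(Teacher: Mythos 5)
Your reduction to the $\Delta$-filtered subcategory and your use of Lemma \ref{Lemma lift morphism of functors} (iii) to produce a single natural transformation $\nu : \Ind^{n+1}_n \to \coInd^{n+1}_n$ extending the isomorphism on $\Sc_{n,r}\cmod^\D$ are fine. The gap is in the induction on $\pdim M$ over \emph{arbitrary} $M$. Two things go wrong there. First, the diagram chase that makes $\nu(M)$ injective genuinely needs $\nu(K)$ to be \emph{surjective} as well as injective (you must lift an element of $\coInd^{n+1}_n(K)$ back to $\Ind^{n+1}_n(K)$), so the induction can only proceed if at each stage you upgrade "injective" to "isomorphism". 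Second, your proposed upgrade — deducing $\dim \coInd^{n+1}_n M \le \dim \Ind^{n+1}_n M$ from the injectivity of $\nu(M^\cas)$ — is circular: the induction is on $\pdim$, and $\pdim M^\cas$ (the injective dimension of $M$) bears no relation to $\pdim M$, so injectivity of $\nu(M^\cas)$ is not yet available. This is precisely why the paper's dimension inequality (Lemma \ref{Lemma comparsion dimensions}, proved by a modular-system/decomposition-map argument) is stated only for $M \in \Sc_{n,r}\cmod^\D$; no such inequality is known a priori for general $M$, since $[\Ind^{n+1}_n M]$ is not determined by $[M]$ before exactness of $\Ind^{n+1}_n$ is established. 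Your fallback route has the same defect in sharper form: $\operatorname{Tor}_{>0}(\Sc_{n+1,r}\xi, -)$ vanishing on $\Sc_{n,r}\cmod^\D$ only says that the linear dual of $\Sc_{n+1,r}\xi$ is $\nabla$-filtered, not that $\Sc_{n+1,r}\xi$ is projective, and simple modules carry no $\Delta$- or $\nabla$-filtration, so the proposed "d\'evissage to simples" does not yield the vanishing of $\operatorname{Tor}_1$ on simples.

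The paper closes the gap by a different mechanism, which you mention only as motivation in your last paragraph but never actually deploy: it transfers the \emph{adjunction} rather than the isomorphism. Since $\Ind^{n+1}_n \circ I_n^\D$ is right adjoint to $\Res^{n+1}_n \circ I_{n+1}^\D$, one has a unit and counit on the $\Delta$-filtered subcategories satisfying the triangle identities; Lemma \ref{Lemma lift morphism of functors} (iii) (applied to the right exact functors $\Res^{n+1}_n \circ \Ind^{n+1}_n$, $\Ind^{n+1}_n \circ \Res^{n+1}_n$ and the identity) lifts these to natural transformations on the whole module categories, and the \emph{injectivity} of the restriction map $\Hom(F,G) \to \Hom(F \circ I^\D, G \circ I^\D)$ forces the triangle identities to hold globally because they hold after restriction. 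Hence $\Ind^{n+1}_n$ is right adjoint to $\Res^{n+1}_n$ on all of $\Sc_{n+1,r}\cmod$, and uniqueness of adjoints gives $\Ind^{n+1}_n \cong \coInd^{n+1}_n$ with no resolution argument outside $\cmod^\D$ at all. If you want to keep your approach, you would need to replace the $\pdim$-induction by this adjunction transfer (or by a genuinely non-circular source for the dimension equality on arbitrary modules).
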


\begin{proof}
By the uniqueness of the adjoint functor, 
it is enough to show that 
$\Ind^{n+1}_n$ is right adjoint to $\Res^{n+1}_n$ 
since 
$\coInd^{n+1}_n$ is right adjoint to $\Res^{n+1}_n$. 

Put 
$F= \Ind^{n+1}_n$ and $E= \Res^{n+1}_n$. 
Since 
$F \circ I_n^\D$ is right adjoint to $E \circ I_{n+1}^\D$, 
there exist the morphisms of functors 
$\wt{\ve} : E \circ I_{n+1}^\D \circ F \circ I_n^\D \ra \Id_n \circ I_n^\D$ (unit) 
and 
$\wt{\eta} : \Id_{n+1} \circ I_{n+1}^\D \ra F \circ I_n^\D \circ E \circ I_{n+1}^\D$ 
(counit) such that 
$(\wt{\ve} \, 1_{E \circ I_{n+1}^\D}) \circ (1_{E\circ I_{n+1}^\D} \wt{\eta}) = 1_{E \circ I_{n+1}^\D}$,
and that 
$(1_{F \circ I_n^\D} \wt{\ve}) \circ (\wt{\eta} \, 1_{F \circ I_n^\D}) = 1_{F \circ I_n^\D}$, 
where we regard the functor $\Id_n \circ I_n^\D$ (resp. $\Id_{n+1} \circ I_{n+1}^\D$) 
as the identity functor on $\Sc_{n,r} \cmod^\D$ (resp. $\Sc_{n+1,r} \cmod^\D$). 
Note that $I_{n+1}^\D \circ F \circ I_n^\D \cong F \circ I_n^\D$ etc., 
we can write simply as 
$\wt{\ve} : E \circ F \circ I_n^\D \ra \Id_n \circ I_n^\D$ 
and 
$\wt{\eta} : \Id_{n+1} \circ I_{n+1}^\D \ra F \circ E \circ I_{n+1}^\D$ 
such that 
\begin{align}
&(\wt{\ve}\, 1_E 1_{I_{n+1}^\D}) \circ (1_E \wt{\eta}) = 1_E 1_{I_{n+1}^\D},
\\
&(1_F \wt{\ve}) \circ (\wt{\eta}\, 1_F 1_{I_n^\D}) = 1_F 1_{I_n^\D}. 
\end{align}
By Lemma \ref{Lemma lift morphism of functors} (\roiii), 
there exist the morphisms of functors  
\[ 
\ve : E \circ F \ra \Id_n, 
\quad 
\eta : \Id_{n+1} \ra F \circ E
\]
such that 
$\ve 1_{I_n^\D} = \wt{\ve}$ and $\eta 1_{I_{n+1}^\D} = \wt{\eta}$. 
Moreover, we have 
\begin{align}
\label{adjoint 1}
\begin{split}
((\ve 1_E) \circ (1_E \, \eta)) 1_{I_{n+1}^\D} 
&= 
((\ve 1_E) \circ (1_E \, \eta))) ( 1_{I_{n+1}^\D} \circ 1_{I_{n+1}^\D} ) 
\\
&= 
(\ve 1_E 1_{I_{n+1}^\D}) \circ (1_E  \, \eta 1_{I_{n+1}^\D})
\\
&= (\wt{\ve} 1_E 1_{I_{n+1}^\D}) \circ (1_E \wt{\eta})
\\
&= 1_E 1_{I_{n+1}^\D}. 
\end{split}
\end{align} 
Similarly, we have 
\begin{align}
\label{adjoint 2}
\begin{split}
((1_F \ve) \circ (\eta 1_F)) 1_{I_n^\D} 
&= (1_F \ve 1_{I_n^\D}) \circ (\eta 1_F 1_{I_n^\D}) 
\\
&= (1_F \, \wt{\ve}) \circ (\wt{\eta} 1_F 1_{I_n^\D}) 
\\
&= 1_F 1_{I_n^\D}. 
\end{split}
\end{align}
By \eqref{adjoint 1} and \eqref{adjoint 2} 
together with Lemma \ref{Lemma lift morphism of functors} (\roiii), 
we have 
\[ 
(\ve 1_E) \circ (1_E \, \eta) = 1_E, 
\quad 
(1_F \ve) \circ (\eta 1_F) = 1_F, 
\]
and 
$F$ is right adjoint to $E$. 
\end{proof}

Now we have the following properties of induction and restriction functors. 

\begin{cor}
\label{Cor properties Res Ind}
We have the following. 
\begin{enumerate}
\item 
$\Res^{n+1}_n$ and $\Ind^{n+1}_n$ are exact. 

\item 
$\Ind^{n+1}_n$ is left and right adjoint to $\Res^{n+1}_n$. 

\item 
There exist  isomorphisms of functors 
\[
 \Res^{n+1}_n \circ \cas \cong \cas \circ \Res^{n+1}_n, 
 \quad 
 \Ind^{n+1}_n \circ \cas \cong \cas \circ \Ind^{n+1}_n. 
\]

\item 
There exist  isomorphisms of functors 
\[ 
\Om_n \circ \Res^{n+1}_n \cong \HRes^{n+1}_n \circ \Om_{n+1}, 
\quad 
\Om_{n+1} \circ \Ind^{n+1}_n \cong \HInd^{n+1}_ \circ \Om_n.
\]
\end{enumerate}
\end{cor}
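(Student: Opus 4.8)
The plan is to read off all four assertions from Theorem~\ref{Thm iso Ind coInd} (the isomorphism $\Ind^{n+1}_n\cong\coInd^{n+1}_n$) together with the elementary properties of the three functors recorded just after their definition in \S2 and with Lemma~\ref{Lemma commute Res cas, Ind cas}, Proposition~\ref{Prop commute Om Res Phi Ind} and Lemma~\ref{Lemma lift morphism of functors}; no new computation is needed. For (\roi) and (\roii), recall that $\Res^{n+1}_n$ is exact, $\Ind^{n+1}_n$ is right exact, $\coInd^{n+1}_n$ is left exact, $\Ind^{n+1}_n$ is left adjoint to $\Res^{n+1}_n$, and $\coInd^{n+1}_n$ is right adjoint to $\Res^{n+1}_n$. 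Since $\Ind^{n+1}_n\cong\coInd^{n+1}_n$, this common functor is simultaneously right exact and left exact, hence exact, which together with the exactness of $\Res^{n+1}_n$ gives (\roi); and $\Ind^{n+1}_n$, being left adjoint to $\Res^{n+1}_n$ and isomorphic to $\coInd^{n+1}_n$, is also right adjoint to $\Res^{n+1}_n$, which is (\roii).

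For (\roiii), the isomorphism $\Res^{n+1}_n\circ\cas\cong\cas\circ\Res^{n+1}_n$ is Lemma~\ref{Lemma commute Res cas, Ind cas}(\roi). For the second one, Lemma~\ref{Lemma commute Res cas, Ind cas}(\roii) gives $\cas\circ\Ind^{n+1}_n\cong\coInd^{n+1}_n\circ\cas$, and replacing $\coInd^{n+1}_n$ by $\Ind^{n+1}_n$ via Theorem~\ref{Thm iso Ind coInd} yields $\cas\circ\Ind^{n+1}_n\cong\Ind^{n+1}_n\circ\cas$. Likewise, the first isomorphism in (\roiv), $\Om_n\circ\Res^{n+1}_n\cong\HRes^{n+1}_n\circ\Om_{n+1}$, is exactly Proposition~\ref{Prop commute Om Res Phi Ind}(\roi).

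It remains to produce the last isomorphism $\Om_{n+1}\circ\Ind^{n+1}_n\cong\HInd^{n+1}_n\circ\Om_n$, and here is where the only real work lies. I would start from Proposition~\ref{Prop commute Om Res Phi Ind}(\roii), $\coInd^{n+1}_n\circ\Phi_n\cong\Phi_{n+1}\circ\HInd^{n+1}_n$, compose on the left with $\Om_{n+1}$, and use $\Om_{n+1}\circ\Phi_{n+1}\cong\Id_{n+1}^{\He}$ from \eqref{Om Phi Id} to get $\Om_{n+1}\circ\coInd^{n+1}_n\circ\Phi_n\cong\HInd^{n+1}_n$; by Theorem~\ref{Thm iso Ind coInd} this reads $\Om_{n+1}\circ\Ind^{n+1}_n\circ\Phi_n\cong\HInd^{n+1}_n$. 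Composing on the right with $\Om_n$ and then restricting along the canonical embedding $I_n:\Sc_{n,r}\cproj\ra\Sc_{n,r}\cmod$, equation \eqref{Phi Om I iso Id} ($\Phi_n\circ\Om_n\circ I_n\cong I_n$) gives $\Om_{n+1}\circ\Ind^{n+1}_n\circ I_n\cong\HInd^{n+1}_n\circ\Om_n\circ I_n$. Since $\Ind^{n+1}_n$ is exact by (\roi) and the Schur and Hecke induction functors are exact, both $\Om_{n+1}\circ\Ind^{n+1}_n$ and $\HInd^{n+1}_n\circ\Om_n$ are right exact, so Lemma~\ref{Lemma lift morphism of functors}(\roii) upgrades this isomorphism on $\Sc_{n,r}\cproj$ to an isomorphism of functors on all of $\Sc_{n,r}\cmod$. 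The one step with any content is precisely this last promotion from projectives to arbitrary modules, and it is supplied verbatim by Lemma~\ref{Lemma lift morphism of functors}(\roii), whose right-exactness hypothesis is exactly what part (\roi) furnishes; everything else is formal manipulation of the adjunctions and of the commutations already established.
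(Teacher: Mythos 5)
Your proposal is correct and follows essentially the same route as the paper: (\roi)--(\roiii) and the first half of (\roiv) are read off from Theorem \ref{Thm iso Ind coInd}, the remarks after the definitions, Lemma \ref{Lemma commute Res cas, Ind cas} and Proposition \ref{Prop commute Om Res Phi Ind}, while the second isomorphism in (\roiv) is obtained on projectives and then promoted via Lemma \ref{Lemma lift morphism of functors} (\roii). The only cosmetic difference is that you re-derive the isomorphism $\HInd^{n+1}_n \circ \Om_n \circ I_n \cong \Om_{n+1} \circ \coInd^{n+1}_n \circ I_n$ from Proposition \ref{Prop commute Om Res Phi Ind} (\roii), \eqref{Om Phi Id} and \eqref{Phi Om I iso Id}, whereas the paper simply cites it as \eqref{HInd Om I iso Om coInd I}.
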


\begin{proof}
(\roi) and (\roii) are obtained from the definitions and Theorem \ref{Thm iso Ind coInd}. 
(\roiii) is obtained from Lemma \ref{Lemma commute Res cas, Ind cas} 
and Theorem \ref{Thm iso Ind coInd}. 
The first isomorphism in (\roiv) is Proposition \ref{Prop commute Om Res Phi Ind} (\roi). 
By \eqref{HInd Om I iso Om coInd I} and Theorem \ref{Thm iso Ind coInd}, 
we have 
\[ 
\Om_{n+1} \circ \Ind^{n+1}_n \circ I_n \cong \HInd^{n+1}_n \circ \Om_n \circ I_n. 
\]
Thus, 
by Lemma \ref{Lemma lift morphism of functors} (\roii), 
we have 
$\Om_{n+1} \circ \Ind^{n+1}_n \cong \HInd^{n+1}_ \circ \Om_n$. 
\end{proof}

%%%%%%%%%%%%%%%%%%%%%%%%%%%%%%%%%%%%%%%%%%%%%%%%%%%%%%%%%%%%%%%%%%%%%%%%%%%%%%%%%%%

%%%%%%%%%%%%%%%%%%%%%%%%%%%%%%%%%%%%%%%%%%%%%%%%%%%%%%%%%%%%%%%%%%%%%%%%%%%%%%%%%%%

\section{Refinements of induction and restriction functors}

In this section, 
we refine the induction and restriction functors which are defined in the previous sections. 
As an application, we categorify a Fock space by using categories  $\Sc_{n,r} \cmod$ ($n \geq 0$). 

Throughout this section, 
we assume that $R$ is a field, 
and 
we also assume the following conditions for parameters. 
\begin{itemize}
\item 
There exists the minimum positive integer $e$ such that 
$1 + (q^2) + (q^2)^2 + \dots + (q^2)^{e-1}=0$. 
\item 
There exists an integer $s_i \in \ZZ$ such that 
$Q_i=(q^2)^{s_i}$ for each $i=1,\dots, r$. 
\end{itemize}
Thanks to 
\cite[Theorem 1.5]{DM}, 
these assumptions make no loss of generality in representation theory of cyclotomic $q$-Schur algebras.  

We also remark that 
$\Sc_{n,r} \cmod$ does not depend on 
a choice of $\Bm=(m_1,\cdots,m_r) \in \ZZ_{>0}^r$ such that 
$m_k \geq n$  for any $k=1,\dots, r$ up to Morita equivalence 
(see Remark \ref{Remark Morita equivalent}). 
Then, 
for each $n$, 
we take suitable $\Bm$ and $\Bm'$ 
to consider the induction and restriction functors between $\Sc_{n,r} \cmod$ and $\Sc_{n+1,r} \cmod$ 
as in the previous sections.

%
%%%%%%%%%%
%

\para 
For $x=(a,b,c) \in \ZZ_{>0} \times \ZZ_{>0} \times \{1,\dots,r\}$, 
we define the residue  of $x$ by 
\[ 
\res(x)= (q^2)^{b-a} Q_c = (q^{2})^{b-a + s_c}. 
\]
For $x \in \ZZ_{>0} \times \ZZ_{>0} \times \{1,\dots,r\}$,  
we say  that 
$x$ is $i$-node if $\res (x)=(q^2)^{i}$, 
where we can regard $i$ as an element of $\ZZ/ e \ZZ$ 
since $(q^2)^{i + k e} = (q^2)^i$ for any $k \in \ZZ$ from the assumption for parameters. 
We also say that 
$x$ is removable (resp. addable) $i$-node of $\la \in \vL_{n,r}^+$, 
if $x$ is $i$-node and removable (resp. addable) node of $\la$.  

For $ \la \in \vL_{n,r}^+$, 
put $r(\la) =(r_0(\la), r_1(\la), \dots, r_{e-1}(\la)) \in \ZZ^e_{\geq 0}$, 
where 
$r_i(\la)$ is the number of $i$-node in $[\la]$. 
Then it is known that 
the classification of blocks of $\Sc_{n,r}$ in \cite{LM} as follows. 

\begin{thm}[{\cite{LM}}] 
\label{Theorem LM} 
For $ \la,\mu \in \vL_{n,r}^+$, 
$\D_n(\la)$ and $\D_n(\mu)$ belong to the same block of $\Sc_{n,r}$ 
if and only if 
$r(\la) = r(\mu)$. 
\end{thm}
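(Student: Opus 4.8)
The statement is an equivalence, and I would treat its two halves quite differently: the implication ``$\D_n(\la),\D_n(\mu)$ in the same block $\Rightarrow r(\la)=r(\mu)$'' is a separation-of-blocks statement that follows from central characters, while the converse is a linkage statement and is where the real work lies.

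For the separation direction, the plan is to produce, for each $i\in\ZZ/e\ZZ$, a central element $z_{n,i}\in Z(\Sc_{n,r})$ that acts on $\D_n(\la)$ by the scalar $r_i(\la)$ for every $\la\in\vL_{n,r}^+$. Such elements can be manufactured from the Jucys--Murphy elements $\s_{(i,k)}$ of Section~1: the subalgebra they generate is commutative, and on each weight space of $\D_n(\la)$ their joint eigenvalues are governed by the residues of the nodes of $[\la]$ (this is the content of Mathas's theory of cellular algebras with a separating family of JM elements, valid over $\CK$ and then specialised). Equivalently and more cheaply, one transports the classical central elements of $\He_{n,r}$ -- symmetric polynomials in $L_1,\dots,L_n$, which act on the Specht module $S^\la$ by the symmetric functions of the residue multiset $\{\res(x):x\in[\la]\}$ -- through $\He_{n,r}\cong 1_{\w_n}\Sc_{n,r}1_{\w_n}$ and the identity $\Om_n(\D_n(\la))=S^\la$. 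Either way, a central element acts by a single scalar on each indecomposable block, so if $\D_n(\la)$ and $\D_n(\mu)$ lie in the same block then $\la$ and $\mu$ have the same multiset of residues, i.e. $r(\la)=r(\mu)$.

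For the linkage direction I would argue in two stages. First, a purely combinatorial lemma: the relation ``$r(\la)=r(\mu)$'' on $\vL_{n,r}^+$ is generated by elementary moves $\nu\leadsto\nu'$ in which $\nu$ and $\nu'$ are obtained from a common $\kappa\in\vL_{n-1,r}^+$ by adding two (possibly distinct) addable nodes of the same residue $i$; this is elementary and is proved by induction on $n$, sliding nodes of equal residue toward a normal form. Second, the algebraic step: for such $\kappa$ and addable $i$-nodes $x,y$ with $\nu=\kappa\cup x$, $\nu'=\kappa\cup y$, use the induction functor $\Ind^{n}_{n-1}$ (constructed exactly as in Sections~2--4 with $n$ replaced by $n-1$) together with its decomposition into $i$-components $\iInd^{n}_{n-1}$, which is available directly from the central elements $z_{n,i}$ above. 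By Theorem~\ref{Thm Res Ind Weyl}(iii), restricted to the relevant residue-content summand, $\iInd^{n}_{n-1}(\D_{n-1}(\kappa))$ has a $\D$-filtration whose layers are precisely the $\D_n(\kappa\cup z)$ with $z$ an addable $i$-node of $\kappa$; in particular both $\D_n(\nu)$ and $\D_n(\nu')$ occur. Since $\D_{n-1}(\kappa)$ has simple head (hence is indecomposable) and $\iInd^{n}_{n-1}$ is exact with exact biadjoint $\iRes^{n}_{n-1}$, the module $\iInd^{n}_{n-1}(\D_{n-1}(\kappa))$ is supported on a single block; therefore $\D_n(\nu)$ and $\D_n(\nu')$ lie in the same block, and composing elementary moves yields the converse implication.

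The main obstacle is precisely the last assertion of the previous paragraph: that the $\D$-filtered module $\iInd^{n}_{n-1}(\D_{n-1}(\kappa))$ does not split across distinct blocks. Making this rigorous needs an $\Ext^1$-type non-splitting input showing that consecutive layers $\D_n(\kappa\cup z)$, $\D_n(\kappa\cup z')$ of the filtration are genuinely glued; one deduces this either from the indecomposability of the source $\D_{n-1}(\kappa)$ together with the biadjointness of $(\iInd^{n}_{n-1},\iRes^{n}_{n-1})$ forcing the associated Fock-space combinatorics to be connected, or -- as a fallback -- from the known block classification of the Ariki--Koike algebra $\He_{n,r}$ transported through the Schur functor: by the double-centralizer setup of Section~4 (see \eqref{Om Phi Id} and \eqref{Phi Om I iso Id}) the functor $\Om_n$ induces a bijection between the blocks of $\Sc_{n,r}$ and of $\He_{n,r}$, and by Corollary~\ref{Cor properties Res Ind}(iv) it intertwines the $i$-induction and $i$-restriction functors on the two sides, so the $\Sc_{n,r}$-statement reduces to the $\He_{n,r}$-statement. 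I would pursue the self-contained $\Ext^1$ argument first and resort to the Hecke-algebra reduction only if a clean non-splitting proof is not at hand.
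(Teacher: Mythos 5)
First, note that the paper itself offers no proof of this statement: Theorem \ref{Theorem LM} is quoted verbatim from \cite{LM}, so your proposal must stand as a proof from scratch. Your first direction (same block $\Rightarrow$ same residue content) is the standard central-character argument and is fine in outline. The converse, however, breaks at its very first step: the combinatorial lemma you assert --- that the relation $r(\la)=r(\mu)$ on $\vL_{n,r}^+$ is generated by moves replacing $\kappa\cup x$ by $\kappa\cup y$ with $x,y$ addable nodes of a common $\kappa\in\vL_{n-1,r}^+$ of equal residue --- is false. Take $r=1$, $e=2$, $n=3$ (and $s_1=0$): the partitions $(3)$ and $(1,1,1)$ both have residue content $(r_0,r_1)=(2,1)$ and do lie in the same block, but $(3)$ has the single removable node $(1,3,1)$, and the unique addable $0$-node of $(2)=(3)\setminus(1,3,1)$ is $(1,3,1)$ itself; likewise the unique addable $0$-node of $(1,1)=(1,1,1)\setminus(3,1,1)$ is $(3,1,1)$. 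No elementary move leaves either partition, so they cannot be linked by any chain of such moves. Single-node moves generate a strictly finer equivalence than equality of residue contents, which is exactly why the linkage direction is hard.

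Second, even where an elementary move exists, the inference that $\D_n(\kappa\cup x)$ and $\D_n(\kappa\cup y)$ lie in the same block because both occur as layers of a $\D$-filtration of $\iInd^{n}_{n-1}(\D_{n-1}(\kappa))$ is unjustified: exact biadjoint functors do not preserve indecomposability, and a $\D$-filtered module whose layers sit in distinct blocks simply splits into its block components. Your two fallbacks do not close this gap. The ``$\Ext^1$/biadjointness'' suggestion is not an argument, and the reduction through $\Om_n$ (via the double centralizer property, which does give a bijection of blocks) to the block classification of $\He_{n,r}$ is a reduction to the Ariki--Koike case of the very theorem of \cite{LM} being quoted. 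The actual proof in \cite{LM} establishes linkage via the Jantzen sum formula together with a delicate combinatorial induction; nothing in Sections 2--5 of this paper substitutes for that input, so the converse direction of your proposal is genuinely incomplete.
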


\para 
Put 
\[ R_{n,e} = \{ a = (a_0,a_1,\dots,a_{e-1}) \in \ZZ^e \,|\, a= r(\la) \text{ for some } \la \in \vL_{n,r}^+\}.\]
Then we have a bijection 
between 
$R_{n,e}$ and the set of blocks of $\Sc_{n,r}$ by Theorem \ref{Theorem LM}. 
By using this bijection, 
for $a=(a_0, a_1,\dots, a_{e-1}) \in \ZZ^e$ such that $\sum_{j=0}^{e-1} a_j =n$, 
we define the functor $1_a : \Sc_{n,r} \cmod \ra \Sc_{n,r} \cmod$ as 
the projection to the corresponding block if $a \in R_{n,e}$, 
and $0$ if $a \not\in R_{n,e}$. 

We define a refinement of $\Res^{n+1}_n$ and $\Ind^{n+1}_n$ as follows.  
For $i \in \ZZ/ e \ZZ$, 
put 
\begin{align*}
& \iRes^{n+1}_n = \bigoplus_{a \in R_{n+1,e}} 1 _{a-i} \circ \Res^{n+1}_n \circ 1_a,
\\
& \iInd^{n+1}_n = \bigoplus_{a \in R_{n,e}} 1_{a +i} \circ \Ind^{n+1}_n \circ 1_a,
\end{align*}
where 
$a \pm i= (a_0, \dots, a_{i-1}, a_i \pm 1, a_{i+1}, \dots, a_{e-1})$ 
for $a=(a_0,\dots,a_{e-1}) \in \ZZ^e$. 
Then, we have 
$\Res^{n+1}_n = \bigoplus_{i \in \ZZ/ e\ZZ} \iRes^{n+1}_n$ 
and 
$\Ind^{n+1}_n = \bigoplus_{i \in \ZZ / e \ZZ} \iInd^{n+1}_n$.
From the definition together with Theorem \ref{Thm Res Ind Weyl} and Theorem \ref{Thm iso Ind coInd}, 
we have the following corollary.

\begin{cor}\
\label{Cor iRes iInd on Weyl}
\begin{enumerate}
\item 
For $\la \in \vL_{n+1,r}^+$, 
there exists a filtration of $\Sc_{n,r}$-modules 
\[ 
\iRes^{n+1}_n (\D_{n+1}(\la)) = M_1 \supset M_2 \supset \dots \supset M_k \supset M_{k+1}=0, 
\]
such that 
$M_i / M_{i+1} \cong \D_n (\la \setminus x_i)$, 
where 
$x_1,x_2, \dots, x_k$ are all removable \break $i$-nodes of $\la$ such that 
$x_1 \succ x_2 \succ \dots \succ x_k$.

\item 
For $\mu \in \vL_{n,r}^+$, there exists a filtration of $\Sc_{n+1,r}$-modules 
\[ 
\iInd^{n+1}_n (\D_n(\mu)) = M_1 \supset M_2 \supset \dots \supset M_k \supset M_{k+1}=0 
\]
such that $M_i / M_{i+1} \cong \D_{n+1}(\mu \cup x_i)$, 
where 
$x_1, x_2, \dots, x_k$ are all addable \break $i$-nodes of $\mu$ 
such that 
$x_k \succ x_{k-1} \succ \dots \succ x_1$. 
\end{enumerate}
\end{cor}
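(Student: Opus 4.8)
The plan is to deduce Corollary \ref{Cor iRes iInd on Weyl} from Theorem \ref{Thm Res Ind Weyl} by decomposing the filtrations already obtained there according to the block projections $1_a$. First I would recall the key compatibility between residues and the combinatorics of adding/removing nodes: if $\la \in \vL_{n+1,r}^+$ and $x$ is a removable node of $\la$, then $r(\la \setminus x) = r(\la) - i$ precisely when $x$ is a removable $i$-node, where the notation $a-i$ is as in the text; dually, $r(\mu \cup x) = r(\mu) + i$ exactly when $x$ is an addable $i$-node. This is immediate from $\res(x) = (q^2)^i$ and the definition of $r(\cdot)$ as the tuple counting residues of nodes in the diagram.

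For part (i), start from the filtration in Theorem \ref{Thm Res Ind Weyl}(i),
\[
\Res^{n+1}_n(\D_{n+1}(\la)) = M_1' \supset M_2' \supset \dots \supset M_{k'+1}' = 0,
\]
with $M_j'/M_{j+1}' \cong \D_n(\la \setminus y_j)$ where $y_1 \succ \dots \succ y_{k'}$ are all removable nodes of $\la$. Since $\D_{n+1}(\la)$ lies in the single block $1_{r(\la)}$ of $\Sc_{n+1,r}$, applying $1_{r(\la)-i} \circ {-} \circ 1_{r(\la)}$ gives $\iRes^{n+1}_n(\D_{n+1}(\la)) = 1_{r(\la)-i}\bigl(\Res^{n+1}_n(\D_{n+1}(\la))\bigr)$. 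As $1_{r(\la)-i}$ is an exact functor (projection onto a block), it carries the above filtration to a filtration of $\iRes^{n+1}_n(\D_{n+1}(\la))$ with subquotients $1_{r(\la)-i}(\D_n(\la\setminus y_j))$. Now $\D_n(\la\setminus y_j)$ lies in the block $1_{r(\la\setminus y_j)}$, which equals $1_{r(\la)-i}$ iff $y_j$ is a removable $i$-node; so $1_{r(\la)-i}(\D_n(\la\setminus y_j))$ equals $\D_n(\la\setminus y_j)$ if $y_j$ is an $i$-node and is $0$ otherwise. Discarding the zero subquotients (i.e. intersecting the filtration with the terms where the index $j$ corresponds to an $i$-node) leaves exactly the filtration asserted, indexed by the removable $i$-nodes $x_1 \succ \dots \succ x_k$ in the induced order. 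Part (ii) is identical, starting instead from Theorem \ref{Thm Res Ind Weyl}(iii) and using $\Ind^{n+1}_n \cong \coInd^{n+1}_n$ from Theorem \ref{Thm iso Ind coInd} together with the exactness of the block projections; here $\iInd^{n+1}_n(\D_n(\mu)) = 1_{r(\mu)+i}\bigl(\Ind^{n+1}_n(\D_n(\mu))\bigr)$, and the subquotient $\D_{n+1}(\mu\cup x_j)$ survives iff $x_j$ is an addable $i$-node.

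I do not expect a serious obstacle: the only point requiring care is that ``deleting zero subquotients from a filtration still yields a filtration with the surviving subquotients'' — this is the standard observation that if $1_{r(\la)-i}$ kills $M_j'/M_{j+1}'$ then $1_{r(\la)-i}(M_j') = 1_{r(\la)-i}(M_{j+1}')$, so consecutive terms coincide and can be merged. Everything else is bookkeeping: checking that the order $\succeq$ on the removable (resp. addable) $i$-nodes is induced from the order on all removable (resp. addable) nodes, which is automatic since passing to a subset preserves a total order, and matching the direction conventions ($x_1 \succ \dots \succ x_k$ for restriction, $x_k \succ \dots \succ x_1$ for induction) with those in Theorem \ref{Thm Res Ind Weyl}.
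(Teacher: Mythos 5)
Your proposal is correct and is essentially the argument the paper intends: the corollary is stated as following "from the definition together with Theorem \ref{Thm Res Ind Weyl} and Theorem \ref{Thm iso Ind coInd}", i.e.\ precisely by applying the exact block projections $1_{r(\la)-i}$ (resp.\ $1_{r(\mu)+i}$) to the filtrations of Theorem \ref{Thm Res Ind Weyl}, using the block classification of Theorem \ref{Theorem LM} to see that a subquotient $\D_n(\la\setminus y_j)$ (resp.\ $\D_{n+1}(\mu\cup y_j)$) survives exactly when $y_j$ is an $i$-node, and merging the collapsed terms. Your handling of the residue bookkeeping and of the deletion of zero subquotients is exactly the right level of care.
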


\para 
Put $\Bs =(s_1,s_2,\dots,s_r)$. 
The Fock space with multi-charge $\Bs$ is the $\CC$-vector space 
\[ 
\CF[\Bs] = \bigoplus_{n \in \ZZ_{\geq0}} \bigoplus_{\la \in \vL_{n,r}^+} \CC | \la, \Bs \ran 
\] 
with distinguished basis 
$\{|\la,\Bs \ran \,|\, \la \in \vL_{n,r}^+,\, n \in \ZZ_{\geq 0}\}$ 
which admits an integrable $\wh{\Fsl}_e$-module structure 
with the Chevalley generators acting as follows (cf. \cite{JMMO}): 
for $i \in \ZZ / e \ZZ$, 
\[ 
e_i \cdot |\la, \Bs \ran = \sum_{\mu= \la \setminus x \atop \res (x) =(q^2)^i} | \mu, \Bs \ran, 
\quad 
f_i \cdot |\la, \Bs \ran = \sum_{\mu= \la \cup x \atop \res (x) =(q^2)^i} | \mu, \Bs \ran.  
\]

\para 
Put 
\[
 \iRes = \bigoplus_{n \in \ZZ_{\geq 0}} \iRes^{n+1}_n, 
\quad 
\iInd = \bigoplus_{n \in \ZZ_{\geq 0}} \iInd^{n+1}_n.
\]
Since $\iRes$ and $\iInd$ are exact functors from 
$\bigoplus_{n \geq 0} \Sc_{n,r} \cmod$ to itself, 
these functors imply the well-defined action 
on $\CC \otimes_{\ZZ} K_0 ( \bigoplus_{n \geq 0}\Sc_{n,r} \cmod)$. 
Thanks to Corollary \ref{Cor iRes iInd on Weyl}, 
for $ \la \in \vL_{n,r}^+$, 
we have that 
\[ 
\iRes \cdot [\D_n(\la)]  = \sum_{\mu= \la \setminus x \atop \res (x) =(q^2)^i} [\D_{n-1}(\mu) ], 
\quad 
\iInd \cdot [\D_n(\la)] = \sum_{\mu= \la \cup x \atop \res (x) =(q^2)^i}  [\D_{n+1}(\mu) ].  
\]
Note that 
$\{[\D_n(\la)] \,|\, \la \in \vL_{n,r}^+, n \in \ZZ_{\geq 0}\}$ 
gives an $\CC$-basis of $\CC \otimes_{\ZZ} K_0 ( \bigoplus_{n \geq 0}\Sc_{n,r} \cmod)$. 
Then, we have the following corollary. 

\begin{cor}
\label{Cor categorification of Fock}
The exact functors $\iRes$ and $\iInd$ ($i \in \ZZ / e\ZZ$) 
give the action of  $\wh{\Fsl}_e$ on $\CC \otimes_{\ZZ} K_0 ( \bigoplus_{n \geq 0}\Sc_{n,r} \cmod)$, 
where 
$\iRes$ (resp. $\iInd$) is corresponding to the action of the Chevalley generators $e_i$ (resp. $f_i$) of $\wh{\Fsl}_e$. 
Moreover, 
by the correspondence 
$[\D_n(\la)] \mapsto |\la,\Bs\ran$ ($\la \in \vL_{n,r}^+, n \in \ZZ_{\geq 0}$) of basis, 
$\CC \otimes_{\ZZ} K_0 ( \bigoplus_{n \geq 0}\Sc_{n,r} \cmod)$ 
is isomorphic to the Fock space $\CF[\Bs]$ 
as $\wh{\Fsl}_e$-modules. 
\end{cor}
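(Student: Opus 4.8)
The plan is to deduce the statement from the way $\iRes$ and $\iInd$ act on the classes of Weyl modules in the Grothendieck group, and then to transport the $\wh{\Fsl}_e$-module structure of $\CF[\Bs]$ along the tautological bijection of bases. First I would note that the refined functors $\iRes^{n+1}_n$ and $\iInd^{n+1}_n$ are exact (since $\Res^{n+1}_n$ and $\Ind^{n+1}_n$ are exact by Corollary \ref{Cor properties Res Ind} (\roi), and each block projection $1_a$ is exact), so $\iRes = \bigoplus_n \iRes^{n+1}_n$ and $\iInd = \bigoplus_n \iInd^{n+1}_n$ induce well-defined operators on $\CC \otimes_\ZZ K_0(\bigoplus_{n\geq 0}\Sc_{n,r}\cmod)$. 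Since the class of a module in $K_0$ equals the sum of the classes of the successive quotients of any finite filtration, Corollary \ref{Cor iRes iInd on Weyl} gives exactly the identities
\[
\iRes \cdot [\D_n(\la)] = \sum_{\mu = \la \setminus x \atop \res(x) = (q^2)^i} [\D_{n-1}(\mu)],
\qquad
\iInd \cdot [\D_n(\la)] = \sum_{\mu = \la \cup x \atop \res(x) = (q^2)^i} [\D_{n+1}(\mu)]
\]
already displayed above, the first coming from part (\roi) of that corollary and the second from part (\roii).

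Next I would use that $\Sc_{n,r}$ is quasi-hereditary, so $\{[\D_n(\la)] \mid \la \in \vL_{n,r}^+\}$ is a $\ZZ$-basis of $K_0(\Sc_{n,r}\cmod)$; hence $[\D_n(\la)] \mapsto |\la,\Bs\ran$ extends uniquely to a $\CC$-linear isomorphism
\[
\phi : \CC \otimes_\ZZ K_0\Big(\bigoplus_{n \geq 0}\Sc_{n,r}\cmod\Big) \xra{\sim} \CF[\Bs].
\]
Comparing the displayed identities with the definition of the Chevalley generators $e_i, f_i$ on $\CF[\Bs]$ recalled from \cite{JMMO}, one reads off directly that $\phi\circ\iRes = e_i\circ\phi$ and $\phi\circ\iInd = f_i\circ\phi$ for every $i \in \ZZ/e\ZZ$.

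Finally, since $\CF[\Bs]$ is an integrable $\wh{\Fsl}_e$-module and the elements $e_i, f_i$ ($i \in \ZZ/e\ZZ$) generate $\wh{\Fsl}_e$, every defining relation of $\wh{\Fsl}_e$ among the Chevalley generators holds for the operators $e_i, f_i$ on $\CF[\Bs]$; conjugating these relations by the vector-space isomorphism $\phi$ shows that the same relations hold for the operators $\iRes$ and $\iInd$. Consequently the functors $\iRes$ and $\iInd$ ($i \in \ZZ/e\ZZ$) endow $\CC \otimes_\ZZ K_0(\bigoplus_{n\geq 0}\Sc_{n,r}\cmod)$ with an $\wh{\Fsl}_e$-module structure in which $\iRes$ acts as $e_i$ and $\iInd$ as $f_i$, and by construction $\phi$ is an isomorphism of $\wh{\Fsl}_e$-modules onto $\CF[\Bs]$. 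The whole of the mathematical content sits in the first step, namely the passage from Corollary \ref{Cor iRes iInd on Weyl} (which itself rests on Theorem \ref{Thm Res Ind Weyl} and Theorem \ref{Thm iso Ind coInd}) to the Grothendieck group; the remaining two steps are pure transport of structure, so the only genuine point of care is matching the conventions on residues and making sure $\iRes$, $\iInd$ are identified with $e_i$, $f_i$ in the correct order rather than the reverse.
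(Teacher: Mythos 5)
Your argument is correct and coincides with the paper's: the paper's "proof" is precisely the two displayed identities for $\iRes\cdot[\D_n(\la)]$ and $\iInd\cdot[\D_n(\la)]$ obtained from Corollary \ref{Cor iRes iInd on Weyl}, plus the observation that the Weyl-module classes form a basis, after which the $\wh{\Fsl}_e$-structure is transported from $\CF[\Bs]$ exactly as you describe. Your additional remarks (exactness of the block projections, integrability guaranteeing the Serre relations) just make explicit what the paper leaves implicit.
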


\remarks 
(\roi).  
The results in this section do not depend on the characteristic of the ground field $R$, 
namely depend only $e$ and the multi-charge $\Bs=(s_1,\dots,s_r)$. 
\\
(\roii). 
By the lifting arguments from the module categories of Ariki-Koike algebras as in 
\cite[Section 5]{S}, 
we obtain the $\wh{\Fsl}_e$-categorification in the sense of \cite{R2} in our setting. 

%%%%%%%%%%%%%%%%%%%%%%%%%%%%%%%%%%%%%%%%%%%%%%%%%%%%%%%%%%%%%%%%%%%%%%%%%%%%%%%%%%%%

%%%%%%%%%%%%%%%%%%%%%%%%%%%%%%%%%%%%%%%%%%%%%%%%%%%%%%%%%%%%%%%%%%%%%%%%%%%%%%%%%%%%

\section{Relations with category $\CO$ of rational Cherednik algebras} 
In this section, 
we assume that $R=\CC$. 
We give a relation between our induction and restriction functors for cyclotomic $q$-Schur algebras 
and parabolic induction and restriction functors for rational Cherednik algebras given in \cite{BE}. 

\para 
Let $\CH_{n,r}$ be the rational Cherednik algebra 
associated to $\FS_n \ltimes (\ZZ / r \ZZ)^n $ with the parameters $\Bc$ 
(see \cite{R} for definition and parameters $\Bc$), 
and 
$\CO_{n,r}$ be the category $\CO$ of $\CH_{n,r}$ defined in \cite{GGOR}. 
In \cite{GGOR}, 
they defined 
the KZ functor $\KZ_n : \CO_{n,r} \ra \He_{n,r} \cmod$. 
Then $\CO_{n,r}$ is the highest weight cover of $\He_{n,r} \cmod$ in the sense of \cite{R} 
through the KZ functor. 
In \cite{R}, 
Rouquier proved that 
$\CO_{n,r}$ is equivalent to 
$\Sc_{n,r} \cmod$ 
as highest weight covers of $\He_{n,r} \cmod$ 
under some conditions for parameters. 

\para 
Let $\OInd^{n+1}_n$ (resp. $\ORes^{n+1}_n$) 
be the parabolic induction (resp. restriction) functors 
between $\CO_{n,r}$ and $\CO_{n+1,r}$ 
defined in \cite{BE}. 
Then, we have the following theorem. 

\begin{thm}
\label{Thm iso Res Ind SCA RCA} 
Assume that 
$\CO_{n,r}$ (resp. $\CO_{n+1,r}$) 
is equivalent to 
$\Sc_{n,r} \cmod$ (resp. $\Sc_{n+1,r} \cmod$) 
as highest weight covers of $\He_{n,r} \cmod$ 
(resp. $\He_{n+1,r} \cmod$). 
Then, under these equivalences, 
we have the following isomorphisms of functors: 
\[ \ORes^{n+1}_n \cong \Res^{n+1}_n, \quad \OInd^{n+1}_n \cong \Ind^{n+1}_n .\]
\end{thm}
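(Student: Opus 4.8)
The plan is to verify that the functors $\Res^{n+1}_n$ and $\ORes^{n+1}_n$ (and dually $\Ind^{n+1}_n$ and $\OInd^{n+1}_n$) agree after transporting through the highest weight equivalences, by checking that both are compatible with the KZ/Schur functors down to the Hecke algebras and then invoking the rigidity of highest weight covers. The key structural input is Rouquier's theorem that a highest weight cover of $\He_{n,r}\cmod$ is determined up to equivalence by its underlying exact functor to $\He_{n,r}\cmod$, together with the fact (proved in \cite{BE}) that $\ORes^{n+1}_n$ intertwines the KZ functors with the Hecke-level restriction $\HRes^{n+1}_n$. On the $q$-Schur side, the analogous compatibility $\Om_n \circ \Res^{n+1}_n \cong \HRes^{n+1}_n \circ \Om_{n+1}$ is exactly Proposition \ref{Prop commute Om Res Phi Ind}(\roi) (equivalently Corollary \ref{Cor properties Res Ind}(\roiv)).

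First I would fix the two highest weight equivalences $\Psi_n : \CO_{n,r} \xrightarrow{\sim} \Sc_{n,r}\cmod$ and $\Psi_{n+1} : \CO_{n+1,r} \xrightarrow{\sim} \Sc_{n+1,r}\cmod$, which by definition of equivalence of highest weight covers satisfy $\Om_n \circ \Psi_n \cong \KZ_n$ and $\Om_{n+1}\circ \Psi_{n+1}\cong \KZ_{n+1}$. Consider the two functors $\Psi_n \circ \ORes^{n+1}_n \circ \Psi_{n+1}^{-1}$ and $\Res^{n+1}_n$ from $\Sc_{n+1,r}\cmod$ to $\Sc_{n,r}\cmod$. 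Both are exact (the former because $\ORes^{n+1}_n$ is exact by \cite{BE} and equivalences are exact; the latter by Corollary \ref{Cor properties Res Ind}(\roi)). Composing with $\Om_n$ and using the two compatibilities above, I get
\[
\Om_n \circ \Psi_n \circ \ORes^{n+1}_n \circ \Psi_{n+1}^{-1}
\cong \KZ_n \circ \ORes^{n+1}_n \circ \Psi_{n+1}^{-1}
\cong \HRes^{n+1}_n \circ \KZ_{n+1}\circ \Psi_{n+1}^{-1}
\cong \HRes^{n+1}_n \circ \Om_{n+1},
\]
and likewise $\Om_n \circ \Res^{n+1}_n \cong \HRes^{n+1}_n \circ \Om_{n+1}$. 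So the two candidate functors become isomorphic after applying $\Om_n$. Then I would upgrade this to an isomorphism of the functors themselves: this is a standard "faithfulness on morphisms" argument for highest weight covers — one uses that for a highest weight cover $\Om$ is fully faithful on objects with a standard (Weyl) filtration, so an isomorphism of exact functors landing in $\Sc_{n+1,r}\cmod^\D$ can be detected after composing with $\Om_n$; both $\Res^{n+1}_n$ and $\Psi_n\circ\ORes^{n+1}_n\circ\Psi_{n+1}^{-1}$ send $\D$-filtered modules to $\D$-filtered modules by Theorem \ref{Thm Res Ind Weyl}(\roi), and projective modules are $\D$-filtered, so the reconstruction can be carried out on projectives exactly as in the proof of Theorem \ref{Thm iso Ind coInd} via Lemma \ref{Lemma lift morphism of functors}. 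This yields $\ORes^{n+1}_n \cong \Res^{n+1}_n$ under the equivalences.

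For the induction functors, the cleanest route is adjunction: $\OInd^{n+1}_n$ is both left and right adjoint to $\ORes^{n+1}_n$ by \cite{BE}, while $\Ind^{n+1}_n$ is both left and right adjoint to $\Res^{n+1}_n$ by Corollary \ref{Cor properties Res Ind}(\roii) (which rests on Theorem \ref{Thm iso Ind coInd}). Since adjoints are unique up to isomorphism, transporting the already-established isomorphism $\ORes^{n+1}_n\cong\Res^{n+1}_n$ through the equivalences immediately gives $\OInd^{n+1}_n\cong\Ind^{n+1}_n$. The main obstacle I anticipate is the careful bookkeeping in the "detect isomorphism after applying $\Om_n$" step: one must make sure the natural transformation produced upstairs is the one coming from the adjunction/compatibility data rather than an abstract isomorphism, so that Lemma \ref{Lemma lift morphism of functors}(\roiii) applies cleanly; and one must confirm that Rouquier's equivalence really does intertwine KZ with the Schur functor $\Om$ on the nose (this is built into the definition of equivalence of highest weight covers, but should be cited explicitly, e.g. from \cite{R}). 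Everything else is formal once Proposition \ref{Prop commute Om Res Phi Ind} and the adjunctions are in hand.
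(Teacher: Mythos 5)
Your proposal is correct and follows essentially the same route as the paper: the same chain of isomorphisms $\Om_n\circ\Res^{n+1}_n\cong\HRes^{n+1}_n\circ\Om_{n+1}\cong\KZ_n\circ\ORes^{n+1}_n\circ(\text{equivalence})^{-1}$, followed by lifting the identification along the cover functor on projectives (the paper phrases this via $\Phi_n\circ\Om_n\cong\Id$ on projective objects from \cite{R} rather than full faithfulness on $\D$-filtered objects, but this is the same double-centralizer fact), extension to all modules via Lemma \ref{Lemma lift morphism of functors}, and uniqueness of adjoints for the induction functors.
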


\begin{proof}
Let 
$\Theta_n : \CO_{n,r} \ra \Sc_{n,r} \cmod$ 
be the functor giving the equivalence as highest weight covers of $\He_{n,r} \cmod$. 
Then, we have that 
$\KZ_n \cong \Om_n \circ \Theta_n$. 
Let $\Psi_n : \He_{n,r} \cmod \ra \CO_{n,r}$ 
be the right adjoint functor of $\KZ_n$. 
Then, 
we have that 
$\Phi_n \cong \Theta_n \circ \Psi_n$ 
by the uniqueness of the adjoint functors. 
It is similar for the equivalence $\Theta_{n+1} : \CO_{n+1,r} \ra \Sc_{n+1,r} \cmod$.   

By Corollary \ref{Cor properties Res Ind} (\roiv) and \cite[Theorem 2.1]{S}, 
we have that 
\begin{align}
\label{Om Res KZ Res}
\begin{split}
\Om_n \circ \Res^{n+1}_n 
& \cong 
\HRes^{n+1}_n \circ \Om_{n+1} 
\\
& \cong 
\HRes^{n+1}_n \circ \KZ_{n+1} \circ \Theta^{-1}_{n+1} 
\\
& \cong 
\KZ_n \circ \ORes^{n+1}_n \circ \Theta^{-1}_{n+1}.
\end{split} 
\end{align}
Recall that 
$I_{n+1} : \Sc_{n+1,r}\cproj \ra \Sc_{n+1,r} \cmod$ 
is the canonical embedding functor. 
Then, 
\eqref{Om Res KZ Res} together with the isomorphism 
$\Phi_n \cong \Theta_n \circ \Psi_n$ 
implies that 
\begin{align}
\label{Phi Om Res I Psi KZ Res I}
\Phi_n \circ \Om_n \circ \Res^{n+1}_n \circ I_{n+1} 
\cong 
\Theta_n \circ \Psi_n \circ \KZ_n  \circ \ORes^{n+1}_n \circ \Theta^{-1}_{n+1}  \circ I_{n+1}. 
\end{align}
Since $\Res^{n+1}_n$ (resp. $\ORes^{n+1}_n$) 
has the left and right adjoint functor $\Ind^{n+1}_n$ (resp. $\OInd^{n+1}_n$) 
by Corollary \ref{Cor properties Res Ind} (\roii) (resp. \cite[Proposition 2.9]{S}), 
$\Res^{n+1}_n$ (resp. $\ORes^{n+1}_n$) carries projectives to projectives. 
Moreover, 
$\Phi_n \circ \Om_n \cong \Id_n$ (resp. $ \Psi_n \circ \KZ_n \cong \Id_n$) 
on projective objects by properties of highest weight covers (see \cite[Proposition 4.33]{R}). 
Thus, \eqref{Phi Om Res I Psi KZ Res I} implies that 
\begin{align}
\label{Res I Res I}  
\Res^{n+1}_n \circ I_{n+1} 
\cong 
\Theta_n \circ \ORes^{n+1}_n \circ \Theta^{-1}_{n+1}  \circ I_{n+1}
\end{align}
Since both $\Res^{n+1}_n$ and $\Theta_n \circ \ORes^{n+1}_n \circ \Theta^{-1}_{n+1}$ 
are exact, 
\eqref{Res I Res I} together with 
Lemma \ref{Lemma lift morphism of functors} (\roii) 
implies that 
\[ \Res^{n+1}_n \cong \Theta_n \circ \ORes^{n+1}_n \circ \Theta^{-1}_{n+1}. \] 
By the uniqueness of adjoint functors (or by a similar arguments),  
we also have 
\[ \Ind^{n+1}_n \cong \Theta_{n+1} \circ \OInd^{n+1}_n \circ \Theta_{n}^-.\]
\end{proof}
% !!!!!!!!!!!!!!!!!!!!!!!!!!!!!!!!!!!!!!!!!!!!!!!!!!!!!!!!!!!!!!!!!!!!!!!!!!!!!!!!!!!!!!!!!!!!!!!!!!!!!!!!!!!!!!!!!!!!!!!!!!!!!!!!!!!!!!!!!!!!!!!!!!!!!!!!!!!!!!!!!!!!!!!!!!!!!!!!!!!!


\begin{thebibliography}{DJM10}


\bibitem[AM]{AM} 
S.~Ariki and A.~Mathas, 
\newblock The number of simple modules of the Hecke algebras of type $G(r,1,n)$, 
\newblock {\em Math. Z.} {\bf 233} (2000), 601-623. 


\bibitem[ASS]{ASS} 
I. Assem, D. Simson, A. Skowronski, 
\newblock Elements of the Representation Theory of Associative Algebras, 
London Math. Soc., Student Texts, {\bf 65}, 
Cambridge Univ. Press. 


\bibitem[BE]{BE} 
R.~ Bezrukavnikov and P.~Etingof, 
\newblock Parabolic induction and restriction functors for rational Cherednik algebras,  
\newblock {\em Sel. Math.} {\bf 14} (2009), 397-425. 




\bibitem[DJM]{DJM98}
R.~Dipper, G.~James, and A.~Mathas, 
\newblock Cyclotomic {$q$}-{S}chur algebras, 
\newblock {\em Math. Z.} {\bf 229} (1998), 385-416.

\bibitem[DM]{DM}
R.~Dipper and A.~Mathas,  
\newblock Morita equivalences of Ariki-Koike algebras, 
\newblock {\em Math. Z.}  {\bf 240} (2002), 579-610. 

\bibitem[D]{D-book}
S.~Donkin, 
\newblock The $q$-Schur algebra,  
\newblock London Math. Soc., Lecture Note series {\bf 253}, 
Cambridge Univ. Press. 


\bibitem[DR]{DR}
J.~Du and H.~Rui,  
\newblock  Borel type subalgebras of the $q$-Schur$^m$ algebra,  
\newblock {\em J. Algebra} {\bf 213} (1999), 567-595. 

\bibitem[GGOR]{GGOR}
V.~ Ginzburg, N.~Guay, E.~Opdam and R.~Rouquier, 
\newblock On the category $\CO$ for rational Cherednik algebras, 
\newblock {\em Invent. Math.} {\bf 154} (2003), 617-651. 

\bibitem[JMMO]{JMMO}
M.~Jimbo, K.~Misra, T.~Miwa and M.~Okado, 
\newblock Combinatorics of representations of $U_q (\wh{\Fsl}_n)$ at $q=0$, 
\newblock {\em Comm. Math. Phys.} {\bf 136} (1991), 543-566.

\bibitem[LM]{LM}
S.~Lyle and A.~Mathas, 
\newblock Blocks of cyclotomic Hecke algebras, 
\newblock {\em Adv. Math. } {\bf 216} (2007), 854-878.  

\bibitem[MM]{MM} 
G.~Malle and A.~Mathas, 
\newblock Symmetric cyclotomic Hecke algebras, 
\newblock{\em J. Algebra} {\bf 205} (1998), 275-293. 


\bibitem[M1]{Mat04}
A.~Matahs, 
\newblock The representation theory of the {A}riki-{K}oike and cyclotomic
  {$q$}-{S}chur algebras.
\newblock In \lq\lq {\em Representation theory of algebraic groups and quantum
  groups}", {\em Adv. Stud. Pure Math.} Vol. {\bf 40}, Math. Soc.
  Japan, Tokyo 2004, pp. 261-320.
  
\bibitem[M2]{Mat08}
A.~Mathas, 
\textit{Seminormal forms and {G}ram determinants for cellular algebras}, 
J. Reine Angew. Math. 
{\bf 619} 
(2008), 141--173.


\bibitem[R1]{R}
R.~Rouquier, 
\newblock{ $q$-Schur algebras and complex reflection groups}, 
{\em Moscow Math. J.} {\bf 8}, 119-158.

\bibitem[R2]{R2}
R.~Rouquier, 
\newblock{2-Kac-Moody algebras}, preprint, arXiv: 0812.5023.  

\bibitem[S]{S}
P.~Shan, 
\newblock 
Crystals of Fock spaces and cyclotomic rational double affine Hecke algebras, 
{\em Ann. Sci. Ec. Norm. Super.} {\bf 44} (2011), 147-182. 

\bibitem[SW]{SW} 
C.~Stroppel and B.~Webster, 
\newblock{Quiver Schur algebras and $q$-Fock space,} 
preprint, arXiv:1110.1115.


\bibitem[W]{W}
K.~Wada, 
\newblock{Presenting cyclotomic $q$-Schur algebras}, 
\newblock {\em Nagoya Math. J.} {\bf 201} (2011), 45-116.

\bibitem[W2]{W2}
K.~Wada, 
\newblock{On Weyl modules of cyclotomic $q$-Schur algebras}, 
to appear in Contemp. Math.
\end{thebibliography}
\end{document}